\DeclareMathOperator{\id}{id}
\DeclareMathOperator{\op}{op}
\DeclareMathOperator{\Mod}{\mathbf{Mod}}
\DeclareMathOperator{\Rep}{Rep}
\DeclareMathOperator{\fp}{fp}
\DeclareMathOperator{\Hom}{Hom}
\DeclareMathOperator{\End}{End}
\DeclareMathOperator{\Spec}{Spec}
\DeclareMathOperator{\Tor}{Tor}
\DeclareMathOperator{\reg}{reg}
\DeclareMathOperator{\flt}{flat}
\DeclareMathOperator{\rk}{rk}
\DeclareMathOperator{\Sym}{Sym}
\DeclareMathOperator{\ev}{ev}
\DeclareMathOperator{\iso}{iso}
\DeclareMathOperator{\LFdi}{\LF^{\rk d}_{\iso}}
\DeclareMathOperator{\Rex}{\mathbf{Rex}}
\DeclareMathOperator{\SymMonCat}{SymMonCat}
\DeclareMathOperator{\PsCone}{PsCone}
\DeclareMathOperator{\Ind}{Ind}
\DeclareMathOperator{\Aff}{\mathbf{Aff}}
\DeclareMathOperator{\QCoh}{\mathbf{QCoh}}
\DeclareMathOperator{\VB}{\mathbf{VB}}
\DeclareMathOperator{\BGL}{\mathrm{BGL}}
\DeclareMathOperator{\fpqc}{\mathit{fpqc}}
\DeclareMathOperator{\colim}{colim}
\DeclareMathOperator{\Lex}{\mathbf{Lex}}
\DeclareMathOperator{\Cat}{\mathbf{Cat}}
\DeclareMathOperator{\Gpd}{\mathbf{Gpd}}
\DeclareMathOperator{\CAT}{\mathbf{CAT}}
\DeclareMathOperator{\Fun}{\mathrm{Fun}}
\DeclareMathOperator{\LF}{\mathrm{LF}}
\newcommand{\ca}[1]{\mathscr{#1}}
\newcommand{\Prs}[1]{\mathcal{P}\ca{#1}}
\DeclareMathOperator{\U}{O}
\newcommand{\ubar}[1]{\underline{#1\mkern-4mu}\mkern4mu }
\newcommand{\ten}[1]{\mathop{{\otimes}_{#1}}}
\newcommand{\pb}[1]{\mathop{{\times}_{#1}}}
\newcommand{\po}[1]{\mathop{{+}_{#1}}}
\newcommand{\defl}{\mathrel{\mathop:}=}
\theoremstyle{plain}
\newtheorem{thm}{Theorem}[subsection]
\newtheorem{prop}[thm]{Proposition}
\newtheorem{lemma}[thm]{Lemma}
\newtheorem{cor}[thm]{Corollary}
\theoremstyle{definition}
\newtheorem{example}[thm]{Example}
\newtheorem{rmk}[thm]{Remark}
\newtheorem{dfn}[thm]{Definition}
\newtheoremstyle{citing}{}{}{\itshape}{}{\bfseries}{.}{ }{\thmnote{#3}}
\theoremstyle{citing}
\newtheoremstyle{citingdfn}{}{}{}{}{\bfseries}{.}{ }{\thmnote{#3}}
\theoremstyle{citingdfn}
\numberwithin{equation}{section}
\keywords{Adams stacks, descent, weakly Tannakian categories}
\subjclass[2010]{14A20, 16T05, 18D10}
\author{Daniel Sch\"appi}
\thanks{The author gratefully acknowledges support through the Swiss National Foundation Fellowship P2SKP2\_148479}
\address{School of Mathematics and Statistics, University of
  Sheffield, Sheffield, S3 7RH, UK}
\email{d.schaeppi@sheffield.ac.uk}
\title{Descent via Tannaka duality}
\begin{document}

\begin{abstract}
 Given a diagram of schemes, we can ask if a geometric object over one of them can be built from descent data (usually objects of the same type over the various other schemes in the diagram, together with compatibility isomorphisms). Using the language of moduli stacks, we can rephrase this as follows: saying that descent problems for a given diagram have essentially unique solutions amounts to saying that the diagram in question is a (bicategorical) colimit diagram in a certain 2-category of stacks.

 In this paper we use generalized Tannaka duality to explicitly compute certain colimits in the 2-category of Adams stacks. Using this we extend recent results of Bhatt from algebraic spaces to Adams stacks and a result of Hall--Rydh to non-noetherian rings.

 We conclude the paper with a global version of the Beauville--Laszlo theorem, which states that a large class of schemes and stacks can be built by gluing the open complement of an effective Cartier divisor with the infinitesimal neighbourhood of the divisor.
\end{abstract}

\maketitle

\tableofcontents

\section{Introduction}\label{section:introduction}

\subsection{Overview}
 Given a commutative diagram
\begin{equation}\label{eqn:generic_descent_diagram}
\vcenter{\xymatrix{ X \ar[r] \ar[d] & Z \ar[d] \\ Y \ar[r] & W} }
\end{equation}
 of schemes, we can ask if various objects over $W$ (for example, vector bundles, princiapl $G$-bundles, elliptic curves, etc.) can be built from objects of the same kind over $Y$ and $Z$, together with an isomorphism of their pullbacks to $X$. In this case, we say that the diagram satisfies descent for the given kind of objects (and \emph{effective} descent if the resulting object over $W$ is unique up to unique isomorphism).

 For example, Milnor \cite[\S 2]{MILNOR} has shown that the diagram
\begin{equation}\label{eqn:milnor_descent}
\vcenter{ \xymatrix{\Spec(A) \ar[r] \ar[d] & \Spec(C) \ar[d] \\ \Spec(B) \ar[r] & \Spec(B \pb{A} C)}}
\end{equation}
 satisfies effective descent for vector bundles if one of the morphisms $C \rightarrow A$ or $B \rightarrow A$ is surjective. Another example comes from the Beauville--Laszlo theorem \cite{BEAUVILLE_LASZLO}, which states that the diagram
\begin{equation}\label{eqn:beauville_laszlo_descent}
\vcenter{ \xymatrix{ \Spec( A_{f} \ten{A} \widehat{A}) \ar[r] \ar[d] & \Spec(\widehat{A}) \ar[d] \\ \Spec(A_f) \ar[r] & \Spec(A)}}
\end{equation}
 satisfies effective descent for vector bundles if $f \in A$ is a non-zero divisor, $A_f$ is the localization of $A$ at $f$, and $\widehat{A} =\lim_n A \slash (f^n)$ is the completion of $A$ at the ideal $(f)$.

 Note that neither of these results follow directly from the fact that vector bundles satisfy faithfully flat descent: none of the morphisms in \eqref{eqn:milnor_descent} are assumed to be flat, and in \eqref{eqn:beauville_laszlo_descent}, the morphism $A \rightarrow \widehat{A}$ is not flat in general unless $A$ is noetherian. Even in the noetherian case, the fact that vector bundles satisfy faithfully flat descent only implies that a descent datum over the truncated simplicial diagram
\[
 \xymatrix{ \Spec\bigl((A_f \times \widehat{A})^{\otimes 3}\bigr) \ar@<1ex>[r] \ar[r] \ar@<-1ex>[r] & \Spec\bigl((A_f \times \widehat{A}) \otimes (A_f \times \widehat{A})\bigr) \ar@<0.75ex>[r] \ar@<-0.75ex>[r] & \ar[l] \Spec(A_f \times \widehat{A})}
\]
 gives rise to an essentially unique vector bundle over $\Spec(A)$. The fact that the datum of an isomorphism over the ring $\widehat{A} \ten{A} \widehat{A}$ is redundant requires an additional argument.

 One of our aims is to extend the above descent results from vector bundles to more general geometric objects, for example, principal $G$-bundles for certain flat affine group schemes $G$, elliptic curves, and formal groups. These are classified by the moduli stacks $BG$, $M_{\mathrm{ell}}$, and $M_{\mathrm{FG}}$ respectively (the latter is not an Artin stack since it is not finite dimensional: specifying a formal group law requires specifying infinitely many coefficients of a power series). These are all stacks for the $\fpqc$-topology, so the objects they classify satisfy faithfully flat descent, but as we just observed above, this does not imply the desired results directly.

 Using certain 2-categories of stacks, we can rephrase the descent problem for Diagram~\eqref{eqn:generic_descent_diagram}. To say that Diagram~\eqref{eqn:generic_descent_diagram} satisfies effective descent for $M$-objects for some stacks $M$ (that is, for morphisms to $M$) amounts to saying that the canonical functor
\[
 \Hom(W,M) \rightarrow \Hom(Y,M) \pb{\Hom(X,M)} \Hom(Z,M)
\]
 is an equivalence of groupoids. Thus if $M$ and all of $X$, $Y$, $Z$, and $W$ are objects of the same fixed 2-category of stacks, this is equivalent to saying that \eqref{eqn:generic_descent_diagram} is a bicategorical pushout diagram.

 We will use some of the results about colimits of Adams stacks established using generalized Tannaka duality in \cite{SCHAEPPI_COLIMITS} to explicitly compute certain colimit diagrams in the 2-category of Adams stacks (and thereby establish the corresponding descent results mentioned above for these diagrams).

 An \emph{Adams stack} is a stack on the $\fpqc$-site $\Aff_R$ which is quasi-compact, has an affine diagonal, and which has the \emph{resolution property}: the vector bundles form a generator of the category of quasi-coherent sheaves. The name comes from the fact that these are precisely the stacks associated to Adams Hopf algebroids (see \cite[Theorem~1.3.1]{SCHAEPPI_STACKS}). Note that these need not be Artin stacks since the source and target morphism of the corresponding affine groupoid need not be finitely presentable. Thomason \cite[\S 2]{THOMASON} has shown that many global quotient stacks $X \slash G$ are Adams stacks. The schemes which are also Adams stacks are precisely the quasi-compact semi-separated schemes with the resolution property. This includes for example all quasi-projective schemes \cite[\S 5.3]{EGAII} and all separated algebraic surfaces \cite{GROSS}.

 The main result of \cite{SCHAEPPI_COLIMITS} shows that the 2-category of Adams stacks has all bicategorical colimits, but the construction of these colimits is rather indirect. The result is established using the generalized Tannaka duality results from \cite{SCHAEPPI_STACKS, SCHAEPPI_INDABELIAN, SCHAEPPI_GEOMETRIC} which show that the 2-category of Adams stacks is contravariantly equivalent to the 2-category of weakly Tannakian categories. Our first aim is to carefully analyze this construction in order to facilitate the recognition of colimit diagrams of Adams stacks. The general result of this kind is Theorem~\ref{thm:colimit_recognition}. The following theorem is a useful special case.

 Given two functors $F \colon \ca{A} \rightarrow \ca{C}$ and $G \colon \ca{B} \rightarrow \ca{C}$ between small categories, the bicategorical pullback\footnote{This particular construction of a bicategorical pullback is also known as \emph{iso-comma-category}.} of $F$ along $G$ (in the category of small categories) is given by the category $\ca{A} \pb{\ca{C}} \ca{B}$ whose objects are triples $(A,f,B)$ where $A \in \ca{A}$, $B \in \ca{B}$, and $f \colon FA \rightarrow GB$ is an isomorphism. The morphisms $(A,f,B) \rightarrow (A^{\prime}, f^{\prime}, B^{\prime})$ in $\ca{A} \pb{\ca{C}} \ca{B}$ are pairs of morphisms $\alpha \colon A \rightarrow A^{\prime}$ in $\ca{A}$ and $\beta \colon B \rightarrow B^{\prime}$ in $\ca{B}$ making the evident square commutative.

 For an Adams stack $X$ we write $\VB^c(X)$ for the category of vector bundles of constant rank on $X$.

\begin{thm}\label{thm:pushout_recognition}
 Let
\[
 \xymatrix{X \ar[r] \ar[d] \xtwocell[1,1]{}\omit{\varphi} & Z \ar[d]^{g} \\ Y \ar[r]_-{f} & W}
\]
 be a diagram of Adams stacks over $R$ which commutes up to isomorpism $\varphi$. Then $(f,\varphi,g)$ exhibits $W$ as bicategorical pushout in the 2-category of Adams stacks if and only if the following two conditions hold:
\begin{enumerate}
 \item[(i)] The functor
\[
 \VB^c(W) \rightarrow \VB^c(Y) \pb{\VB^c(X)} \VB^c(Z)
\]
 induced by $(f^{\ast},\varphi^{\ast},g^{\ast})$ is an equivalence of categories;
\item[(ii)] If $M \in \QCoh_{\fp}(W)$ satisfies $f^\ast M \cong 0$ and $g^{\ast} M \cong 0$, then $M \cong 0$.
\end{enumerate}
\end{thm}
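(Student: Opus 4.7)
The plan is to derive the theorem from the general colimit recognition result Theorem~\ref{thm:colimit_recognition}, which rests on the contravariant 2-equivalence between Adams stacks and weakly Tannakian categories supplied by generalized Tannaka duality (sending $X$ to $\VB^{c}(X)$). Under this equivalence, a bicategorical pushout of Adams stacks corresponds to a bicategorical pullback in the 2-category of weakly Tannakian categories, and the task reduces to recognizing such a pullback in the concrete terms of (i) and (ii).

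For the \emph{only if} direction, assume the square is a bicategorical pushout. Then $\VB^{c}(W)$ is the bicategorical pullback of $\VB^{c}(Y) \rightarrow \VB^{c}(X) \leftarrow \VB^{c}(Z)$ in the 2-category of weakly Tannakian categories. The forgetful 2-functor to $R$-linear symmetric monoidal categories preserves this limit and returns the iso-comma construction, which gives condition (i). Condition (ii) then reflects the fact that the 2-equivalence forces the combined pullback functor $\QCoh_{\fp}(W) \rightarrow \QCoh_{\fp}(Y) \pb{\QCoh_{\fp}(X)} \QCoh_{\fp}(Z)$ to be conservative: any nonzero $M$ in the joint kernel of $f^{\ast}$ and $g^{\ast}$ would map to zero in the pullback, contradicting the reconstruction half of generalized Tannaka duality applied to $W$.

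For the \emph{if} direction — the main content — conditions (i) and (ii) together must be shown to imply the hypotheses of Theorem~\ref{thm:colimit_recognition}. Condition (i) realizes the underlying category of the candidate pullback of weakly Tannakian categories as $\VB^{c}(W)$, with symmetric monoidal and rigidity structure inherited pointwise from $Y$ and $Z$. The critical input from (ii) is that the restriction functors along $f$ and $g$ are jointly conservative on $\QCoh_{\fp}(W)$; via Ind-completion and the resolution property (which presents $\QCoh(W)$ as the Ind-category of objects built out of $\VB^{c}(W)$), this ensures that no finitely presentable quasi-coherent sheaf on $W$ is missed by the iso-comma of vector bundles, so that the weakly Tannakian envelope of the iso-comma category returns exactly $\QCoh(W)$.

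The main obstacle I expect is this last step: matching the iso-comma construction on $\VB^{c}$, which computes the pullback merely at the level of symmetric monoidal $R$-linear categories, with the bicategorical pullback inside the 2-category of weakly Tannakian categories. The Tannakian structure involves fiber-functor, flatness, and generation data that must be shown to descend from the two sides under hypotheses (i) and (ii), and this is precisely the checklist encoded by the general recognition theorem, which will have to be invoked carefully to complete the argument.
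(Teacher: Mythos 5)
Your overall strategy---specializing Theorem~\ref{thm:colimit_recognition} to the pushout indexing category---is exactly the route the paper takes, and the paper's own proof is essentially a three-line reduction. However, your elaboration of that reduction contains two genuine problems. First, and most importantly, you never address the mismatch between condition~(ii) of Theorem~\ref{thm:colimit_recognition} (the functors $f_i^{\ast}$ for $i \in \ca{I}_0$ \emph{jointly detect epimorphisms} in $\QCoh_{\fp}$) and condition~(ii) of the present theorem ($f^{\ast}$ and $g^{\ast}$ \emph{jointly detect non-zero objects}). These are not the same statement, and bridging them is the one substantive step in the specialization; the paper does it via Lemma~\ref{lemma:epi_detection_in_lfp_abelian_cats}, which shows that for a cocontinuous functor between lfp abelian categories preserving finitely presentable objects, detecting epimorphisms between finitely presentable objects is equivalent to reflecting the vanishing of finitely presentable objects. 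Without this translation your reduction to the general recognition theorem is incomplete.

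Second, your justification of condition~(i) in the ``only if'' direction rests on a false premise. The Tannaka duality sends $X$ to $\QCoh_{\fp}(X)$, not to $\VB^c(X)$ (the latter is not weakly Tannakian, lacking cokernels), and the inclusion of weakly Tannakian categories into $\ca{RM}$ does \emph{not} preserve bicategorical limits---that is precisely why the coreflection $T(-)$ is needed. If the forgetful functor preserved the limit as you claim, one would conclude that $\QCoh_{\fp}(W)$ is equivalent to the iso-comma category of the $\QCoh_{\fp}(X_i)$, which is false in general. The correct statement, that the comparison $\VB^c(W) \rightarrow \lim \VB^c(X_i)$ is an equivalence, is Corollary~\ref{cor:vector_bundles_on_colimits}; it requires Proposition~\ref{prop:locally_free_in_limit} (using connectedness of the indexing category and non-triviality of the vertices) together with the fact that $\LF^c_{(-)}$ sends the coreflection counit $T(\lim \QCoh_{\fp}(X_i)) \rightarrow \lim \QCoh_{\fp}(X_i)$ to an equivalence. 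Since this corollary is already packaged into condition~(i) of Theorem~\ref{thm:colimit_recognition}, you do not need to re-derive it---but the argument you sketch in its place is not correct and should be replaced by a citation of that result (together with the observation, needed for the general theorem, that $\{0,2\}$ is an isomorphism-detecting set for the pullback indexing category).
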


 \begin{rmk}
 On the level of objects, one direction of (i) follows from the fact that the Adams stack $\BGL_d$ is a  classifying stack for rank $d$ vector bundles. To give a rank $d$ vector bundle on the pushout $Y \po{X} Z$ amounts to giving a morphism of stacks $Y \po{X} Z \rightarrow \BGL_d$. By the universal property, this corresponds to giving vector bundles of rank $d$ on $Y$ and on $Z$, together with an isomorphism of their pullbacks to $X$. Note that we cannot use a similarly simple argument to identify general morphisms between vector bundles, since we are only dealing with stacks in groupoids (which can therefore only detect \emph{isomorphisms} between vector bundles, not arbitrary morphisms).
 \end{rmk}

 Using this result and some of its variations, we show that \eqref{eqn:milnor_descent} and \eqref{eqn:beauville_laszlo_descent} are bicategorical pushout diagrams in the 2-category of Adams stacks (see Theorem~\ref{thm:affine_pushout} respectively Theorem~\ref{thm:tubular}). Note that Ferrand \cite{FERRAND} showed that \eqref{eqn:milnor_descent} is a pushout diagram in the category of schemes and Bhatt \cite{BHATT} showed that \eqref{eqn:beauville_laszlo_descent} is a pushout diagram in the category of quasi-compact quasi-separated algebraic spaces. Neither of these statements follows directly from our result since we assume that all the objects in our category satisfy the resolution property.

 \subsection{Affine coproducts and affine completions}\label{section:introduction_applications}

 In \cite{BHATT}, Bhatt used a derived verion of Tannaka duality due to Lurie to compute various colimits in the category of quasi-compact quasi-separated algebraic spaces. Using the results of \cite{SCHAEPPI_COLIMITS} (which in turn rely on the non-dervied Tannaka duality results of \cite{SCHAEPPI_STACKS, SCHAEPPI_INDABELIAN, SCHAEPPI_GEOMETRIC}), we are able to show that some of these diagrams are also colimit diagrams in the 2-category of Adams stacks.

 Note that our results cannot be proven directly using dervied Tannaka duality. As pointed out by Hall and Rydh in the introduction of \cite{HALL_RYDH_TANNAKA}, the difficulty lies in the fact that \cite[Lemma~2.6]{BHATT} does not extend from algebraic spaces to stacks. In particular, not every tensor functor $D(B \mathbb{G}_m) \rightarrow D(k)$ of symmetric monoidal $(\infty,1)$-categories is induced by a morphism of stacks.

 Bhatt showed that for any quasi-compact quasi-separated algebraic space and any collection $(A_i)_{i \in I}$ of commutative rings, the canonical morphism
\[
X(\textstyle\prod\nolimits_{i \in I} A_i) \rightarrow \textstyle\prod\nolimits_{i \in I} X(A_i) 
\]
 is a bijection. As \cite[Example~8.3]{BHATT} shows, the same is \emph{not} true for the Adams stack $\mathrm{B} \mathbb{G}_m$. The reason for this is the existence of rings $A_i$ and invertible $A_i$-modules $L_i$ such that the minimal number of generators of the $L_i$ grows arbitrarily large. As the following result shows, this is in some sense the only problem that can arise in the context of Adams stacks.

 Let $X$ be an Adams stack over $R$, and let $(A_i)_{i \in I}$ be a family of commutative $R$-algebras. If there exist constants $\mu_d$, $d \in \mathbb{N}$, such that all locally free $A_i$-modules of rank $d$ can be generated by at most $\mu_d$ elements. In Theorem~\ref{thm:infinite_affine_coproducts} we will show that the canonical functor
\[
 X(\textstyle\prod\nolimits_{i \in I} A_i) \rightarrow \textstyle\prod\nolimits_{i \in I} X(A_i)
\]
 is an equivalence of categories under this assumption. Thus $\Spec(\prod_{i \in I} A_i)$ is the coproduct of the $\Spec(A_i)$ in the 2-category of Adams stacks. The condition is for example satisfied if all the $A_i$ are local rings, or if all the $A_i$ are noetherian rings of dimension at most $n$. Indeed, in the latter case, the minimal number of generators of a locally free $A_i$-module of rank $d$ is bounded by $d+n$  by the Forster--Swan Theorem.

 Our second example concerns completions. Let $A$ be an $R$-algebra which is $I$-adically complete for some ideal $I \subseteq A$ and let $X$ be an Adams stack over $R$. In Corollary~\ref{cor:adic_ring} we will show that the natural functor
\[
 X(A) \rightarrow \lim X(A \slash I^n)
\]
 is an equivalence of groupoids. In other words, this says that the canonical morphisms $\Spec(A \slash I^n) \rightarrow \Spec(A)$ exhibit $\Spec(A)$ as colimit of the chain
\[
 \xymatrix{ \ldots \ar[r] & \Spec(A \slash I^n) \ar[r] & \Spec(A \slash I^{n+1}) \ar[r] & \ldots }
\]
 in the 2-category of Adams stacks.

 This result was proved for all quasi-compact and quasi-separated algebraic spaces in \cite{BHATT}, and for all quasi-compact and quasi-separated Artin stacks and \emph{noetherian} rings $A$ in \cite{HALL_RYDH_TANNAKA}. As a corollary, we can extend \cite[Corollaries~1.2 and 4.4]{BHATT} on arc spaces to Adams stacks. Note that this does not follow from the result of \cite{HALL_RYDH_TANNAKA} since $A \llbracket x \rrbracket$ need not be noetherian for general $R$-algebras $A$.

\subsection{The global version of the Beauville--Laszlo theorem}

 We conclude our paper with the following global version of the Beauville--Laszlo theorem. Let $X$ be an Adams stack (for example, a quasi-compact semi-separated scheme with the resolution property) and let $Z \subseteq X$ be an effective Cartier divisor with open complement $U = X \setminus Z$ and sheaf of ideals $I \subseteq \U_X$. Let $\widehat{\U}_X \defl \lim_n \U_X \slash I^n$ be the completion of $\U_X$ at $I$ (computed in the category $\QCoh(X)$ of quasi-coherent sheaves on $X$, \emph{not} in the bigger category of $\U_X$-modules). We write $\widehat{Z}$ for the ``infinitesimal neighbourhood'' $\widehat{Z} \defl \Spec_X (\widehat{\U}_X)$ of $Z$ in $X$. In \S \ref{section:beauville_laszlo}, we will show that the diagram
\[
 \xymatrix{ U \pb{X} \widehat{Z} \ar[r] \ar[d] & \widehat{Z} \ar[d] \\ U \ar[r] & X}
\]
 is a (bicategorical) pushout diagram in the 2-category of Adams stacks.

 In order to prove this we need to check that the above diagram satisfies effective descent for vector bundles and their morphisms. Unfortunately, this cannot be deduced from the local version (where $X=\Spec(A)$ is an affine scheme and $Z$ is the closed subscheme defined by a non-zero divisor $f \in A$). The problem is that restriction to an open subscheme of $X$ does \emph{not} commute with the infinite limit used in the definiton of the completion $\widehat{\U}_X$. This problem is also discussed in \cite{BASSAT_TEMKIN}. As we will see in \S \ref{section:beauville_laszlo}, this is nevertheless true if we are working with Adams stacks (or, slightly more generally, with certain schemes that have enough flat objects, see Corollary~\ref{cor:global_beauville_laszlo}). 

 As pointed out in \cite[\S 5.2]{BASSAT_TEMKIN}, the ``infinitesimal neighbourhood'' as defined above can sometimes be quite large. Ben-Bassat and Temkin improve this situation by using formal schemes and Berkovich spaces, but this leaves the realm of Adams stacks.

\section*{Acknowledgments}
 I am very grateful for helpful discussions with Tom Bridgeland and I want to thank Oren Ben-Bassat for explaining some aspects of his work to me.

 This paper was written at the University of Sheffield during a stay made possible by the Swiss National Foundation Fellowship P2SKP2\_148479.
\section{Categorical background}\label{section:background} 

\subsection{Generalized Tannaka duality}\label{section:background_tannaka}
 Throughout we will need to use the language of 2-categories. We refer the reader to \cite{LACK_COMPANION} for the basic theory of 2-categories.

 In \cite{SCHAEPPI_COLIMITS}, the generalized Tannakian formalism of \cite{SCHAEPPI_STACKS, SCHAEPPI_INDABELIAN, SCHAEPPI_GEOMETRIC} was used to show that the 2-category $\ca{AS}$ of Adams stacks over a fixed commutative ring $R$ has all bicategorical colimits.

 A \emph{weakly Tannakian category} over $R$ is an $R$-linear finitely cocomplete\footnote{An $R$-linear category is finitely cocomplete if it has finite direct sums and cokernels.} symmetric monoidal category $\ca{A}$ whose category $\Ind(\ca{A})$ of ind-objects is abelian, which is generated by objects with duals, and which admits a \emph{fiber functor} (or \emph{covering}): there exists a faithful, flat, and right exact symmetric strong monoidal functor
\[
 w \colon \ca{A} \rightarrow \Mod_B
\]
 for some commutative $R$-algebra $B$ (see \cite[Definition~1.5]{SCHAEPPI_INDABELIAN}).

 Recall that a finitely cocomplete symmetric monoidal $R$-linear category $\ca{A}$ is called \emph{right exact symmetric monoidal} if the functor $A \otimes - \colon \ca{A} \rightarrow \ca{A}$ is right exact for all $A \in \ca{A}$. We denote the 2-category of right exact symmetric monoidal categories and right exact symmetric strong monoidal functors between them by $\ca{RM}$. The generalized Tannaka duality result we will use states that the pseudofunctor
\[
 \QCoh_{\fp}(-) \colon \ca{AS}^{\op} \rightarrow \ca{RM}
\]
 induces a biequivalence of 2-categories between $\ca{AS}^{\op}$ and the full sub-2-category of $\ca{RM}$ of weakly Tannakian categories. Indeed, the pseudofunctor is 2-fully faithful by \cite[Theorem~1.3.3]{SCHAEPPI_STACKS}, and by \cite[Theorem~1.6]{SCHAEPPI_INDABELIAN}, $\ca{A} \in \ca{RM}$ is weakly Tannakian if and only if it is equivalent to $\QCoh_{\fp}(X)$ for some Adams stack $X$.

 Computing colimits of Adams stacks therefore amounts to computing limits of weakly Tannakian categories. The main result of \cite{SCHAEPPI_COLIMITS} shows that the inclusion
\[
 \{ \text{weakly Tannakian categories} \} \rightarrow \ca{RM}
\]
 has a right biadjoint $T$ which associates a \emph{universal} weakly Tannakian category $T(\ca{A})$ to any $\ca{A} \in \ca{RM}$. Thus, to compute the limit of a diagram of weakly Tannakian categories, we need to proceed in two steps: first we need to compute its limit in $\ca{RM}$ and then we need to find the universal weakly Tannakian category associated to the resulting object of $\ca{RM}$. To compute such limits explicitly, we therefore need to get a good understanding of these two constructions: we review bicategorical limits of categories in \S \ref{section:background_limits} and we review universal weakly Tannakian categories in \S \ref{section:background_universal}.

\subsection{Bicategorical limits and colimits}\label{section:background_limits}

 Let $\ca{I}$ be a small category (usually \emph{not} $R$-linear), and let $\ca{K}$ be a (strict) 2-category (we could also work with bicategories, but all the examples we consider are strict 2-categories). As is customary, we refer to objects, morphisms, and morphisms between morphisms in $\ca{K}$ as 0-cells, 1-cells, respectively 2-cells. A good example to keep in mind is the 2-category of categories, where the 2-cells are given by natural transformations. From this we can form further examples by equipping categories with various algebraic structure (for example, we can consider categories which admit a certain type of colimits, or symmetric monoidal categories).

A \emph{diagram} of shape $\ca{I}$ in $\ca{K}$ is a pseudofunctor $D \colon \ca{I} \rightarrow \ca{K}$ (this means that composition and identities need not be preserved strictly, only up to coherent invertible 2-cells). A \emph{pseudocone} with vertex $K \in \ca{K}$ is a pseudonatural transformation from the constant pseudofunctor with value $K$ to $D$. Unraveling the definition, this amounts to giving a 1-cell $k_i \colon K \rightarrow D_i$ for each $i \in \ca{I}$ and an invertible 2-cell
\[
 \xymatrix{& D_i \ar[rd]^{D_{\varphi}} \\ K \rrtwocell\omit{<-3>\quad\kappa_{\varphi}}   \ar[ru]^{k_i} \ar[rr]_-{k_j} && D_j}
\]
 for each morphism $\varphi \colon i \rightarrow j$ in $\ca{I}$, subject to compatibility conditions with the 2-cells of the pseudofunctor $D$. A morphism of pseudocones $(k_i,\kappa_{\varphi}) \rightarrow (k^{\prime}_i,\kappa^{\prime}_{\varphi})$ is a \emph{modification}, that is, a collection of 2-cells $\theta_i \colon k_i \rightarrow k^{\prime}_i$ compatible with $\kappa_{\varphi}$ and $\kappa^{\prime}_{\varphi}$ for all morphisms $\varphi$ in $\ca{I}$. We denote the category of pseudocones of the diagram $D$ with vertex $K$ by $\PsCone(K,D)$.

 We say that a pseudocone $(\ell_i,\lambda_\varphi)$ with vertex $L$ exhibits $L$ as \emph{bicategorical limit} of $D$ if the functor
\[
 \ca{K}(K,L) \rightarrow \PsCone(K,D) 
\]
 which sends a 1-cell $f \colon K \rightarrow L$ to the pseudocone $(f\ell_i,f\lambda_{\varphi})$ is an equivalence of categories for all objects $K \in \ca{K}$. A \emph{bicategorical colimit} of $D$ is a bicategorical limit of $D$ in the opposite 2-category $\ca{K}^{\op}$ of $\ca{K}$ (that is, the 2-category obtained by reversing the directions of the 1-cells of $\ca{K}$). A Yoneda argument shows that bicategorical limits and colimits are unique up to essentially unique equivalence.

 The data involved in the definition of a diagram and a pseudocone simplify considerably if $\ca{I}$ is the free category on a graph, for example, the indexing category for a pullback or a countable chain. To give a diagram on such a category $\ca{I}$ simply amounts to picking an object $D_i$ for all $i \in \ca{I}$ and a 1-cell $D_{\varphi} \colon D_i \rightarrow D_j$ for each indecomposeable morphism $\varphi \colon i \rightarrow j$ in $\ca{I}$ (without any further data and axioms required). A pseudocone on such a diagram with vertex $K$ consists of a collection of 1-cells $k_i \colon K \rightarrow D_i$ and invertible 2-cells $\kappa_{\varphi}$ as above for each indecomposeable morphisms $\varphi$ (not subject to any conditions). This simplifies the computation of bicategorical limits of this shape, as the following examples shows.

 Let $\ca{K}=\Cat$ be the 2-category of small categories, functors, and natural transformations between them. From the definition of bicategorical limits and the 2-natural isomorphism $\Cat(\ast, \ca{A}) \cong \ca{A}$ it follows that the bicategorical limit of a diagram $D \colon \ca{I} \rightarrow \Cat$ must be equivalent to $\PsCone(\ast,D)$ (if it exists), and one can check that $\PsCone(\ast,D)$ is indeed a bicategorical limit.

 Using the above remark about indexing categories that are free on a graph, we get the following description of the bicategorical limit $\lim \ca{A}_i$ of a sequence
\[
 \xymatrix{ \ldots \ar[r]^-{F_n} & \ca{A}_n \ar[r]^-{F_{n-1}} & \ldots \ar[r]^-{F_1} & \ca{A}_1 \ar[r]^-{F_0} & \ca{A}_0 }
\]
 of functors. The objects of $\lim \ca{A}_i$ consist of collections $(A_i,\kappa_i)$ where $A_i \in \ca{A}_i$ and $\kappa_i \colon F_{i} A_{i+1} \rightarrow A_{i}$ are isomorphisms. The morphisms $(A_i, \kappa_i) \rightarrow (A^{\prime}_i,\kappa^{\prime}_i)$ are collections of morphisms $\theta_i \colon A_i \rightarrow A^{\prime}_i$ which are compatible with the $\kappa_i$.

 Similarly we find that the bicategorical pullback of a diagram
\[
 \xymatrix{\ca{A} \ar[r]^-{F} & \ca{C} & \ar[l]_-{G} \ca{B}}
\]
 in $\Cat$ has objects the quintuples $(A,f,C,g,B)$ where $A \in \ca{A}$, $B \in \ca{B}$, $C \in \ca{C}$, and $f \colon FA \rightarrow C$ and $g \colon GB \rightarrow C$ are isomorphisms. The morphisms are given by triples of morphisms in $\ca{A}$, $\ca{B}$, and $\ca{C}$ which are compatible with these isomorphisms. Using this we can check that the functor from the bicategorical pullback to the iso-comma category $\ca{A} \pb{\ca{C}} \ca{B}$ which sends $(A,f,C,g,B)$ to $(A,g^{-1} \circ f,B)$ is an equivalence of categories. This shows that $\ca{A}\pb{\ca{C}} \ca{B}$ is also a bicategorical pullback of the above diagram.

 In categories of sets equipped with algebraic structure (groups, rings, modules, etc.), limits can always be constructed as follows: one first forms the limit of the diagram of underlying sets and then uses the universal property to endow this limit with a unique compatible algebraic structure. The same works for bicategorical limits in 2-categories of categories endowed with ``algebraic'' structure, for example, $R$-linear categories, $R$-linear categories with colimits of certain shapes, and symmetric monoidal categories. In each of these cases, the required structure can be written down quite explicitly. A more economical way is to systematically study this phenomenon of universal 2-dimensional algebra. This is known as 2-dimensional monad theory, which has been extensively studied (see \cite{BLACKWELL_KELLY_POWER}, \cite[\S 4]{LACK_COMPANION}, and the many references in \cite[\S 4.4]{LACK_COMPANION}).

\subsection{Tensor categories and base change}\label{section:background_tensor}
 A \emph{tensor category} over $R$ is a cocomplete\footnote{A category is called cocomplete if it has all small colimits.} symmetric monoidal closed $R$-linear category $(\ca{C},\otimes, \U)$. It is called \emph{locally finitely presentable} (lfp for short) if it has a strong generating set consisting of finitely presentable objects (that is, objects $A \in \ca{C}$ such that $\ca{C}(A,-) \colon \ca{C} \rightarrow \Mod_R$ preserves filtered colimits) and finitely presentable objects are closed under finite tensor products. This means in particular that the unit object $\U$ is finitely presentable.

 A \emph{tensor functor} $F \colon \ca{C} \rightarrow \ca{D}$ is a symmetric strong monoidal $R$-linear left adjoint functor. Note that to each lfp tensor category $\ca{C}$ we can associate the right exact symmetric monoidal category $\ca{C}_{\fp}$ of finitely presentable objects in $\ca{C}$, and conversely, to each $\ca{A} \in \ca{RM}$ we can associate the lfp tensor category $\Ind(\ca{A})$. However, while the latter is functorial, the former is in general not: the reason is that tensor functors need not preserve finitely presentable objects. This problem does not arise if $\ca{C}$ is generated by objects with duals. Indeed, any tensor functor preserves duals, and in an lfp tensor category, every object with a dual is finitely presentable.

 The 2-category of weakly Tannakian categories is therefore equivalent to the full sub-2-category of tensor categories consisting of \emph{geometric} tensor categories, that is, tensor categories $\ca{C}$ which are abelian, generated by duals, and which admit a faithful and exact tensor functor
\[
 \ca{C} \rightarrow \Mod_B
\]
 for some commutative $R$-algebra $B$. Since these tensor functors correspond to $\fpqc$-atlases $\Spec(B) \rightarrow X$ of the corresponding Adams stack, we call such tensor functors \emph{coverings}.

 The advantage of working with lfp tensor categories is the large supply of commutative algebra objects that exist in $\ca{C}=\Ind(\ca{A})$ (but rarely in the subcategory $\ca{A}$ of finitely presentable objects).

 Given a commutative algebra $B \in \ca{C}$, we denote the category of $B$-modules in $\ca{C}$ by $\ca{C}_B$. We write
\[
 (-)_B \colon \ca{C} \rightleftarrows \ca{C}_B \colon U 
\]
 for the base change functor $C_B \defl B \otimes C$ (that is, $C_B$ is the free $B$-module on $C$, and $U$ denotes the forgetful functor which sends a $B$-module to its underlying object in $\ca{C}$). Note that $\ca{C}_B$ is a tensor category since $B$ is commutative and that $(-)_B$ is a tensor functor. The tensor category $\ca{C}_B$ is lfp (respectively abelian) if $\ca{C}$ is lfp (respectively abelian).

 A commutative algebra $B \in \ca{C}$ is called \emph{faithfully flat} if $(-)_B$ is conservative (it reflects isomorphisms) and it preserves finite limits.

 The two central concepts used in the definition of universal weakly Tannakian categories are that of a locally free object of constant rank and that of a locally split epimorphism.

\begin{dfn}\label{dfn:locally_free_locally_split_summary}
 Let $\ca{C}$ be a tensor category over $R$. An object $V \in \ca{C}$ is called \emph{locally free of rank $d \in \mathbb{N}$} if there exists a faithfully flat algebra $B \in \ca{C}$ such that there is an isomorphism $V_B \cong B^{\oplus d}$ in the category of $B$-modules. We denote the category of locally free objects of rank $d$ by $\LF^{\rk d}$ and the groupoid of such objects and isomorphisms between them by $\LFdi(\ca{C})$. The full subcategory of locally free objects of finite rank will be denoted by $\LF^c_{\ca{C}}$.

 A morphism $p \colon V \rightarrow W$ in $\ca{C}$ is called a \emph{locally split epimorphism} if there exists a faithfully flat algebra $B \in \ca{C}$ such that $p_B \colon V_B \rightarrow W_B$ is a split epimorphism of $B$-modules.

 A right exact sequence
\[
 \xymatrix{U \ar[r]^-{p} & V \ar[r]^-{q} & W \ar[r] & 0}
\]
 in $\ca{C}$ is called \emph{locally split right exact} if the kernel $K$ of $q$ exists and both $q$ and the induced morphism $U \rightarrow K$ are locally split epimorphisms.
\end{dfn}

 If $\ca{A}$ is a right exact symmetric monoidal category, we call an object (respectively a morphism, a cokernel diagram) \emph{locally free} (respectively \emph{locally split epimorphisms}, or locally split right exact sequence) if its image under the Yoneda embedding is such in the tensor category $\Ind(\ca{A})$. With this convention, the Yoneda embedding induces an equivalence $\LF^c_{\ca{A}} \simeq \LF^c_{\Ind(\ca{A})}$ (see \cite[Remark~5.6.2]{SCHAEPPI_COLIMITS}).

 One of the main technical tools introduced in \cite{SCHAEPPI_COLIMITS} is that of an \emph{Adams algebra}. These are faithfully flat algebras that satisfy very convenient stability properties: they are for example preserved by any tensor functor between lfp tensor categories. Recall that we say that a tensor category $\ca{C}$ has \emph{exact} filtered colimits if filtered colimits in $\ca{C}$ commute with finite limits. Using the fact that this is the case in $\Mod_R$ one can show that any lfp tensor category has exact filtered colimits.

 A commutative algebra $A$ in a tensor category $\ca{C}$ with exact filtered colimits is called an \emph{Adams algebra} if the unit $\eta \colon \U \rightarrow A$ can be written as filtered colimit of morphisms $\eta_i \colon \U \rightarrow A_i$ where each $A_i$ has a dual and each diagram
\[
 \xymatrix{A_i^{\vee} \otimes A_i^{\vee} \ar@<0.5ex>[r]^-{A_i^{\vee} \otimes \eta_i^{\vee}} \ar@<-0.5ex>[r]_-{\eta_i^{\vee} \otimes A_i^{\vee}} & A_i^{\vee} \ar[r]^{\eta_i^{\vee}} & \U }
\]
 is a coequalizer diagram. By \cite[Proposition~4.1.11]{SCHAEPPI_COLIMITS}, Adams algebras in tensor categories with exact filtered colimits are faithfully flat.

 If $\ca{C}$ is also abelian, there is an easier way to characterize Adams algebras: the above diagram is a coequalizer diagram if and only if all the morphisms $\eta_i^{\vee} \colon A_i^{\vee} \rightarrow \U$ are epimorphisms (see \cite[Lemma~4.1.9]{SCHAEPPI_COLIMITS}). We refer the reader to \cite[\S 4.1]{SCHAEPPI_COLIMITS} for basic properties of Adams algebras.

\subsection{Universal weakly Tannakian categories}\label{section:background_universal}

 Let $\ca{C}$ be a lfp tensor category. The \emph{universal geometric tensor category} associated to $\ca{C}$ is defined as follows. Let $\Sigma$ be the class of cokernel diagrams in $\LF^c_{\ca{C}}$ which are locally split right exact sequences in $\ca{C}$. Let
\[
 G(\LF^c_{\ca{C}}) \defl \Lex_{\Sigma}[(\LF^c_{\ca{C}})^{\op},\Mod_R]
\]
 be the category of $R$-linear presheaves $(\LF^c_{\ca{C}})^{\op} \rightarrow \Mod_R$ which send the cokernel diagrams in $\Sigma$ to kernel diagrams in $\Mod_R$.

 The \emph{universal weakly Tannakian category} $T(\ca{A})$ associated to a right exact symmetric monoidal category $\ca{A}$ is the category
\[
 T(\ca{A}) \defl G(\LF^c_{\ca{A}})_{\fp}
\]
 of finitely presentable objects in the universal geoemtric tensor category associated to $\Ind(\ca{A})$. That this is indeed a weakly Tannakian category with the desired universal property is the content of \cite[Theorem~5.6.4]{SCHAEPPI_COLIMITS}.

 From \cite[Proposition~5.5.1]{SCHAEPPI_COLIMITS} we know that there is a canonical equivalence
\[
 \LF^c_{\ca{A}} \rightarrow \LF^c_{T(\ca{A})}
\]
 induced by the Yoneda embedding.

 In order to understand the weakly Tannakian category $T(\ca{A})$, we therefore need two ingredients: we need to understand locally free objects in $\Ind(\ca{A})$, and we need to understand locally split epimorphisms between such in $\Ind(\ca{A})$. In general, this is a difficult problem. However, we mostly care about the case where $\ca{A}$ is obtained as a limit of a diagram of weakly Tannakian categories. As we will see, we can use the fact that such categories $\ca{A}$ are somehow ``close'' to categories of geometric origin to get a good handle on these two concepts.
\section{Recognizing colimits of Adams stacks}\label{section:colimits_of_adams_stacks}

\subsection{Vector bundles on colimits of Adams stacks}
 In order to better understand colimits of Adams stacks, we need a good description of the full subcategory of locally free objects of constant finite rank in $\lim \ca{A}_i$ (where $\ca{A}_i \in \ca{RM}$). We start by describing the \emph{groupoid} of locally free objects of a fixed rank $d \in \mathbb{N}$.

\begin{lemma}\label{lemma:rank_d_groupoid}
 The 2-functor
\[
 \LFdi\bigl(\Ind(-)\bigr) \colon \ca{RM} \rightarrow \Gpd
\]
 which sends a right exact symmetric monoidal category to the groupoid of locally free objects of rank $d$ and isomorphisms between them is represented by $\Rep(\mathrm{GL}_d)_{\fp}$. More precisely, the 2-natural transformation
\[
 \ca{RM}\bigl( \Rep(\mathrm{GL}_d)_{\fp}, \ca{A} \bigr) \rightarrow \LFdi\bigl(\Ind(\ca{A})\bigr)
\]
 which sends $F$ to its value $FV$ at the standard representation $V=\ubar{R}^{d}$ is an equivalence for all $\ca{A} \in \ca{RM}$.
\end{lemma}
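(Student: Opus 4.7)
The plan is to split the proof into three steps: well-definedness of the evaluation map, full faithfulness, and essential surjectivity.

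For well-definedness, I would extend any $F \in \ca{RM}(\Rep(\mathrm{GL}_d)_{\fp}, \ca{A})$ along Ind to a cocontinuous tensor functor $\widehat{F} \colon \Rep(\mathrm{GL}_d) \to \Ind(\ca{A})$ and show that $FV$ is locally free of rank $d$. The key input is that the regular representation $R[\mathrm{GL}_d]$ is an Adams algebra in $\Rep(\mathrm{GL}_d)$: writing it as the filtered union of its dualizable finite-dimensional subrepresentations $A_i$ containing the constants, each dual $A_i^{\vee} \to R$ of the unit is an epimorphism (it is evaluation at the identity of $\mathrm{GL}_d$), so the abelian criterion for Adams algebras from \S\ref{section:background_tensor} applies. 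The standard shearing isomorphism $V \otimes R[\mathrm{GL}_d] \cong R[\mathrm{GL}_d]^{\oplus d}$ trivializes $V$, and since Adams algebras are preserved by cocontinuous tensor functors between lfp tensor categories, $\widehat{F}(R[\mathrm{GL}_d])$ is a faithfully flat algebra in $\Ind(\ca{A})$ trivializing $FV$.

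For full faithfulness, I would use that $V$ tensor-generates $\Rep(\mathrm{GL}_d)$ under colimits and duality -- every finite-dimensional $\mathrm{GL}_d$-representation is a subquotient of a direct sum of tensor products of $V$ and $V^*$. A monoidal natural transformation $\widehat{F} \Rightarrow \widehat{F'}$ is therefore determined by its component at $V$; conversely, any isomorphism $FV \cong F'V$ extends, with monoidal coherence automatically satisfied (verifiable by pulling back along a common trivializing Adams algebra, where both sides become the standard $B^{\oplus d}$).

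For essential surjectivity, given $W \in \Ind(\ca{A})$ locally free of rank $d$ with trivialization $\alpha \colon W_B \cong B^{\oplus d}$ over a faithfully flat $B$, I would construct $F_W$ by a descent / associated-bundle construction. The two trivializations of $W_{B \ten{\U} B}$ differ by a cocycle $g \in \mathrm{GL}_d\bigl(\Ind(\ca{A})_{B \ten{\U} B}\bigr)$; for each $\rho \in \Rep(\mathrm{GL}_d)_{\fp}$ one equips $\rho \ten{R} B \in \Ind(\ca{A})_B$ with the descent datum obtained by twisting the canonical one via $\rho(g)$, and descends along the faithfully flat $(-)_B$ to define $F_W(\rho) \in \Ind(\ca{A})$. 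Functoriality, $R$-linearity, right exactness, and symmetric strong monoidality all reduce, after base change to $B$, to the corresponding properties of the standard forgetful functor $\Rep(\mathrm{GL}_d) \to \Mod_R$, and $F_W(V) \cong W$ by construction. Restriction to finitely presentable objects gives the desired element of $\ca{RM}(\Rep(\mathrm{GL}_d)_{\fp}, \ca{A})$, using that $\Rep(\mathrm{GL}_d)$ is generated by duals (so its cocontinuous tensor functors preserve finitely presentable objects, as recalled in \S\ref{section:background_tensor}). The main obstacle lies in this last step: the descent must be carried out purely inside the tensor category $\Ind(\ca{A})$, since $\ca{A}$ is not assumed weakly Tannakian and no underlying stack is available to simplify the argument; one has to work entirely with faithful flatness of $B$ and the general descent theory for lfp tensor categories.
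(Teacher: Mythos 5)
Your proposal is correct in outline, but it takes a genuinely different route from the paper. The paper's own proof is a two-step reduction to citations: it first factors evaluation through the equivalence $\ca{RM}\bigl(\Rep(\mathrm{GL}_d)_{\fp},\ca{A}\bigr) \simeq \Fun_{c,\otimes}\bigl(\Rep(\mathrm{GL}_d),\Ind(\ca{A})\bigr)$ given by passage to ind-objects, and then invokes \cite[Theorem~4.3.10]{SCHAEPPI_COLIMITS} for the statement that evaluation at the standard representation identifies cocontinuous tensor functors out of $\Rep(\mathrm{GL}_d)$ with locally free objects of rank $d$. What you have written is essentially a direct proof of that cited theorem: your first move (extension along $\Ind$) coincides with the paper's first step, and your use of the Adams algebra $R[\mathrm{GL}_d]$ to trivialize $FV$ is exactly the mechanism the paper extracts from the citation elsewhere (see the proof of Proposition~\ref{prop:general_locally_split_criteria}). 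The benefit of your route is that it opens the black box; the cost is that the two hardest steps are precisely the ones your sketch leaves at the level of assertion. First, in the full-faithfulness step, to \emph{define} the component of the extended transformation at an arbitrary $\rho$ you must present $\rho$ as a cokernel of a map between finite sums of objects $V^{\otimes a}\otimes (V^{\vee})^{\otimes b}$ and verify naturality with respect to \emph{all} equivariant maps between such objects, not only the structural ones; this needs either the first fundamental theorem of invariant theory for $\mathrm{GL}_d$ over an arbitrary base ring, or a carefully executed base-change-and-reflect argument (which is available, since a conservative finite-limit-preserving functor between categories with equalizers is faithful, but is not ``automatic''). Second, essential surjectivity hinges on effectiveness of descent along a merely faithfully flat algebra $B$ in the possibly non-abelian category $\Ind(\ca{A})$; there is no off-the-shelf ``general descent theory'' here, but the step does go through because the paper's definition of faithful flatness includes preservation of finite limits, so $(-)_B$ is comonadic by Beck's theorem, $\Ind(\ca{A})$ being complete. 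With those two points filled in, your argument is sound and re-proves the cited theorem rather than merely using it.
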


\begin{proof}
 Passage to ind-objects gives an equivalence
\[
 \ca{RM}\bigl( \Rep(\mathrm{GL}_d)_{\fp}, \ca{A} \bigr) \rightarrow \Fun_{c,\otimes}\bigl( \Rep(\mathrm{GL}_d), \Ind(\ca{A}) \bigr)
\]
 which is natural in $\ca{A}$ (see \cite[Lemma~2.6]{SCHAEPPI_GEOMETRIC}; this is also a special case of \cite[Theorem~3.2.4]{SCHAEPPI_COLIMITS}, where $\Sigma$ is the class of \emph{all} cokernel diagrams in $\Rep(\mathrm{GL}_d)$).

 The claim follows from the fact that evaluation at the standard representation gives an equivalence
\[
 \Fun_{c,\otimes}\bigl( \Rep(\mathrm{GL}_d),\Ind(\ca{A}) \bigr) \rightarrow \LFdi\bigl( \Ind(\ca{A}) \bigr)
\]
 (see \cite[Theorem~4.3.10]{SCHAEPPI_COLIMITS}).
\end{proof}

 The following lemma is a well known fact in geometric tensor categories. We need to prove it in greater generality since $\Ind(\ca{A})$ will often not be geometric (or even just abelian) for $\ca{A} \in \ca{RM}$. 

\begin{lemma}\label{lemma:rank_of_locally_free_unique}
 Let $\ca{C}$ be a tensor category with $\U \neq 0$ and let $M$, $N \in \ca{C}$ be locally free objects of rank $m$ and $n \in \mathbb{N}$ respectively. If there exists an isomorphism $M \rightarrow N$ in $\ca{C}$, then $m=n$.
\end{lemma}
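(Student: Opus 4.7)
\medskip
\noindent\textbf{Proof plan.} The strategy is to reduce the statement to the well-known invariant basis number property of nonzero commutative rings via a suitable faithfully flat base change. First, by the definition of locally free of constant rank, I would choose faithfully flat commutative algebras $B_1, B_2 \in \ca{C}$ with $M_{B_1} \cong B_1^{\oplus m}$ and $N_{B_2} \cong B_2^{\oplus n}$. Their tensor product $B \defl B_1 \otimes B_2$ carries a canonical commutative algebra structure, and I would verify that it is again faithfully flat: the base change functor $(-)_B \colon \ca{C} \to \ca{C}_B$ factors as $(-)_{B_1}$ followed by base change in $\ca{C}_{B_1}$ along $(B_2)_{B_1}$ (whose underlying object in $\ca{C}$ is $B$), and both factors are conservative and preserve finite limits, using faithful flatness of $B_1$ and $B_2$ in turn.

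After this further base change, the trivializations yield $M_B \cong B^{\oplus m}$ and $N_B \cong B^{\oplus n}$ in $\ca{C}_B$, and the given isomorphism $M \cong N$ produces an isomorphism $B^{\oplus m} \cong B^{\oplus n}$ in $\ca{C}_B$. Now $\ca{C}_B$ is itself a tensor category whose unit is $B$, and $B \neq 0$ by conservativity of $(-)_B$ applied to $\U \neq 0$. Applying the representable functor $\Hom_{\ca{C}_B}(B,-)$, and using that finite direct sums are biproducts in the $R$-linear category $\ca{C}_B$, I would transform the isomorphism into $S^{\oplus m} \cong S^{\oplus n}$ as left modules over the ring $S \defl \End_{\ca{C}_B}(B)$, which is commutative because $B$ is the unit of a symmetric monoidal category.

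Finally, since $B \neq 0$ we have $1 \neq 0$ in $S$, so $S$ admits a maximal ideal $\mathfrak{m}$; tensoring the isomorphism with the residue field $k \defl S/\mathfrak{m}$ yields an isomorphism $k^m \cong k^n$ of $k$-vector spaces, whence $m = n$. The only technically delicate point is the verification that $B_1 \otimes B_2$ remains faithfully flat in the general tensor category $\ca{C}$: this is the step most easily mishandled, because in this generality neither tensor functors nor the forgetful functors from module categories need preserve finite limits. The factorization of $(-)_B$ described above sidesteps this issue by reducing to two base changes, each of which is faithfully flat by hypothesis.
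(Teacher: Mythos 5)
Your proof is correct and follows essentially the same route as the paper's: base change along a faithfully flat algebra trivializing both objects, then reduce to the invariant basis number property of the non-zero commutative endomorphism ring of the unit. The paper phrases the last step via the tensor functor $\Mod_{\End(\U)} \rightarrow \ca{C}$ rather than its right adjoint $\Hom(\U,-)$, and it leaves the faithful flatness of $B_1 \otimes B_2$ implicit, but these are presentational rather than substantive differences.
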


\begin{proof}
 From the definition of locally free objects it follows that we only need to check that the existence of an isomorphism $\U^m \cong \U^n$ implies that $m=n$. But any such isomorphism lies in the image of a tensor functor
\[
 \Mod_{\End(\U)} \rightarrow \ca{C} \smash{\rlap{,}}
\]
 so the claim follows from the fact that the rank of a free module over a non-zero commutative ring is invariant under isomorphisms (and $\End(\U)$ is non-zero since we assumed that $\U \neq 0$).
\end{proof}

 The 2-functor we care about only preserves certain types of limits.

\begin{dfn}\label{dfn:connected_category}
 A category $\ca{I}$ is called \emph{connected} if any pair of objects $i, j \in \ca{I}$ is connected by a finite zig-zag
\[
 \xymatrix{i  & \ar[l] \cdot \ar[r] & \ldots   & \ar[l] j}
\]
 of morphisms in $\ca{I}$.
\end{dfn}

 For example, the indexing diagrams for pullbacks is connected, while the diagram for binary products is not connected.

\begin{prop}\label{prop:locally_free_in_limit}
 Let $\ca{I}$ be a connected category and let $F \colon \ca{I} \rightarrow \ca{RM}$, $i \mapsto \ca{A}_i$ be a diagram in $\ca{RM}$ such that $\ca{A}_i$ is non-trivial (that is, the unit object of $\ca{A}_i$ is not isomorphic to $0 \in \ca{A}_i$). Suppose that the morphisms $F_i \colon \ca{A} \rightarrow \ca{A}_i$ and the invertible 2-cells
\[
 \xymatrix{ & \ca{A}_i \ar[rd]^{F_{\psi}} \\ \ca{A} \ar[ru]^{F_i} \ar[rr]_{F_j} \rrtwocell\omit{<-3>\quad\varphi_{\psi}} && \ca{A}_j}
\]
 (for each morphism $\psi \colon i \rightarrow j$ in $\ca{I}$) exhibit $\ca{A}$ as bicategorical limit of the $\ca{A}_i$ in $\ca{RM}$. Then $M \in \ca{A}$ is a locally free object of constant finite rank if and only if $F_i(M)$ is a locally free object of constant finite rank in $\ca{A}_i$ for all $i \in \ca{I}$.
\end{prop}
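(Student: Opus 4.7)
The plan is to establish the natural equivalence of groupoids
\[
\LFdi\bigl(\Ind(\ca{A})\bigr) \simeq \lim_{i \in \ca{I}} \LFdi\bigl(\Ind(\ca{A}_i)\bigr)
\]
and then handle the constant-rank bookkeeping. For the equivalence, Lemma~\ref{lemma:rank_d_groupoid} exhibits $\LFdi(\Ind(-))$ as representable in $\ca{RM}$ by $\Rep(\mathrm{GL}_d)_{\fp}$, so naturality together with the universal property of the bicategorical limit $\ca{A}$ in $\ca{RM}$ produces the composite equivalence
\[
\LFdi\bigl(\Ind(\ca{A})\bigr) \simeq \ca{RM}\bigl(\Rep(\mathrm{GL}_d)_{\fp}, \ca{A}\bigr) \simeq \lim_i \ca{RM}\bigl(\Rep(\mathrm{GL}_d)_{\fp}, \ca{A}_i\bigr) \simeq \lim_i \LFdi\bigl(\Ind(\ca{A}_i)\bigr),
\]
which sends $N$ to the pseudocone $(F_i N)_{i \in \ca{I}}$ with transition isomorphisms obtained by whiskering the classifying morphism of $N$ with $\varphi_\psi$.

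For the forward direction, if $M$ is locally free of rank $d$ in $\ca{A}$, then Lemma~\ref{lemma:rank_d_groupoid} provides a classifying morphism $\hat M \colon \Rep(\mathrm{GL}_d)_{\fp} \to \ca{A}$ in $\ca{RM}$ with $\hat M(V) \cong M$, and postcomposition with $F_i$ witnesses $F_i M$ as locally free of rank $d$ in $\ca{A}_i$.

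For the backward direction, suppose each $F_i M$ is locally free of some rank $d_i$. Given $\psi \colon i \to j$ in $\ca{I}$, the forward implication applied to $F_\psi \in \ca{RM}$ ensures $F_\psi(F_i M)$ is locally free of rank $d_i$, while $\varphi_\psi$ provides an isomorphism $F_\psi(F_i M) \cong F_j M$. Non-triviality of $\ca{A}_j$ and Lemma~\ref{lemma:rank_of_locally_free_unique} then force $d_i = d_j$, and connectedness of $\ca{I}$ forces all $d_i$ to equal a common $d$. The family $(F_i M)$ together with the isomorphisms coming from $\varphi_\psi$ is then an object of $\lim_i \LFdi(\Ind(\ca{A}_i))$, which under the displayed equivalence is the image of some $N \in \LFdi(\Ind(\ca{A}))$ with $F_i N \cong F_i M$ compatibly. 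Using that the underlying category of the bicategorical limit in $\ca{RM}$ is the bicategorical limit of the underlying categories in $\Cat$ (see \S\ref{section:background_limits}), where objects are pseudocones from $\ast$ and isomorphism classes are detected by compatible families of their images, we conclude $N \cong M$ in $\ca{A}$, so $M$ is locally free of rank $d$.

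The principal delicate point is this last step: the representability-based equivalence only produces \emph{some} rank-$d$ object $N$ whose images under the $F_i$ match those of $M$, and one must invoke the explicit pseudocone description of the limit in $\Cat$ to transport the rank-$d$ structure from $N$ back onto the given $M$ itself.
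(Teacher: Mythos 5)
Your proposal is correct and takes essentially the same route as the paper: both arguments use the representability of $\LFdi(\Ind(-))$ by $\Rep(\mathrm{GL}_d)_{\fp}$ from Lemma~\ref{lemma:rank_d_groupoid}, combine connectedness of $\ca{I}$ with Lemma~\ref{lemma:rank_of_locally_free_unique} to obtain a uniform rank $d$, and then invoke the universal property of the bicategorical limit together with the fact that limits in $\ca{RM}$ are computed as in $\Cat$ to identify the resulting rank-$d$ object with $M$ itself. The paper phrases the final step via classifying morphisms $G_i \colon \Rep(\mathrm{GL}_d)_{\fp} \to \ca{A}_i$ assembling into a cone, while you phrase it as an equivalence of limit groupoids, but these are the same argument.
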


\begin{proof}
 If $M$ is locally free of rank $d$ in $\ca{A}$, the so is $F_i(M)$ (see Lemma~\ref{lemma:rank_d_groupoid}). It remains to check the converse.

 Assume that $F_i(M)$ is locally free of rank $d_i \in \mathbb{N}$ for all $i \in \ca{I}$. Since the category $\ca{I}$ is connected and ranks are invariant under isomorphism in non-trivial tensor categories (see Lemma~\ref{lemma:rank_of_locally_free_unique}), it follows that $F_i (M)$ is locally free of rank $d$ for a fixed $d \in \mathbb{N}$ (independent of $i \in \ca{I}$).

 From Lemma~\ref{lemma:rank_d_groupoid} we know that there are morphisms
\[
 G_i \colon \Rep(\mathrm{GL}_d)_{\fp} \rightarrow \ca{A}_i
\]
 in $\ca{RM}$ such that the value $G_i(V)$ of the standard representation $V$ is isomorphic to $F_i(M)$. Moreover, the same lemma implies that the isomorphisms 
\[
(\varphi_{\psi})_M \colon F_{\psi} \circ F_i (M) \rightarrow F_j (M) 
\]
 are induced by a unique invertible 2-cell
\[
 \xymatrix{ & \ca{A}_i \ar[rd]^{F_{\psi}} \\ \Rep(\mathrm{GL}_d)_{\fp} \ar[ru]^{G_i} \ar[rr]_-{G_j} \rrtwocell\omit{<-3>\quad\gamma_{\psi}} && \ca{A}_j} 
\]
 in $\ca{RM}$. From the uniqueness of this 2-cell it follows that the $G_i$ together with the $\gamma_{\psi}$ form a bicategorical cone on the diagram $F \colon \ca{I} \rightarrow \ca{RM}$. Thus there exists an essentially unique morphism $G \colon \Rep(\mathrm{GL}_d)_{\fp} \rightarrow \ca{A}$ with $G_i \cong F_i G$. Since limits in $\ca{RM}$ are constructed as in $\Cat$, we have $G(V) \cong M$, so $M$ is locally free of rank $d$ in $\ca{A}$ (see Lemma~\ref{lemma:rank_d_groupoid}).
\end{proof}

\begin{rmk}
 The analogous statement of Proposition~\ref{prop:locally_free_in_limit} is \emph{not} true if the category $\ca{I}$ is not connected, or if some of the $\ca{A}_i$ are allowed to be trivial. For example, if $\ca{A}=\ca{A}_1 \times \ca{A}_2$ with projections $F_i \colon \ca{A} \rightarrow \ca{A}_i$, then $F_i(M)$ can be free of rank $d_i$ with $d_1 \neq d_2$, so one of the implications of Proposition~\ref{prop:locally_free_in_limit} fails.
\end{rmk}

\begin{cor}\label{cor:lf_preserves_connected_limits}
 The 2-functor
\[
 \LF^c_{(-)} \colon \ca{RM} \rightarrow \SymMonCat_R
\]
 which sends $\ca{A}$ to the full subcategory consisting of locally free objects of constant finite rank preserves bicategorical limits of connected diagrams consisting of non-trival right exact symmetric monoidal categories. Moreover, for arbitrary diagrams, the comparison functor
\begin{equation}\label{eqn:comparison_morphism}
 \LF^c_{\lim \ca{A}_i} \rightarrow \lim \LF^c_{\ca{A}_i} 
\end{equation}
 is fully faithful.
\end{cor}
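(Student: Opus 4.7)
The plan is to identify both sides of the comparison functor with full symmetric monoidal subcategories of $\ca{A} \defl \lim \ca{A}_i$ (the bicategorical limit computed in $\ca{RM}$) and then deduce both claims from Proposition~\ref{prop:locally_free_in_limit} together with the fact that inclusions of full subcategories are automatically fully faithful.

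First I would record that the forgetful 2-functors from both $\ca{RM}$ and $\SymMonCat_R$ to $\Cat$ preserve bicategorical limits, so that in either setting a bicategorical limit is computed on underlying categories, with symmetric monoidal structure (and finite colimits in the case of $\ca{RM}$) inherited pointwise from the diagram. This is a standard consequence of 2-dimensional monad theory (\cite[\S 4]{LACK_COMPANION}), but it can also be verified directly from the explicit description of bicategorical limits in $\Cat$ recalled in \S \ref{section:background_limits}. The immediate upshot is that since each $\LF^c_{\ca{A}_i} \hookrightarrow \ca{A}_i$ is the inclusion of a full symmetric monoidal subcategory, the bicategorical limit $\lim \LF^c_{\ca{A}_i}$ in $\SymMonCat_R$ can be identified with the full symmetric monoidal subcategory of $\ca{A}$ consisting of those objects $M$ such that $F_i(M) \in \LF^c_{\ca{A}_i}$ for every $i \in \ca{I}$.

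Next I would identify $\LF^c_{\ca{A}}$ with its image in $\ca{A}$, which is a full subcategory by definition. The comparison functor \eqref{eqn:comparison_morphism} is the canonical functor between these two subcategories of $\ca{A}$, and it is well-defined because each 1-cell $F_i$ in $\ca{RM}$ preserves locally free objects of constant finite rank (by Lemma~\ref{lemma:rank_d_groupoid} applied to $\Rep(\mathrm{GL}_d)_{\fp}$). Since both categories are full subcategories of $\ca{A}$ and the comparison sends the former into the latter, it is automatically fully faithful, which is the second assertion. For the first assertion, it remains to show essential surjectivity when $\ca{I}$ is connected and every $\ca{A}_i$ is non-trivial; but this is precisely the content of Proposition~\ref{prop:locally_free_in_limit}, which states that the two full subcategories of $\ca{A}$ just identified coincide under these hypotheses.

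The main obstacle is purely bookkeeping: one must be sure that the bicategorical limit $\lim \LF^c_{\ca{A}_i}$ in $\SymMonCat_R$ is indeed the full subcategory of $\ca{A}$ described above, rather than merely mapping to it. This requires knowing both that the forgetful 2-functor $\SymMonCat_R \rightarrow \Cat$ preserves bicategorical limits and that the property of being a full subcategory inclusion is stable under the passage from a pseudocone in $\SymMonCat_R$ to its comparison with the limit. Once this is settled, everything else is a direct appeal to Lemma~\ref{lemma:rank_d_groupoid} and Proposition~\ref{prop:locally_free_in_limit}.
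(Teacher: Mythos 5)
Your proposal is correct and follows essentially the same route as the paper: both arguments rest on the fact that the forgetful 2-functors to $\Cat_R$ create bicategorical limits (so that $\lim \LF^c_{\ca{A}_i}$ sits as a full subcategory of $\lim \ca{A}_i$, making the comparison functor automatically fully faithful), and both obtain essential surjectivity in the connected, non-trivial case directly from Proposition~\ref{prop:locally_free_in_limit}. The only cosmetic difference is that the paper packages the full-faithfulness step as a commutative square with fully faithful horizontal arrows rather than as an explicit identification of both sides with full subcategories of $\lim \ca{A}_i$.
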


\begin{proof}
 We first prove the second claim. The forgetful 2-functors
\[
 \xymatrix{ \ca{RM}  \ar[r]^-{U} & \SymMonCat_R \ar[r] & \Cat_R }
\]
 both create bicategorical limits. This implies in particular that limits of morphisms in $\SymMonCat_R$ which are fully faithful are again fully faithful. 

 From Lemma~\ref{lemma:rank_d_groupoid} it follows that $\LF^c_{(-)}$ is a sub-2-functor of $U$, in the sense that there is a 2-natural transformation
\[
 \LF^c_(\ca{A}) \rightarrow U\ca{A}
\]
 given by the inclusion. From what we just noted above it follows that the two horizontal morphisms in the diagram
\[
 \xymatrix{ \LF^c_{\lim \ca{A_i}} \ar[r] \ar[d] & U \lim \ca{A}_i \ar[d]^{\simeq} \\ \lim \LF^c_{\ca{A}_i} \ar[r] & \lim U\ca{A}_i }
\]
 are fully faithful. Thus the comparison morphism \eqref{eqn:comparison_morphism} is fully faithful as well.

 The statement that this functor is essentially surjective if the diagram connected and all its vertices are non-trivial is precisely the conclusion of Proposition~\ref{prop:locally_free_in_limit}.
\end{proof}

\begin{cor}\label{cor:vector_bundles_on_colimits}
 Let $\ca{I}$ be a connected category and let $X \colon \ca{I} \rightarrow \ca{AS}$ be a diagram in the category of Adams stacks such that $X_i \neq \varnothing$ for all $i \in \ca{I}$. Then the comparison functor
\[
 \VB^c(\colim X_i) \rightarrow \lim \VB^c(X_i)  
\]
 of symmetric monoidal $R$-linear categories is an equivalence (where $\colim X_i$ is the bicategorical colimit of the diagram $X$ in the 2-category of Adams stacks).
\end{cor}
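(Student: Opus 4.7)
The plan is to transport the statement across the Tannaka biequivalence of Section~\ref{section:background_tannaka} and then invoke Corollary~\ref{cor:lf_preserves_connected_limits}. Write $\ca{A}_i \defl \QCoh_{\fp}(X_i)$, so that $\VB^c(X_i) \simeq \LF^c_{\ca{A}_i}$. By Tannaka duality, $\QCoh_{\fp}(-)$ sends the bicategorical colimit $\colim X_i$ in $\ca{AS}$ to the bicategorical limit of the $\ca{A}_i$ in the 2-category of weakly Tannakian categories, and as explained at the end of \S \ref{section:background_tannaka} this limit is computed by first taking the limit $\ca{A} \defl \lim \ca{A}_i$ in $\ca{RM}$ and then applying the right biadjoint $T$. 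Thus $\QCoh_{\fp}(\colim X_i) \simeq T(\ca{A})$.

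Next, I would use the equivalence $\LF^c_{\ca{A}} \simeq \LF^c_{T(\ca{A})}$ induced by the Yoneda embedding, recorded at the end of \S \ref{section:background_universal} (following from \cite[Proposition~5.5.1]{SCHAEPPI_COLIMITS}). Combined with the previous step this yields
\[
 \VB^c(\colim X_i) \simeq \LF^c_{T(\ca{A})} \simeq \LF^c_{\ca{A}} = \LF^c_{\lim \ca{A}_i}
\]
as symmetric monoidal $R$-linear categories, and this equivalence is compatible with the comparison morphism to $\lim \LF^c_{\ca{A}_i} = \lim \VB^c(X_i)$.

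It then remains to invoke Corollary~\ref{cor:lf_preserves_connected_limits}, which says that $\LF^c_{(-)}$ preserves bicategorical limits of connected diagrams of non-trivial objects of $\ca{RM}$. The indexing category $\ca{I}$ is connected by hypothesis, so the only point to check is that each $\ca{A}_i = \QCoh_{\fp}(X_i)$ is non-trivial, i.e.\ that the unit $\U_{X_i}$ is not isomorphic to $0$. This is precisely the hypothesis $X_i \neq \varnothing$: the empty stack is the unique Adams stack whose structure sheaf is zero, since the existence of any morphism $\Spec(B) \to X_i$ with $B \neq 0$ forces $\U_{X_i} \neq 0$, and conversely $\U_{X_i} = 0$ implies $X_i$ has no points. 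Thus Corollary~\ref{cor:lf_preserves_connected_limits} applies and gives $\LF^c_{\lim \ca{A}_i} \simeq \lim \LF^c_{\ca{A}_i}$, completing the proof.

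The substantive content has already been done in Corollary~\ref{cor:lf_preserves_connected_limits}; the only non-routine point is the translation of the geometric hypothesis $X_i \neq \varnothing$ into the algebraic non-triviality condition on $\ca{A}_i$, but this is immediate from the biequivalence between Adams stacks and weakly Tannakian categories.
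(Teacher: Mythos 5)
Your proposal is correct and follows essentially the same route as the paper: both arguments factor $\VB^c(-)$ through $\QCoh_{\fp}(-)$ and $\LF^c_{(-)}$, identify the failure of $\QCoh_{\fp}(-)$ to preserve the colimit with the coreflection $T\bigl(\lim \QCoh_{\fp}(X_i)\bigr) \rightarrow \lim \QCoh_{\fp}(X_i)$, and then combine \cite[Proposition~5.5.1]{SCHAEPPI_COLIMITS} (that is, $\LF^c_{\ca{A}} \simeq \LF^c_{T(\ca{A})}$) with Corollary~\ref{cor:lf_preserves_connected_limits}. Your explicit check that $X_i \neq \varnothing$ translates into non-triviality of $\QCoh_{\fp}(X_i)$ is a small addition the paper leaves implicit, but it is correct and harmless.
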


\begin{proof}
 Note that for any Adams stack $Y$ we have
\[
 \VB^c(Y)=\LF^c_{\QCoh_{\fp}(Y)}
\]
 since vector bundles of constant rank are precisely the locally free objects of constant finite rank. The pseudfunctor $\VB^c(-)$ therefore factors as the composite
\[
 \xymatrix@C=45pt{\ca{AS}^{\op} \ar[r]^-{\QCoh_{\fp}(-)} & \ca{RM} \ar[r]^-{\LF^c_{(-)}} & \SymMonCat_R} 
\]
 of two pseudofunctors. Note that the latter preserves limits of the given type by Corollary~\ref{cor:lf_preserves_connected_limits}, but the former does not: under the equivalence between $\ca{AS}^{\op}$ and the full sub-2-category of $\ca{RM}$ of weakly Tannakian categories, the comparison morphism between the image of the limit and the limit of the image is precisely the coreflection $T\bigr(\lim \QCoh_{\fp}(X_i)\bigl) \rightarrow \lim \QCoh_{\fp} (X_i)$. 

 But this means that it suffices to check that $\LF^c(-)$ sends this coreflection to an equivalence of categories, which follows from \cite[Proposition~5.5.1]{SCHAEPPI_COLIMITS} and the definition of $T(-)$.
\end{proof}

 \subsection{The recognition theorem for colimits}
 Our next goal is to prove a theorem which characterizes certain colimit diagrams in terms of their categories of quasi-coherent sheaves. We call a set of objects $\ca{I}_0$ of a small indexing category $\ca{I}$ \emph{isomorphism detecting} if for any diagram $\ca{I} \rightarrow \CAT_R$, $i \mapsto \ca{C}_i$ in the 2-category of (possibly large) $R$-linear categories, the functor
\[
 \xymatrix{\lim \ca{C}_i \ar[r] & \textstyle\prod\nolimits_{i \in \ca{I}_0} \ca{C}_i}
\]
 induced by the canonical projections detects isomorphisms. For example, in the indexing category 
\[
 \xymatrix {0 \ar[r] & 1  & 2 \ar[l] }
\]
 for pullbacks, the set $\{0,2\}$ is isomorphism detecting: a morphism in the bicategorical pullback $\ca{A} \pb{\ca{C}} \ca{B}$ is an isomorphism if and only if its projections to $\ca{A}$ and $\ca{B}$ are isomorphisms.

 Note that $\ca{I}_0=\ca{I}$ is always ismorphism detecting since a morphism in $\lim \ca{C}_i$ is an isomorphism if and only if its component in $\ca{C}_i$ is an isomorphism for all $i \in \ca{I}$.

\begin{thm}\label{thm:colimit_recognition}
 Let $\ca{I}$ be a connected category and let $\ca{I}_0 \subseteq \ca{I}$ be isomorphism detecting as a subset of $\ca{I}^{\op}$. Let $X \colon \ca{I} \rightarrow \ca{AS}$ be a diagram with $X_i \neq \varnothing$ and let
 \[
 \xymatrix{& X_j \ar[rd]^{f_j} \\ X_i \rrtwocell\omit{<-3>\quad\kappa_{\varphi}}   \ar[ru]^{X_{\varphi}} \ar[rr]_-{f_i} && W}
\]
 be a pseudococone\footnote{A pseudococone is the dual notion of a pseudocone introduced in \S \ref{section:background_limits}.} on the diagram $X$. Suppose that the following conditions hold:
\begin{enumerate}
 \item[(i)] the pseudocone obtained by applying $\VB^{c}(-)$ to the pseudococone $(f_i,\kappa_{\varphi})$ induces an equivalence
\[
 \VB^c(W) \rightarrow \lim \VB^c(X_i)
\]
 of $R$-linear categories;

\item[(ii)] the functors 
\[
f_i^{\ast} \colon \QCoh_{\fp}(W) \rightarrow \QCoh_{\fp}(X_i) 
\]
 for $i \in \ca{I}_0$ jointly detect epimorphisms;
\end{enumerate}
 then $(f_i,\kappa_{\varphi})$ exhibits $W$ as bicategorical colimit of the diagram $X$. If, in addition, $\ca{I}$ is finite, the converse is true as well: if $(f_i,\kappa_{\varphi})$ exhibits $W$ as bicategorical colimit in $\ca{AS}$, then the above two conditions are satisfied.
\end{thm}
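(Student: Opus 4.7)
The strategy is to translate the problem through Tannaka duality. Let $\ca{L} := \lim \QCoh_{\fp}(X_i)$ be the bicategorical limit in $\ca{RM}$; the pseudococone $(f_i,\kappa_\varphi)$ induces a morphism $F : \QCoh_{\fp}(W) \to \ca{L}$ in $\ca{RM}$. Since $\QCoh_{\fp}(W)$ is weakly Tannakian, $F$ factors essentially uniquely through the counit $T(\ca{L}) \to \ca{L}$, giving $\tilde F : \QCoh_{\fp}(W) \to T(\ca{L})$. By the main result of \cite{SCHAEPPI_COLIMITS} recalled in \S\ref{section:background_tannaka}, $T(\ca{L})$ is the bicategorical limit of the $\QCoh_{\fp}(X_i)$ in the 2-category of weakly Tannakian categories, and corresponds under Tannaka duality to the bicategorical colimit of $X$ in $\ca{AS}$. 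The theorem therefore amounts to characterizing when $\tilde F$ is an equivalence in $\ca{RM}$.

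For the sufficiency of (i) and (ii), recall from \cite[Proposition~5.5.1]{SCHAEPPI_COLIMITS} that every weakly Tannakian category $\ca{B}$ is reconstructed as $T(\LF^c_{\ca{B}})$, where $T$ depends on $\LF^c_{\ca{B}}$ together with its class $\Sigma_{\ca{B}}$ of cokernel diagrams that are locally split right exact in $\Ind(\ca{B})$. Condition (i), combined with Corollary~\ref{cor:vector_bundles_on_colimits} applied to the colimit in $\ca{AS}$, provides an equivalence of categories $\VB^c(W) \simeq \LF^c_{T(\ca{L})}$ restricting $\tilde F$. Since $\tilde F$ is a morphism in $\ca{RM}$, it automatically sends $\Sigma_{\QCoh_{\fp}(W)}$ into $\Sigma_{T(\ca{L})}$; the remaining content is that it must also \emph{reflect} this class, so that $T$ applied to both sides yields the desired equivalence. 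This reflection step is the main technical obstacle. The idea is that condition (ii), together with the isomorphism-detecting property of $\ca{I}_0 \subseteq \ca{I}^{\op}$, forces $\QCoh_{\fp}(W) \to \prod_{i \in \ca{I}_0} \QCoh_{\fp}(X_i)$ to detect epimorphisms and hence (after passage to the abelian ind-completion) to detect vanishing. Given a cokernel diagram of vector bundles on $W$ whose image in $\ca{L}$ is locally split right exact, one assembles local splittings over suitable Adams algebras on each $X_i$ with $i \in \ca{I}_0$ and descends them---using the stability of Adams algebras under tensor functors together with condition (ii)---to an Adams algebra splitting in $\QCoh(W)$.

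For the converse under the finiteness assumption on $\ca{I}$: condition (i) is immediate from Corollary~\ref{cor:vector_bundles_on_colimits} applied to $W \simeq \colim X_i$. For condition (ii), suppose $p : M \to N$ in $\QCoh_{\fp}(W)$ satisfies $f_i^{\ast} p$ epic for every $i \in \ca{I}_0$; equivalently, the cokernel $C$ of $p$ in $\QCoh_{\fp}(W)$ satisfies $f_i^{\ast} C \cong 0$ for $i \in \ca{I}_0$. Transporting $C$ through the equivalence $\tilde F$, the image of $\tilde F(C)$ in $\ca{L}$ vanishes by the isomorphism-detecting hypothesis on $\ca{I}_0$. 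The finiteness of $\ca{I}$ is used to ensure that the counit $T(\ca{L}) \to \ca{L}$ is sufficiently well behaved---with finite limits computed coordinatewise preserving finite presentability---that vanishing of $\tilde F(C)$ in $\ca{L}$ implies vanishing in $T(\ca{L})$, from which $C \cong 0$ and $p$ is an epimorphism.
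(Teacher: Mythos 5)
Your high-level framework is the right one and matches the paper's: translate through Tannaka duality, reduce to comparing $\VB^c(W)$ with the locally free objects of the limit together with the distinguished classes of cokernel diagrams, and observe that finiteness of $\ca{I}$ is what makes objects of $\lim\QCoh_{\fp}(X_i)$ with finitely presentable components finitely presentable. But both directions have gaps at exactly the points where the real work lies. In the forward direction, your mechanism for showing that $\tilde F$ \emph{reflects} the class $\Sigma$ --- assembling local splittings over Adams algebras on each $X_i$ and ``descending'' them to an Adams algebra splitting in $\QCoh(W)$ --- does not work as stated: tensor functors transport Adams algebras from $\QCoh(W)$ \emph{to} each $\QCoh(X_i)$, not back, and there is no descent mechanism along the $f_i$. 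The paper sidesteps this entirely: since $W$ and $\colim X_i$ are Adams stacks, both $\QCoh(W)$ and $\QCoh(\colim X_i)$ are geometric tensor categories, so every epimorphism between vector bundles is \emph{automatically} locally split (\cite[Example~5.2.2]{SCHAEPPI_COLIMITS}); one therefore only needs to match the classes of epimorphisms between objects of $\VB^c$, which is exactly what condition (ii) supplies, and then invoke the reconstruction result \cite[Corollary~3.4.3]{SCHAEPPI_COLIMITS}.

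The larger gap is in the converse. Your final step asserts that vanishing of $\tilde F(C)$ in $\lim\QCoh_{\fp}(X_i)$ (equivalently, in $\prod_{i\in\ca{I}_0}\QCoh_{\fp}(X_i)$, by the isomorphism-detecting hypothesis) implies $C\cong 0$, justified only by the counit being ``sufficiently well behaved.'' This is precisely the hard content of the theorem (Lemma~\ref{lemma:structural_morphisms_detect_epis} in the paper), and it does not follow formally: $\Ind\bigl(\lim\QCoh_{\fp}(X_i)\bigr)$ need not be abelian, and the counit from the universal geometric tensor category to it has no a priori faithfulness properties. The paper's proof requires three nontrivial ingredients that your proposal does not supply: (a) that $\Ind(\lim\ca{A}_i)\rightarrow\prod_{i\in\ca{I}_0}\Ind(\ca{A}_i)$ is conservative (Lemma~\ref{lemma:ind_lim_to_prod_ind_conservative}, which is where finiteness of $\ca{I}$ actually enters, via Lemma~\ref{lemma:ff_on_fp_implies_conservative}); (b) that a conservative tensor functor detects locally split epimorphisms between locally free objects of constant finite rank in possibly non-abelian lfp tensor categories (Proposition~\ref{prop:general_locally_split_criteria}, proved by an inductive upper-triangular-matrix argument); and (c) that epimorphism detection on the generating subcategory of locally free objects propagates to all finitely presentable objects (Lemma~\ref{lemma:epi_detection_in_lfp_abelian_cats}). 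Without these, the converse is unestablished.
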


 That the diagram in question is a colimit diagram under the given assumptions follows from the ``generator and relations'' presentation of the category of quasi-coherent sheaves given in \cite[Corollary~3.3.10]{SCHAEPPI_COLIMITS}, the Tannaka duality results recalled in \S \ref{section:background_tannaka}, and some facts that we already established about limits of weakly Tannakian categories. It is much harder to check the converse, especially that Condition~(ii) is satisfied for the colimit pseudococone. This is perhaps not so surprising since our construction of colimits was very indirect. The proof requires a careful analysis of locally split epimorphisms in (potentially non-abelian) lfp tensor categories.

 In order to do this we will use \cite[Lemma~5.3.11]{SCHAEPPI_COLIMITS}, which states that if $\ca{C}$ is a tensor category with exact filtered colimits, $p \colon M \rightarrow \U$ a morphism whose domain is a retract of a locally free object of constant finite rank, and $F \colon \ca{C} \rightarrow \ca{D}$ is a conservative tensor functor such that $Fp$ is a locally split epimorphism, then  $p$ itself locally split as well. Moreover, under these assumptions $p$ is exhibited as locally split epimorphism by an Adams algebra in $\ca{C}$.

\begin{prop}\label{prop:general_locally_split_criteria}
 Let $\ca{C}$ and $\ca{D}$ be tensor categories with exact filtered colimits and with equalizers, and let $F \colon \ca{C} \rightarrow \ca{D}$ be a conservative tensor functor. Let $p \colon M \rightarrow N$ be a morphism between locally free objects of constant finite rank. Then $p$ is locally split if and only if $Fp$ is locally split.

 Moreover, in this case there exists finite tensor product $A=\bigotimes_{i=1}^n A_i$ of Adams algebras in $\ca{C}$ such that $p_A$ is a split epimorphism.
\end{prop}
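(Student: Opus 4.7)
The proof reduces the proposition to \cite[Lemma~5.3.11]{SCHAEPPI_COLIMITS}, which handles morphisms with codomain the monoidal unit $\U$. The forward direction, that $p$ locally split implies $Fp$ locally split, will follow from the ``moreover'' statement once established: a splitting of $p$ by a finite tensor product of Adams algebras in $\ca{C}$ pushes forward under $F$ to a tensor product of Adams algebras in $\ca{D}$ (tensor functors preserve Adams algebras, as recalled in \S\ref{section:background_tensor}), which is faithfully flat. Thus the main task is to prove the converse together with the splitting-by-Adams conclusion.

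The plan is to proceed by induction on $n = \rk N$. For the base case $n = 1$, $N$ is invertible and the equivalence $(-) \otimes N^\vee \colon \ca{C} \rightarrow \ca{C}$ identifies $p$ with a morphism $p \otimes N^\vee \colon M \otimes N^\vee \rightarrow \U$ (using $N \otimes N^\vee \cong \U$) whose domain is locally free of constant rank $\rk M$. Since invertibility of $N$ is preserved by $F$ and since local splittability is preserved by the equivalence, \cite[Lemma~5.3.11]{SCHAEPPI_COLIMITS} applied to $p \otimes N^\vee$ and $F$ yields the assertion with a splitting by a single Adams algebra.

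For the inductive step $n \geq 2$, first choose an Adams algebra $A_0 \in \ca{C}$ trivializing $N$, so that $N_{A_0} \cong A_0^{\oplus n}$ in $\ca{C}_{A_0}$; the existence of such a universal framing algebra follows from the results of \S 4 of \cite{SCHAEPPI_COLIMITS}. Work inside $\ca{C}_{A_0}$, which still has exact filtered colimits and equalizers, with the conservative tensor functor $F_{A_0} \colon \ca{C}_{A_0} \rightarrow \ca{D}_{F(A_0)}$. The morphism $p_{A_0}$ decomposes into $n$ components $p_i \colon M_{A_0} \rightarrow A_0$. Applying the base case (i.e., \cite[Lemma~5.3.11]{SCHAEPPI_COLIMITS}) to $p_1$ in $\ca{C}_{A_0}$ yields an Adams algebra $B_1$ in $\ca{C}_{A_0}$ such that $(p_1)_{B_1}$ is split; the section induces a decomposition $M_{B_1} \cong B_1 \oplus K$ with $K$ locally free of constant rank $\rk M - 1$. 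A direct block-matrix calculation shows that $p_{B_1}$ is split iff the induced morphism $p' \colon K \rightarrow B_1^{\oplus (n-1)}$ is split, and the inductive hypothesis applied to $p'$ in $\ca{C}_{B_1}$ with $F_{B_1}$ completes the splitting step.

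The main obstacle is to verify that the combined splitting algebra $A_0 \otimes B_1 \otimes \cdots \otimes B_n$ is a finite tensor product of Adams algebras \emph{in $\ca{C}$}, given that the intermediate $B_i$ are produced as Adams algebras in successive slice categories $\ca{C}_{A_0 \otimes B_1 \otimes \cdots \otimes B_{i-1}}$. The resolution requires a base-change compatibility for the construction of \cite[Lemma~5.3.11]{SCHAEPPI_COLIMITS}: the Adams algebra produced by that lemma inside a slice $\ca{C}_A$ should admit a presentation of the form $A \otimes \tilde{B}$ for an Adams algebra $\tilde{B}$ of $\ca{C}$, so that the iterated tensor products rewrite as a single tensor product in $\ca{C}$ of Adams algebras. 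Establishing this compatibility (together with the closure of Adams algebras under tensor products recalled in \S\ref{section:background_tensor}) is the principal technical point.
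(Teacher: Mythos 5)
Your overall strategy is the same as the paper's: trivialize $N$ by an Adams algebra, induct on the rank of the codomain by splitting off one component at a time via \cite[Lemma~5.3.11]{SCHAEPPI_COLIMITS}, and deduce the forward implication from the Adams-splitting conclusion. However, there are two genuine gaps. The first is your assertion that the complementary summand in $M_{B_1} \cong B_1 \oplus K$ is locally free of constant rank $\rk M - 1$. This is not justified: $K$ is only a retract of a locally free object of constant finite rank (it becomes \emph{stably} free after a trivializing base change), and in a general, possibly non-abelian lfp tensor category there is no reason for such a retract to be locally free of constant rank. Since your induction hypothesis requires the domain to be locally free of constant rank, it does not apply to $p' \colon K \rightarrow B_1^{\oplus (n-1)}$ and the induction does not close. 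The paper resolves exactly this by proving the stronger statement in which the domain is merely a retract of a locally free object of constant finite rank --- which is precisely the generality in which Lemma~5.3.11 is stated --- and carrying that weaker hypothesis through the induction.

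The second gap is in the forward direction. As set up, your ``main task'' establishes the Adams-splitting conclusion only under the hypothesis that $Fp$ is locally split, so it cannot be quoted when all you know is that $p$ is locally split by some faithfully flat algebra $B$ (which need not be Adams, so $FB$ need not be faithfully flat). The missing step, which is how the paper argues, is to apply the hard direction to the conservative tensor functor $(-)_B \colon \ca{C} \rightarrow \ca{C}_B$: since $p_B$ is split, this yields a finite tensor product of Adams algebras of $\ca{C}$ splitting $p$, and only then does preservation of Adams algebras by $F$ give that $Fp$ is locally split. Finally, the ``principal technical point'' you leave open --- reassembling the tower of splitting algebras produced over successive base changes into a finite tensor product of Adams algebras of $\ca{C}$ itself --- is indeed needed; the paper disposes of it not via a presentation of the form $A \otimes \tilde{B}$ but by noting at the outset that for an Adams algebra $A$ the passage to $\ca{C}_A$ and $\overline{F}$ preserves all relevant hypotheses, that Lemma~5.3.11 outputs an Adams algebra at each stage, and by invoking the stability properties of Adams algebras from \cite[\S 4.1]{SCHAEPPI_COLIMITS}. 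As written, your proposal acknowledges rather than closes these points.
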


\begin{proof}
 For any Adams algebra $A \in \ca{C}$, the induced tensor functor $\overline{F} \colon \ca{C}_A \rightarrow \ca{D}_{FA}$ is also conservative. Since filtered colimits in $\ca{C}$ and $\ca{D}$ are exact, both $A$ and $FA$ are are faithfully flat (see \cite[Proposition~4.1.12]{SCHAEPPI_COLIMITS}). Moreover, $p$ (respectively $Fp$) is a locally split epimorphism if and only if $p_A$ (respectively $(Fp)_{FA}$) is locally split: this follows since any base change functor $(-)_A$ preserves \emph{all} faithfully flat algebras (not just Adams algebras).

 For this reason, we can simplify the situation as follows. By \cite[Theorem~4.3.10]{SCHAEPPI_COLIMITS}, there exists an Adams algebra $A \in \ca{C}$ such that $N_A \cong A^d$ (since this is true for the standard representation in the category $\Rep(\mathrm{GL}_d)$). Passing from $F$ to the tensor functor $\overline{F}$ defined above if necessary, we can therefore assume without loss of generality that $N=\U^d$. We will prove the slightly stronger statement that the claim is true if the domain $M$ is merely a retract of a locally free object of constant finite rank. As we will see, this allows us to proceed by induction.

 Suppose now that $Fp$ is a locally split epimorphism. By composing with a base change functor $(-)_B \colon \ca{D} \rightarrow \ca{D}_B$ if necessary we can assume that $Fp$ is a split epimorphism. Then the composite $F\pi_1 \circ Fp$ is split as well, where $\pi_1 \colon \U^d \rightarrow \U$ denotes the first projection. Since $F$ is conservative, it reflects all the colimits it preserves. From \cite[Lemma~5.3.11]{SCHAEPPI_COLIMITS} it follows that there exists an Adams algebra $A \in \ca{C}$ which splits $\pi_1 p$. By passing to $\overline{F}$ again if necessary, we can assume that there exists a section $s \colon \U \rightarrow M$ of $\pi_1 p$.

 By splitting the idempotent $s \pi_1 p$ on the object $M$, we find that $p$ is---up to isomorphism---given by an upper triangular matrix
\[
 \xymatrix{\U \oplus M^{\prime} 
\ar[r]^{
\bigl( \begin{smallmatrix} 
  1 & \ast \\ 
  0 & p^{\prime}
\end{smallmatrix} \bigr)} 
& \U \oplus \U^{d-1}}
\]
 for some morphism $p^{\prime} \colon M^{\prime} \rightarrow \U$. Note that $M^{\prime}$ is a retract of a locally free object of constant finite rank, and $Fp^{\prime}$ is split since $Fp$ is. Proceeding inductively we reduce to the case $d=1$, where the claim that $p$ is locally split follows from \cite[Lemma~5.3.11]{SCHAEPPI_COLIMITS}. Note that we have in fact shown that $p$ is split by a finite tensor product of Adams algebras in $\ca{C}$.

 It remains to show the converse: if $p \colon M \rightarrow \U^d$ is a locally split epimorphism (split by some faithfully flat algebra $B$, say), then $Fp$ is a locally split epimorphism. Note that $B$ might not itself be an Adams algebra, so we do not know whether or not $FB$ is faithfully flat.

 However, if we apply the direction we proved above to the conservative tensor functor $(-)_B \colon \ca{C} \rightarrow \ca{C}_B$, we find that there exists a finite tensor product $A=\bigotimes_{i=1}^n A_i$ of Adams algebras which exhibit $p$ as locally split epimorphism. The fact that $F$ preserves Adams algebras implies that $Fp$ is a locally split epimorphism.
\end{proof}

\begin{lemma}\label{lemma:ff_on_fp_implies_conservative}
 Let $\ca{C}$ be a lfp $R$-linear category, $\ca{D}$ a cocomplete $R$-linear category, and $L \colon \ca{C} \rightarrow \ca{D}$ a functor which is cocontinuous and preserves finitely presentable objects. If the restriction of $L$ to $\ca{C}_{\fp}$ is fully faithful, then $F$ is conservative.
\end{lemma}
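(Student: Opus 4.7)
The plan is to upgrade the hypothesis, that $L$ is fully faithful on $\ca{C}_{\fp}$, to the statement that $L$ is fully faithful on all of $\ca{C}$, and then invoke the elementary fact that any fully faithful functor is conservative. Once we have a two-sided inverse for $Lf$ in $\ca{D}$, fullness on the relevant hom-set produces a preimage, and faithfulness forces the composites with $f$ to equal the identities.

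To extend full faithfulness from $\ca{C}_{\fp}$ to $\ca{C}$, I would exploit that $\ca{C}$ is lfp, so every pair of objects $X,Y \in \ca{C}$ admits presentations $X \cong \colim_i X_i$ and $Y \cong \colim_j Y_j$ as filtered colimits of finitely presentable objects. Since $L$ is cocontinuous it preserves these filtered colimits, and since it preserves finitely presentable objects, each $LX_i$ is finitely presentable in $\ca{D}$. This allows the following computation, natural in $L$:
\[
\ca{D}(LX,LY)
\;\cong\; \lim_i \ca{D}(LX_i,LY)
\;\cong\; \lim_i \colim_j \ca{D}(LX_i,LY_j),
\]
where the first isomorphism uses that $LX \cong \colim_i LX_i$ and the second uses that $LX_i$ is finitely presentable in $\ca{D}$. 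The same reasoning in $\ca{C}$ (using that $X_i$ is finitely presentable there) gives
\[
\ca{C}(X,Y) \;\cong\; \lim_i \colim_j \ca{C}(X_i,Y_j).
\]
The hypothesis that $L$ is fully faithful on $\ca{C}_{\fp}$ provides bijections $\ca{C}(X_i,Y_j) \cong \ca{D}(LX_i,LY_j)$, and these bijections are compatible with the structure maps of the double diagrams, so we obtain $\ca{C}(X,Y) \cong \ca{D}(LX,LY)$ and by inspection this isomorphism is the one induced by $L$.

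There is no real obstacle here; the only subtlety to check is that the isomorphisms in the two displayed chains really agree under the canonical comparison induced by $L$, which is a routine naturality verification using the universal properties of the colimits and limits involved. Having established that $L$ is fully faithful on $\ca{C}$, the conservativity follows because if $Lf$ is an isomorphism with inverse $h$ in $\ca{D}$, fullness produces $g \in \ca{C}$ with $Lg = h$, and faithfulness applied to $L(gf) = \id = L(fg) = L(\id)$ forces $gf = \id$ and $fg = \id$, so $f$ is an isomorphism.
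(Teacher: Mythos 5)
Your argument is correct, and it in fact establishes something stronger than the lemma asks for: that $L$ is fully faithful on all of $\ca{C}$, not merely conservative. The route is genuinely different from the paper's. The paper never upgrades to global full faithfulness; it first proves faithfulness of $L$ directly (testing a morphism $f$ with $Lf=0$ against maps $k \colon A \rightarrow X$ from finitely presentable objects and chasing the factorization through a finite stage of $Y \cong \colim Y_j$), and then proves conservativity separately by using that $\ca{C}_{\fp}$ is a dense generator: for $Lf$ invertible one lifts each $k \colon A \rightarrow Y$ with $A$ finitely presentable along $f$, again by factoring $(Lf)^{-1} \circ Lk$ through a colimit stage and using fullness on $\ca{C}_{\fp}$ to descend the factorization. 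Your approach instead packages everything into the formula $\ca{C}(X,Y) \cong \lim_i \colim_j \ca{C}(X_i,Y_j)$ and its counterpart in $\ca{D}$ (the latter valid because $L$ is cocontinuous and sends finitely presentable objects to finitely presentable objects), and transports the levelwise bijections through $\lim_i\colim_j$. What your version buys is a cleaner, more conceptual statement with the stronger conclusion of full faithfulness, at the cost of the naturality verification you flag; what the paper's version buys is a morphism-by-morphism argument that only ever invokes the universal properties one at a time and isolates exactly the two consequences (faithfulness, then lifting against a dense generator) that are needed. Both hinge on the same two hypotheses in the same way, so I consider your proof a valid and slightly stronger alternative.
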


\begin{proof}
 We first show that $L$ is faithful. Let $f \colon X \rightarrow Y$ be a morphism in $\ca{C}$ such that $Lf=0$. Write $Y$ as a filtered colimit $Y \cong \colim Y_i$ of finitely presentable objects $Y_i$. It suffices to check that for every $k \colon A \rightarrow X$ with $A$ finitely presentable, we have $fk=0$.

 Since $A$ is finitely presentable, there exists a morphism $f^{\prime}$ such that the diagram
\[
 \xymatrix{A \ar[r]^-{k} \ar[d]_{f^{\prime}} & X \ar[d]^{f} \\ Y_i \ar[r]^-{\lambda_i} & Y}
\]
 is commutative, where $\lambda_i$ denotes the structure morphism of the colimit. By assumption, $LA$ is finitely presentable and $LY \cong \colim LY_i$, so the fact that $L\lambda_i L f^{\prime}=0$ implies that $L$ sends the composite
\[
 \xymatrix{A \ar[r]^-{f^{\prime}} & Y_i \ar[r] & Y_j }
\]
 to the zero morphism for some $j$ in the indexing category. But both $A$ and $Y_j$ lie in $\ca{C}_{\fp}$, so the above composite---and therefore $fk$---must be equal to the zero morphism, which concludes the proof that $L$ is faithful.

 Now let $f \colon X \rightarrow Y$ be a morphism such that $Lf$ is an isomorphism. Since $\ca{C}_{\fp}$ is a strong generator (it is even dense), it suffices to check that for any $k \colon A \rightarrow Y$ with $A \in \ca{C}_{\fp}$, there exists a unique $k^{\prime} \colon A \rightarrow X$ with $fk^{\prime}=k$. Since we already established that $L$ is faithful, it suffices to check existence of such a morphism $k^{\prime}$.

 Write $X$ as filtered colimit $X \cong \colim X_i$ with $X_i \in \ca{C}_{\fp}$. By assumption, $LA$ is finitely presentable, so the composite $(Lf)^{-1} \circ Lk \colon LA \rightarrow LX \cong \colim LX_i$ factors through one of the structure morphisms $L\lambda_i \colon LX_i \rightarrow LX$ of the colimit. Since both $A$ and $X_i$ lie in $\ca{C}_{\fp}$, the assumption on $L$ implies that this factorization is of the form $L \ell$ for some morphism $\ell \colon A \rightarrow X_i$. Since $L$ is faithful, we find that the morphism $k^{\prime}=\lambda_i \ell$ gives the desired lift of $k$.
\end{proof}

 Recall that we write $\Rex$ for the 2-category of finitely cocomplete $R$-linear categories and right exact functors (that is, finite colimit preserving functors) between them.

\begin{lemma}\label{lemma:ind_lim_to_prod_ind_conservative}
 Let $\ca{I}$ be a finite category with isomorphism detecting set $\ca{I}_0 \subseteq \ca{I}$ and let $\ca{I} \rightarrow \Rex$, $i \mapsto \ca{A}_i$ be a diagram. Then the left adjoint
\[
F \colon \Ind(\lim \ca{A}_i) \rightarrow \textstyle\prod\nolimits_{i \in \ca{I}_0} \Ind(\ca{A}_i)
\]
 induced by the structure morphisms $\lim \ca{A}_i \rightarrow \ca{A}_i$ is conservative.
\end{lemma}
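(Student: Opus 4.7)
The strategy is to factor $F$ through the bicategorical limit $\lim_{\CAT_R} \Ind(\ca{A}_i)$ taken in the 2-category of $R$-linear categories. Each structure morphism $P_i \colon \lim \ca{A}_i \rightarrow \ca{A}_i$ in $\Rex$ extends via $\Ind$ to a cocontinuous functor $\Ind(P_i) \colon \Ind(\lim \ca{A}_i) \rightarrow \Ind(\ca{A}_i)$, and these assemble with the evident coherence data into a pseudocone that is classified by a functor $\Phi \colon \Ind(\lim \ca{A}_i) \rightarrow \lim_{\CAT_R} \Ind(\ca{A}_i)$. Writing $p \colon \lim_{\CAT_R} \Ind(\ca{A}_i) \rightarrow \prod_{i \in \ca{I}_0} \Ind(\ca{A}_i)$ for the canonical projection, we have $F = p \circ \Phi$, so it suffices to show that both $\Phi$ and $p$ are conservative. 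Conservativity of $p$ is immediate from the definition of isomorphism-detecting applied to the diagram $i \mapsto \Ind(\ca{A}_i)$ of $R$-linear categories.

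For $\Phi$, the plan is to apply Lemma~\ref{lemma:ff_on_fp_implies_conservative}. The target is cocomplete with colimits computed componentwise, because each $\Ind(\ca{A}_i)$ is cocomplete and the structure functors in the diagram $i \mapsto \Ind(\ca{A}_i)$, being $\Ind$ of right exact functors, are cocontinuous; consequently $\Phi$ is itself cocontinuous. Its restriction to the finitely presentable part $\lim \ca{A}_i$ sends $A$ to the tuple $(P_i A)_i$ with each $P_i A \in \ca{A}_i \subseteq \Ind(\ca{A}_i)$ finitely presentable. Since $\ca{I}$ is finite, a morphism in $\lim_{\CAT_R} \Ind(\ca{A}_i)$ out of such a tuple consists of finitely many components in finitely presentable-indexed Hom-sets, subject to finitely many compatibility equations; both conditions commute with filtered colimits of sets, so $\Phi$ preserves finitely presentable objects. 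Finally, this restriction factors as
\[
 \lim_{\Rex} \ca{A}_i \rightarrow \lim_{\CAT_R} \ca{A}_i \hookrightarrow \lim_{\CAT_R} \Ind(\ca{A}_i),
\]
in which the first arrow is an equivalence (the forgetful 2-functor $\Rex \rightarrow \CAT_R$ creates bicategorical limits) and the second is fully faithful, being the bicategorical limit of the fully faithful Yoneda embeddings $\ca{A}_i \hookrightarrow \Ind(\ca{A}_i)$. Lemma~\ref{lemma:ff_on_fp_implies_conservative} then yields conservativity of $\Phi$.

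The main technical point is the verification that $\Phi$ preserves finitely presentable objects, and it is here that the finiteness hypothesis on $\ca{I}$ enters essentially: it ensures that the compatibility constraints defining a morphism in the pseudocone limit involve only finitely many data and hence commute with filtered colimits of sets. Everything else is a routine assembly of the universal properties of $\Ind$ and of bicategorical limits in $\Rex$ and $\CAT_R$.
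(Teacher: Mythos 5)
Your proposal is correct and follows essentially the same route as the paper: factor $F$ through the limit $\lim_{\CAT_R}\Ind(\ca{A}_i)$, observe that the projection to $\prod_{i\in\ca{I}_0}\Ind(\ca{A}_i)$ is conservative by the definition of an isomorphism-detecting set, and apply Lemma~\ref{lemma:ff_on_fp_implies_conservative} to the comparison functor after checking cocontinuity, fully faithfulness on finitely presentable objects, and preservation of finitely presentable objects. Your identification of the finiteness of $\ca{I}$ as the essential input for the last of these (finitely many components and compatibility constraints commuting with filtered colimits) is exactly the point the paper's proof makes as well.
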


\begin{proof}
 Let $\ca{C}_i =\Ind(\ca{A}_i)$. The functor in question factors as the composite of the functor $L \colon \Ind(\lim \ca{A}_i) \rightarrow \lim(\ca{C}_i)$, followed by the functor induced by the canonical projections $\lim \ca{C}_i \rightarrow \prod_{i \in \ca{I}_0} \ca{C}_i$. The latter is conservative since this limit is computed as in the 2-category $\CAT_R$ of (possibly large) $R$-linear categories (this is precisely the definition of an isomorphism detecting set $\ca{I}_0$).

 We claim that $L$ satisfies the conditions of Lemma~\ref{lemma:ff_on_fp_implies_conservative} above. Since limits in $\Rex$ are computed as in $\Cat_R$, the restriction of $L$ to $\Ind(\lim \ca{A}_i)_{\fp} \simeq \lim \ca{A}_i$ is fully faithful. It preserves colimits since it is built from cocontinuous functors. It only remains to check that it also preserves finitely presentable objects.

 This boils down to the statement that an object $(C_i,\kappa_{\varphi})$ of $\lim \ca{C}_i$ is finitely presentable if each $C_i$ is finitely presentable in $\ca{C}_i$. We need to check that any morphism from $(C_i,\kappa_{\varphi})$ to a filtered colimit $\colim_j \bigl(X_i(j),\kappa_{\varphi}(j) \bigl)$ factors through one of the stages, and that any two such factorizations become equal at a higher stage. 

 Since colimits in $\lim \ca{C}_i$ are computed levelwise, we find that for each $i$, the morphism $C_i \rightarrow \colim_j X_i(j)$ factors through one of the colimit stages $X_i(j)$. A priori, these factorizations might not constitute a morphism in $\lim \ca{C}_i$. But since the indexing category $\ca{I}$ is finite, we only need to check that a finite number of diagrams commute to establish this. Therefore we can achieve this by passing to a (common) higher stage $j^{\prime}$. Moreover, any two such factorizations become equal after passage to a possibly higher stage (we can find such a stage for each $\ca{C}_i$, and the claim again follows since $\ca{I}$ is finite).

 This shows that $L$ does indeed preserve finitely presentable objects. If follows from Lemma~\ref{lemma:ff_on_fp_implies_conservative} that $L$ is conservative.
\end{proof}

\begin{lemma}\label{lemma:structural_morphisms_detect_epis}
 Let $\ca{I}$ be a finite category with isomorphism detecting set $\ca{I}_0 \subseteq \ca{I}$. Let $\ca{I} \rightarrow \ca{RM}$, $i \mapsto \ca{A}_i$ be a diagram such that each $\ca{A}_i$ is a weakly Tannakian category. Then the functor
\[
 T( \lim \ca{A}_i) \rightarrow \textstyle \prod \nolimits_{i \in \ca{I}_0} \ca{A}_i
\]
 detects epimorphisms.
\end{lemma}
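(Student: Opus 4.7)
The plan is to reduce the assertion to Proposition~\ref{prop:general_locally_split_criteria} applied to the conservative tensor functor of Lemma~\ref{lemma:ind_lim_to_prod_ind_conservative}. Write $\ca{B} \defl \lim \ca{A}_i$, let $f_i^{\ast} \colon T(\ca{B}) \to \ca{A}_i$ denote the structural tensor functors, and let $\pi_i \colon \ca{B} \to \ca{A}_i$ be the canonical projections. Since $T(\ca{B})$ is weakly Tannakian, $\Ind(T(\ca{B}))$ is abelian, so a morphism in $T(\ca{B})$ is an epimorphism iff its cokernel vanishes. As each $f_i^{\ast}$ preserves cokernels, it suffices to prove: if $C \in T(\ca{B})$ satisfies $f_i^{\ast}(C) \cong 0$ for all $i \in \ca{I}_0$, then $C \cong 0$. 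Every such $C$ can be presented as $C \cong \mathrm{coker}(q \colon M \to N)$ with $M, N \in \LF^c_{T(\ca{B})}$ (weakly Tannakian categories are generated by objects with duals and are finitely cocomplete), so the task becomes showing that $q$ is an epimorphism in $\Ind(T(\ca{B}))$.

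In any weakly Tannakian category, an epimorphism between locally free objects of constant rank is automatically a locally split epimorphism: apply Proposition~\ref{prop:general_locally_split_criteria} to the covering $\Ind(\ca{A}_i) \to \Mod_{B_i}$ (which is conservative, being faithful and exact) and observe that an epimorphism onto a free module of constant rank always splits. Transport $q$ across the equivalence $\LF^c_{T(\ca{B})} \simeq \LF^c_{\ca{B}}$ of \cite[Proposition~5.5.1]{SCHAEPPI_COLIMITS} to obtain a morphism $\tilde q$ in $\LF^c_\ca{B} \subset \Ind(\ca{B})$ with $f_i^{\ast}(q) = \pi_i(\tilde q)$. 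By the construction of $T$ via the class $\Sigma$ of locally split right exact sequences, $q$ is locally split epi in $\Ind(T(\ca{B}))$ iff $\tilde q$ is locally split epi in $\Ind(\ca{B})$; combined with the preceding observation, the hypothesis then forces each $\pi_i(\tilde q)$ to be a locally split epimorphism in $\Ind(\ca{A}_i)$ for every $i \in \ca{I}_0$.

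Finally, apply Proposition~\ref{prop:general_locally_split_criteria} to the conservative tensor functor $\Ind(\ca{B}) \to \prod_{i \in \ca{I}_0} \Ind(\ca{A}_i)$ supplied by Lemma~\ref{lemma:ind_lim_to_prod_ind_conservative}: since the image of $\tilde q$ is componentwise locally split, $\tilde q$ itself is locally split in $\Ind(\ca{B})$, and transferring back gives that $q$ is locally split epi (in particular, an epimorphism) in $\Ind(T(\ca{B}))$, so $C \cong 0$. The main obstacle I anticipate is the identification of the locally-split property for morphisms between locally free objects of constant rank across the equivalence $\LF^c_{T(\ca{B})} \simeq \LF^c_{\ca{B}}$: this requires matching up the witnessing faithfully flat Adams algebras in $\Ind(T(\ca{B}))$ with those in $\Ind(\ca{B})$, using the universal property built into the construction of $T$ together with the Adams algebra machinery of \cite{SCHAEPPI_COLIMITS}.
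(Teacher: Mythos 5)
Your argument is correct and follows essentially the same route as the paper's proof: reduce to detecting epimorphisms between objects of the generating subcategory $\LF^c$, observe that an epimorphism between locally free objects of constant rank in a geometric tensor category is automatically locally split, and transport this back along the conservative tensor functor of Lemma~\ref{lemma:ind_lim_to_prod_ind_conservative} using Proposition~\ref{prop:general_locally_split_criteria}. The step you flag as the main obstacle is handled in the paper by citing \cite[Proposition~5.4.13]{SCHAEPPI_COLIMITS}, and in fact you only need one implication --- that $\tilde q$ locally split in $\Ind(\lim\ca{A}_i)$ forces $q$ to be an epimorphism in $T(\lim\ca{A}_i)$ --- which follows directly from the definition of the class $\Sigma$ (the cokernel diagram $M \rightarrow N \rightarrow 0$ is then locally split right exact, hence lies in $\Sigma$), so no matching of witnessing Adams algebras across the equivalence $\LF^c_{T(\lim\ca{A}_i)} \simeq \LF^c_{\lim\ca{A}_i}$ is actually required.
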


\begin{proof}
 From the definition of universal weakly Tannakian categories (see \S \ref{section:background_universal} and \cite[Definition~5.6.3]{SCHAEPPI_COLIMITS}) and the fact that the universal weakly Tannakian category gives a coreflection (see \cite[Theorem~5.6.4]{SCHAEPPI_COLIMITS}) it follows that the statement is equivalent to the claim that the composite
\[
 \xymatrix{ G(\LF^c_{\lim \ca{A}_i}) \ar[r]^-{E} & \Ind(\lim \ca{A}_i) \ar[r]^{F} & \textstyle\prod\nolimits_{i \in \ca{I}} \ca{A}_i }
\]
 (where $E$ denotes the functor from \cite[Remark~5.4.5]{SCHAEPPI_COLIMITS} and $F$ denotes the functor from Lemma~\ref{lemma:ind_lim_to_prod_ind_conservative}) detects epimorphisms between finitely presentable objects.

 We first claim that the composite detects epimorphisms between objects of the generating subcategory $\LF^c_{\lim \ca{A}_i}$ of $G(\LF^c_{\lim \ca{A}_i})$. By \cite[Proposition~5.4.13]{SCHAEPPI_COLIMITS}, this boils down to checking that a morphism $f \colon A \rightarrow B$ between locally free objects of constant finite rank in the (possibly non-abelian category) $\Ind(\lim \ca{A}_i)$ is a locally split epimorphism if $Ff$ is an epimorphism.

 Thus assume that $Ff$ is an epimorphism. Since a finite product of geometric tensor categories is clearly geometric, $Ff$ is an epimorphism between locally free objects in a geometric tensor category. It follows from \cite[Example~5.2.2]{SCHAEPPI_COLIMITS} that $Ff$ is locally split. Since $F$ is conservative (see Lemma~\ref{lemma:ind_lim_to_prod_ind_conservative}), it detects locally split epimorphisms (see Proposition~\ref{prop:general_locally_split_criteria}). This shows that the tensor functor $F \circ E$ does indeed detect epimorphisms between objects in $\LF^c_{\lim \ca{A}_i}$.

 The claim therefore follows from Lemma~\ref{lemma:epi_detection_in_lfp_abelian_cats} below (which is applicable since any tensor functor between geometric tensor categories preserves finitely presentable objects).
\end{proof}

\begin{lemma}\label{lemma:epi_detection_in_lfp_abelian_cats}
 Let $\ca{C}$ and $\ca{D}$ be lfp abelian categories, and let $\ca{A} \subseteq \ca{C}_{\fp}$ be a generator which is closed under finite direct sums. Suppose that $L \colon \ca{D} \rightarrow \ca{D}$ is a cocontinuous functor which preserves finitely presentable objects. Then the following conditions are equivalent
\begin{enumerate}
 \item[(i)] The functor $L$ detects epimorphisms in $\ca{C}$ whose domain and codomain lie in $\ca{A}$;
 \item[(ii)] If $C \in \ca{C}$ is a finitely generated object (that is, an object admitting an epimorphism $C_0 \rightarrow C$ in $\ca{C}$ for some $C_0 \in \ca{C}_{\fp}$), then $LC \cong 0$ implies $C \cong 0$;
\item[(iii)] If $C_0 \in \ca{C}_{\fp}$ and $LC_0 \cong 0$, then $C_0 \cong 0$.
\item[(iv)] The functor $L$ detects epimorphisms in $\ca{C}$ whose domain and codomain lie in $\ca{C}_{\fp}$.
\end{enumerate}
\end{lemma}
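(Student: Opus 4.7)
The plan is to establish (i) $\Leftrightarrow$ (iii) $\Leftrightarrow$ (iv) first, and then (ii) $\Leftrightarrow$ (iii). The implications (iv) $\Rightarrow$ (i) and (ii) $\Rightarrow$ (iii) are immediate: the former since $\ca{A} \subseteq \ca{C}_{\fp}$, the latter since every finitely presentable object is finitely generated. For (iii) $\Rightarrow$ (iv), given $f \colon X \to Y$ in $\ca{C}_{\fp}$ with $Lf$ an epimorphism, the cokernel $\operatorname{coker} f$ lies in $\ca{C}_{\fp}$, and right exactness of $L$ yields $L(\operatorname{coker} f) \cong \operatorname{coker}(Lf) = 0$; so (iii) forces $\operatorname{coker} f = 0$, i.e.\ $f$ is an epimorphism.

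For (i) $\Rightarrow$ (iii) the key is that every $C_0 \in \ca{C}_{\fp}$ admits a two-term resolution by objects of $\ca{A}$. Since $\ca{A}$ is a generator, there is an epimorphism $\bigoplus_{i \in I} A_i \twoheadrightarrow C_0$ with $A_i \in \ca{A}$; finite presentability of $C_0$ allows us to pass to a finite sub-direct-sum, which then lies in $\ca{A}$ by closure under finite direct sums, giving an epi $\pi \colon A \twoheadrightarrow C_0$ with $A \in \ca{A}$. The kernel $K = \ker \pi$ is finitely generated (a standard fact for the kernel of an epimorphism between finitely presentable objects in an lfp abelian category), so the same reasoning produces an epi $A' \twoheadrightarrow K$ with $A' \in \ca{A}$. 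Composition yields an exact sequence $A' \stackrel{f}{\to} A \to C_0 \to 0$ with $\operatorname{coker} f \cong C_0$. Applying the right exact functor $L$ and using $LC_0 = 0$ shows that $Lf$ is an epimorphism, so (i) forces $f$ to be an epimorphism, and therefore $C_0 \cong 0$.

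For (iii) $\Rightarrow$ (ii), let $C$ be finitely generated with $LC \cong 0$. Fix an epi $C_0 \twoheadrightarrow C$ with $C_0 \in \ca{C}_{\fp}$ and write its kernel as the filtered union $K = \bigcup_J K_J$ of its finitely generated subobjects. Each quotient $C_J \defl C_0/K_J$ then lies in $\ca{C}_{\fp}$ (a quotient of a finitely presentable object by a finitely generated subobject is finitely presentable), the transition maps $\tau_{J,J'} \colon C_J \twoheadrightarrow C_{J'}$ are epimorphisms, and $C \cong \colim_J C_J$. Since $L$ is cocontinuous and preserves finite presentability, we obtain a filtered colimit $\colim_J L(C_J) \cong LC = 0$ of finitely presentable objects with epimorphic transition maps. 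For any fixed $J$, finite presentability of $L(C_J)$ means $\Hom_{\ca{D}}(L(C_J),-)$ commutes with filtered colimits, so the vanishing of the structure map $L(C_J) \to 0$ implies that some transition $L(\tau_{J,J'}) \colon L(C_J) \to L(C_{J'})$ is already zero. But this transition is also an epimorphism, forcing $L(C_{J'}) = 0$. Then (iii) gives $C_{J'} = 0$, and since $C_{J'} \twoheadrightarrow C$ is epi we conclude $C = 0$.

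The main obstacle is (iii) $\Rightarrow$ (ii): finite generation of $C$ does not place it inside $\ca{C}_{\fp}$, so (iii) cannot be applied to $C$ directly. The filtered-colimit-with-epimorphic-transitions device above is what bridges the gap, by approximating $C$ by finitely presentable quotients of $C_0$ and using finite presentability inside $\ca{D}$ to detect when the colimit becomes zero at a single stage.
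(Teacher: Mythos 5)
Your proof is correct, but it routes the implications differently from the paper. The paper closes the cycle $(i) \Rightarrow (ii) \Rightarrow (iii) \Rightarrow (iv) \Rightarrow (i)$, and its only substantive step is $(i) \Rightarrow (ii)$: for a finitely generated $C$ with $LC \cong 0$ it takes an arbitrary (possibly infinite) presentation $\bigoplus_{i \in I} A_i \rightarrow A \rightarrow C \rightarrow 0$ with $A, A_i \in \ca{A}$, applies $L$, and uses the fact that $LA$ is finitely presentable in the lfp abelian category $\ca{D}$ to extract a \emph{finite} subfamily $J \subseteq I$ with $\bigoplus_{j \in J} LA_j \rightarrow LA$ epic; condition (i) then transports this back to $\ca{C}$ and forces $C \cong 0$. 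You instead prove $(i) \Rightarrow (iii)$ via a finite two-term resolution $A' \rightarrow A \rightarrow C_0 \rightarrow 0$ by objects of $\ca{A}$ (which needs the standard fact that the kernel of an epimorphism between finitely presentable objects is finitely generated), and then bridge from (iii) to (ii) by writing $C$ as a filtered colimit of finitely presentable quotients $C_J = C_0 / K_J$ with epimorphic transitions and using finite presentability of $L(C_J)$ to detect that the colimit vanishes at a single stage. Both arguments ultimately lean on the same finiteness principle in $\ca{D}$ (that $L$ lands in finitely presentable objects), just deployed at different places; the paper's version is shorter because the infinite presentation lets it treat finitely generated objects in one pass, whereas yours stays with finite presentations at the cost of two auxiliary standard facts (finitely generated kernels, and that a quotient of a finitely presentable object by a finitely generated subobject is finitely presentable) and the extra colimit bootstrap. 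Your individual steps all check out, including the observation that a zero structure map out of a finitely presentable object into a filtered colimit forces some transition map to vanish, and that this transition map is simultaneously an epimorphism.
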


\begin{proof}
 The implications $(ii) \Rightarrow (iii)$ and $(iv) \Rightarrow (i)$ are immediate. To see that $(i) \Rightarrow (ii)$, let $C$ be a finitely generated object such that $LC \cong 0$. Since $\ca{A}$ is a generator which is closed under finite direct sums, we can find an object $A \in \ca{A}$ and an epimorphism $f \colon A \rightarrow C$ in $\ca{C}$. We need to show that $C\cong 0$. To see this, take any right exact sequence
\[
 \xymatrix{\textstyle \bigoplus\nolimits_{i \in I} A_i \ar[r] & A \ar[r]^-{f} & C \ar[r] & 0 }
\]
 where $A_i \in \ca{A}$ and $I$ is a (possibly infinite) indexing set. Then $L$ sends this to the right exact sequence
\[
 \xymatrix{ \textstyle \bigoplus\nolimits_{i \in I} LA_i \ar[r] & LA \ar[r] & 0}
\]
 in $\ca{D}$. Since $LA$ is finitely presentable and $\ca{D}$ is a lfp abelian category, there exists a finite set $J \subseteq I$ such that $\bigoplus_{j \in J } LA_j \rightarrow LA$ is an epimorphim. By assumption on $L$, this implies that $\bigoplus_{j \in J } A_j \rightarrow A$ (and therefore $\bigoplus_{i \in I} A_i \rightarrow A$) is an epimorphism in $\ca{C}$. It follows that $C\cong 0$, as claimed.

 It only remains to check that $(iii) \Rightarrow (iv)$. Thus let $f \colon X \rightarrow Y$ be a morphism in $\ca{C}_{\fp}$ such that $Lf$ is an epimorphism in $\ca{D}$. Let $K$ be the cokernel of $f$. Then $K$ is finitely presentable since $\ca{C}_{\fp}$ is closed under finite colimits and $LK \cong 0$, so it follows that $K \cong 0$. Thus $f$ is indeed an epimorphism, as claimed.
\end{proof}

 With this in hand, we can now prove Theorem~\ref{thm:colimit_recognition}.

\begin{proof}[Proof of Theorem~\ref{thm:colimit_recognition}]
 We first show the second statement. The comparison functor
\[
 \VB^{c}(W) \rightarrow \lim \VB^{c}(X_i)
\]
 is an equivalence by Corollary~\ref{cor:vector_bundles_on_colimits} and the functor
\[
 \QCoh_{\fp}(W) \rightarrow \textstyle \prod_{i \in \ca{I}_0} \QCoh_{\fp}(X_i)
\]
 detects epimorphisms by Lemma~\ref{lemma:structural_morphisms_detect_epis}.

 It remains to check the converse: if the cocone $(f_i, \kappa_\varphi)$ induces an equivalence with the limit after applying $\VB^c(-)$ and if the $f_i^{\ast} \colon \QCoh_{\fp}(W) \rightarrow \QCoh_{\fp}(X_i)$ jointly detect epimorphisms, then the induced morphism $f \colon \colim X_i \rightarrow W$ is an equivalence of Adams stacks. By generalized Tannaka duality it suffices to check that the corresponding functor
\[
 f^{\ast} \colon \QCoh(W) \rightarrow \QCoh(\colim X_i)
\]
 is an equivalence. 

 Assumption~(i) and Corollary~\ref{cor:vector_bundles_on_colimits} imply that the restriction of $f^{\ast}$ induces an equivalence $\VB^c(W) \rightarrow \VB^c(\colim X_i)$.

 Let $\Theta$ be the class of morphisms in $\VB^c(W)$ which are epimorphisms in the category $\QCoh(W)$ (equivalently, in $\QCoh_{\fp}(W)$), and let $\Theta^{\prime}$ be the class of morphisms in $\VB^c(\colim X_i)$ which are epimorphisms in $\QCoh(\colim X_i)$. Since $f^{\ast}$ is a left adjoint, we have $f^{\ast}(\Theta) \subseteq \Theta^{\prime}$. Conversely, if $p$ is a morphism in $\VB^{c}(W)$ such that $f^{\ast}(p) \in \Theta^{\prime}$, then (by compsing with the inverse image functor of the structure morphism $X_i \rightarrow \colim X_i$) we find that $f_i^{\ast}(p)$ is an epimorphism for all $i \in \ca{I}_0$. Assumption~(ii) therefore implies that $p$ is an epimorphism in $\QCoh_{\fp}(W)$, that is, $p \in \Theta$.

 The claim that $f^{\ast} \colon \QCoh(W) \rightarrow \QCoh(\colim X_i)$ is an equivalence now follows from \cite[Corollary~3.4.3]{SCHAEPPI_COLIMITS}. 
\end{proof}

 Theorem~\ref{thm:pushout_recognition} is a special case of Theorem~\ref{thm:colimit_recognition}.

\begin{proof}[Proof of Theorem~\ref{thm:pushout_recognition}]
 The indexing category for pushouts
\[
 \xymatrix{0 & \ar[l] 1 \ar[r]& 2}
\]
 is finite and connected. As mentioned at the beginnig of this section, the set $\{0,2\}$ is an isomorphism-detecting set in its opposite category (the indexing category for pullbacks). Theorem~\ref{thm:colimit_recognition}, applied to this indexing category, gives the statement of Theorem~\ref{thm:pushout_recognition}: Condition~(ii) of the former is equivalent to Condition~(ii) of the latter by Lemma~\ref{lemma:epi_detection_in_lfp_abelian_cats}. 
\end{proof}

 We can also use the above lemmas to simplify the description of the category of quasi-coherent sheaves on a finite colimit of Adams stacks with connected indexing category. Often one of the least tractable parts in the definition of the geometric tensor category $G(\ca{A},\ca{C})$ is the class $\Sigma$ of locally split right exact sequences in $\ca{C}$ with entries in $\ca{A}$. The following proposition shows that there is a simpler description of this class $\Sigma$ if the category $\ca{C}$ is ``not too far'' from geometric tensor categories.

\begin{prop}\label{prop:epi_implies_locally_split_criterion}
 Let $\ca{C}$ be a lfp tensor category over $R$. Suppose there exists a family of tensor functors $F_i \colon \ca{C} \rightarrow \ca{C}_i$ which is jointly conservative and such that each $\ca{C}_i$ is geometric. Then each epimorphism $f \colon A \rightarrow A^{\prime}$ in $\ca{C}$ between locally free objects of constant finite rank is a locally split epimorphism. Moreover, each cokernel diagram in $\ca{C}$ consisting of locally free objects of constant finite rank is a locally split right exact sequence.
\end{prop}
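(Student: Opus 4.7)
Plan: The strategy is to apply Proposition~\ref{prop:general_locally_split_criteria} after combining the family $(F_i)$ into a single conservative tensor functor $F \defl (F_i) \colon \ca{C} \to \prod_i \ca{C}_i$. The product $\prod_i \ca{C}_i$ is a tensor category with exact filtered colimits and equalizers (both computed componentwise), and $F$ is conservative by joint conservativity of the family. Given an epimorphism $f \colon A \to A'$ between locally free objects of constant finite rank, each $F_i(f)$ is an epimorphism (each $F_i$ is a left adjoint) in the geometric tensor category $\ca{C}_i$, hence a locally split epimorphism by \cite[Example~5.2.2]{SCHAEPPI_COLIMITS}. Assembling componentwise faithfully flat splittings into a single faithfully flat algebra $(B_i) \in \prod_i \ca{C}_i$ exhibits $F(f)$ as locally split in the product, and Proposition~\ref{prop:general_locally_split_criteria} applied to $F$ yields the first claim.

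For the second claim, let $U \xrightarrow{p} V \xrightarrow{q} W \to 0$ be a cokernel diagram with each entry locally free of constant finite rank. The first claim shows that $q$ is locally split, and Proposition~\ref{prop:general_locally_split_criteria} produces a finite tensor product $B$ of Adams algebras in $\ca{C}$ with $q_B$ split. Writing $V_B \cong W_B \oplus K_B$, where $K \defl \ker q$ (whose base change is preserved by the flat algebra $B$), one deduces that $K$ is itself locally free of constant finite rank, and that $F_i(K) \cong \ker(F_i q)$ in $\ca{C}_i$; here one uses that each $F_i$ preserves the Adams algebra $B$ (so that $F_i(B)$ is faithfully flat in $\ca{C}_i$) as well as the direct sum decomposition. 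Applying each $F_i$ to the original cokernel diagram yields a cokernel diagram in the abelian category $\ca{C}_i$, so the induced morphism $F_i(U) \to F_i(K)$ is an epimorphism. Since cokernels are preserved by each $F_i$ and the family is jointly conservative, the cokernel of $U \to K$ vanishes in $\ca{C}$: thus $U \to K$ is an epimorphism between locally free objects of constant finite rank, and the first claim gives that it is locally split.

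The main obstacle is the verification that $K$ is locally free of constant finite rank and that its formation is preserved by each $F_i$. Since tensor functors need not preserve kernels, this requires the extra structure supplied by the Adams algebra splitting: the decomposition $V_B \cong W_B \oplus K_B$ exhibits $K$ as a direct summand of a locally free object after faithfully flat base change, and the preservation of Adams algebras under tensor functors propagates this decomposition to each $\ca{C}_i$. The remaining rank argument uses the $\Rep(\mathrm{GL}_d)_{\fp}$-classification of locally free objects recalled in Lemma~\ref{lemma:rank_d_groupoid}.
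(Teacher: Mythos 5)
Your proposal matches the paper's proof in both structure and substance: the first claim is obtained exactly as in the paper by assembling the $F_i$ into a single conservative tensor functor to $\prod_i \ca{C}_i$, invoking \cite[Example~5.2.2]{SCHAEPPI_COLIMITS} in each geometric factor, and applying Proposition~\ref{prop:general_locally_split_criteria}; the second claim is likewise reduced to showing that the comparison map onto the kernel $K$ of $q$ is an epimorphism, checked after each $F_i$ via the finite tensor product of Adams algebras that splits $q$ (so that $F_i K \cong \ker(F_i q)$) and joint conservativity. The only cosmetic difference is that the paper simply cites \cite[Lemma~5.4.10]{SCHAEPPI_COLIMITS} for the fact that $K$ is locally free of constant finite rank, where you sketch the underlying splitting argument yourself.
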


\begin{proof}
 The assumption implies that
\[
 (F_i)_{i \in I} \colon \ca{C} \rightarrow \textstyle\prod\nolimits_{i \in I} \ca{C}_i
\]
 is conservative, and while $\prod_{i \in I} \ca{C}_i$ might not be lfp, it does have all limits and exact filtered colimits. From Proposition~\ref{prop:general_locally_split_criteria} it follows that $f$ is a locally split epimorphism if and only if $(F_i f)_{i \in I}$ is a locally split epimorphism in $\prod_{i \in I} \ca{C}_i$, which is the case if each $F_i f$ is a locally split epimorphism. Since $F_i f$ preserves objects with duals, the claim follows from the fact that any epimorphism between objects with duals in a \emph{geometric} tensor category is locally split (see \cite[Example~5.2.2]{SCHAEPPI_COLIMITS}).

 It remains to check that each cokernel diagram
\[
 \xymatrix{L \ar[r]^{p} & M \ar[r]^{q} & N }
\]
 in $\ca{C}$ where $L$, $M$, and $N$ are locally free of constant finite rank is a locally split right exact sequence. By definition of such sequences we need to check that both $q$ and the induced morphism $p^{\prime} \colon L \rightarrow K$ is a locally split epimorphism, where $K$ denotes the kernel of $q$. By \cite[Lemma~5.4.10]{SCHAEPPI_COLIMITS}, $K$ is locally free. By what we just proved, it therefore suffices to check that the comparison morphism $p^{\prime}$ is an epimorphism in $\ca{C}$.

 First note that the sequence
\[
 \xymatrix{0 \ar[r] & F_i K \ar[r] & F_i M \ar[r]^-{F_i q} & F_i N \ar[r] & 0 }
\]
 is exact in $\ca{C}_i$ for each $i \in I$. Indeed, this follows from the fact that there exists a finite tensor product of Adams algebras $A=\bigotimes_{i=1}^n A_i$ such that $q_A$ is a split epimorphism (see Proposition~\ref{prop:epi_implies_locally_split_criterion}): for any such $A$, the above sequence is sent to a split exact sequence by the conservative functor $(-)_{F_i A}$. Since $\ca{C}_i$ is abelian it follows that $F_i p^{\prime} \colon F_i L \rightarrow F_i K$ is an epimorphism.

 The claim that $p^{\prime}$ is an epimorphism in $\ca{C}$ therefore follows from the fact that the cocontinuous functors $F_i$ are jointly conservative.
\end{proof}

Combining this proposition with Lemma~\ref{lemma:ind_lim_to_prod_ind_conservative}, we get the following description of the category of quasi-coherent sheaves on a finite colimit in $\ca{AS}$ with connected indexing type.

\begin{prop}
 Let $\ca{I}$ be a connected finite category and let $\ca{I}_0 \subseteq \ca{I}$ be an isomorphism detecting set of $\ca{I}^{\op}$. Let $X \colon \ca{I} \rightarrow \ca{AS}$ be a diagram with $X_i \neq \varnothing$. Let
\[
\ca{A}=\lim \VB^c(X_i) 
\]
 and write $F_i \colon \ca{A} \rightarrow \VB^c(X_i)$ for the structure morphisms of this limit. Finally, let $\Sigma$ be the class of cokernel diagrams in $\ca{A}$ which are sent to a cokernel diagram in $\QCoh_{\fp}(X_i)$ by each $F_i$, $i \in \ca{I}_0$. Then there is an equivalence
\[
 \QCoh(\colim X_i) \simeq \Lex_{\Sigma}[\ca{A}^{\op},\Mod_R]
\]
 of tensor categories.
\end{prop}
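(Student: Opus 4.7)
The plan is to realize both sides as the universal geometric tensor category $G(\LF^c_\ca{B})$, where $\ca{B} \defl \lim_{\ca{I}^{\op}} \QCoh_{\fp}(X_i)$ is the bicategorical limit taken in $\ca{RM}$. By generalized Tannaka duality (see \S\ref{section:background_tannaka}), $\QCoh_{\fp}(\colim X_i) \simeq T(\ca{B})$, and passing to ind-objects yields $\QCoh(\colim X_i) \simeq G(\LF^c_\ca{B}) = \Lex_{\Sigma'}[(\LF^c_\ca{B})^{\op}, \Mod_R]$, where $\Sigma'$ is the class of cokernel diagrams in $\LF^c_\ca{B}$ which become locally split right exact in $\Ind(\ca{B})$. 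Because $\ca{I}^{\op}$ is connected and each $\QCoh_{\fp}(X_i)$ is nontrivial (since $X_i \neq \varnothing$), Corollary~\ref{cor:lf_preserves_connected_limits} supplies an equivalence $\LF^c_\ca{B} \simeq \lim \VB^c(X_i) = \ca{A}$, and it suffices to verify that $\Sigma' = \Sigma$ under this identification.

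First I would argue that the ``locally split right exact'' requirement in $\Sigma'$ is automatic for sequences with entries in $\LF^c_\ca{B}$. Applying Lemma~\ref{lemma:ind_lim_to_prod_ind_conservative} to the underlying diagram $\ca{I}^{\op} \to \Rex$ yields a jointly conservative family of tensor functors $\Ind(\ca{B}) \to \QCoh(X_i)$ indexed by $i \in \ca{I}_0$, each target being a geometric tensor category. Proposition~\ref{prop:epi_implies_locally_split_criterion} then applies to $\Ind(\ca{B})$ and forces every cokernel diagram in $\Ind(\ca{B})$ with entries in $\LF^c_\ca{B}$ to be locally split right exact. Hence $\Sigma'$ collapses to the class of sequences in $\LF^c_\ca{B}$ that are cokernel diagrams in $\ca{B}$ (equivalently in $\Ind(\ca{B})$, since $\ca{B} \hookrightarrow \Ind(\ca{B})$ is fully faithful and preserves finite colimits).

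Next, I would identify this reduced class with $\Sigma$. Because $\ca{B}$ is the limit in $\ca{RM}$ of right exact categories, its finite colimits are created componentwise by the structural functors $\ca{B} \to \QCoh_{\fp}(X_i)$ for every $i \in \ca{I}$. Given a sequence $(L_i) \to (M_i) \to (N_i)$ in $\LF^c_\ca{B}$, form the levelwise cokernel $(N'_i)$ in $\ca{B}$; the induced comparison $(N'_i) \to (N_i)$ is a morphism of $\ca{B}$ whose component at each $i \in \ca{I}_0$ is invertible precisely when the original sequence is sent to a cokernel in $\QCoh_{\fp}(X_i)$ for $i \in \ca{I}_0$. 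The iso-detection property of $\ca{I}_0 \subseteq \ca{I}^{\op}$, applied to the underlying diagram $\ca{I}^{\op} \to \CAT_R$, promotes this to an isomorphism $(N'_i) \to (N_i)$ in $\ca{B}$, so the original sequence is already a cokernel in $\ca{B}$. The converse is immediate because each structural functor is right exact. Combining the two steps gives $\Sigma = \Sigma'$, and the claimed equivalence of tensor categories follows.

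The main obstacle I anticipate is in the third paragraph: making the iso-detection argument fully rigorous requires checking that the comparison $(N'_i) \to (N_i)$ genuinely lifts to a morphism in the bicategorical limit $\ca{B}$, i.e.\ commutes with all the structural 2-cells $\kappa_\varphi$ attached to $(N'_i)$ and $(N_i)$. This is a routine but somewhat tedious coherence verification relying on the naturality of the cokernel construction in $\ca{RM}$ and on the fact that the structural isomorphisms on $(N'_i)$ are themselves induced from those on $(L_i)$ and $(M_i)$.
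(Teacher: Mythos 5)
Your proposal is correct and follows essentially the same route as the paper: identify $\QCoh(\colim X_i)$ with $G(\LF^c_{\ca{B}})$ via Theorem~5.6.4 of \cite{SCHAEPPI_COLIMITS} and Corollary~\ref{cor:lf_preserves_connected_limits}, then show the two classes of sequences coincide using the jointly conservative family from Lemma~\ref{lemma:ind_lim_to_prod_ind_conservative} together with Proposition~\ref{prop:epi_implies_locally_split_criterion}. The coherence worry in your last paragraph is a non-issue: since $\ca{B}$ is a limit in $\Rex$ it is itself finitely cocomplete with cokernels computed levelwise, so the comparison $(N_i') \rightarrow (N_i)$ is a morphism of $\ca{B}$ by the universal property of the cokernel in $\ca{B}$, with no further compatibility with the structural $2$-cells to check.
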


\begin{proof}
 From \cite[Theorem~5.6.4]{SCHAEPPI_COLIMITS} and Proposition~\ref{cor:lf_preserves_connected_limits} we know that there is an equivalence
\[
 \QCoh(\colim X_i) \simeq G(\LF^c_{\ca{A}})= \Lex_{\Sigma_0}[\ca{A}^{\op},\Mod_R]
\]
 of tensor categories, where $\Sigma_0$ denotes the class of cokernel diagrams in $\ca{A}$ that are locally split right exact sequences in $\Ind\bigl(\lim \QCoh_{\fp}(X_i)\bigr)$. This reduces the problem to checking that $\Sigma_0=\Sigma$.

 From Lemma~\ref{lemma:ind_lim_to_prod_ind_conservative} we know that the tensor functors 
\[
F_i^{\ast} \colon \Ind\bigl(\lim \QCoh(X_i) \bigr) \rightarrow \QCoh(X_i) 
\]
 induced by the functors $F_i$ with $i \in \ca{I}_0$ are jointly conservative. From this it follows that $\Sigma$ is precisely the class of cokernel diagrams in $\Ind\bigl(\lim \QCoh(X_i) \bigr)$ with entries in $\ca{A}$ (recall that the $F_i^{\ast}$ are in particular left adjoints, so they jointly detect coequalizer diagrams). But each of these is a locally split right exact sequence by Proposition~\ref{prop:epi_implies_locally_split_criterion}. This shows that $\Sigma \subseteq \Sigma_0$. The converse inclusion is immediate from the fact that the functors $F_i^{\ast}$ induced by the $F_i$ are left adjoints.
\end{proof}

\section{Applications}\label{section:applications}

\subsection{Infinite affine coproducts}

 In this section we compute a few colimit diagrams in the 2-category of Adams stacks explicitly. In all cases, we will start with a diagram of affine schemes and we will show that the colimit in question is again an affine scheme. To do this we will use the following proposition. Recall from \S \ref{section:background_tensor} that we call an object of a right exact symmetric monoidal category $\ca{A}$ \emph{locally free of rank $d \in \mathbb{N}$} (respectively \emph{locally free of constant finite rank}) if it is such considered as an object of $\Ind(\ca{A})$.

\begin{prop}\label{prop:affine_criterion}
 Let $\ca{A}$ be a right exact symmetric monoidal $R$-linear category. Suppose that all epimorphisms in $\ca{A}$ between locally free objects of constant finite rank are split and for all locally free objects $M \in \ca{A}$ of constant finite rank, there exists an epimorphism $\U^{\oplus n} \rightarrow M$ for some $n \in \mathbb{N}$.

 Then the associated weakly Tannakian category $T(\ca{A})$ is \emph{affine}: the canonical right exact symmetric strong monoidal functor
\[
 \Mod_{\End(\U)}^{\fp} \rightarrow T(\ca{A})
\]
 is an equivalence.
\end{prop}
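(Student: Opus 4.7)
The plan is to unfold the construction $T(\ca{A}) = G(\LF^c_{\ca{A}})_{\fp} = \Lex_\Sigma[(\LF^c_{\ca{A}})^{\op},\Mod_R]_{\fp}$ directly, using the two hypotheses to identify $G(\LF^c_{\ca{A}})$ with the tensor category $\Mod_{\End(\U)}$. Taking finitely presentable objects then yields $T(\ca{A}) \simeq \Mod_{\End(\U)}^{\fp}$, and since both this equivalence and the canonical functor of the statement are right exact symmetric monoidal $R$-linear and send the free $\End(\U)$-module of rank one to the unit, they must coincide.

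First I would show $\LF^c_{\ca{A}} \simeq \Mod_{\End(\U)}^{\fgp}$. Hypothesis~(2) provides, for each $M \in \LF^c_{\ca{A}}$, an epimorphism $\U^{\oplus n} \twoheadrightarrow M$, which hypothesis~(1) forces to split; hence $M$ is a direct summand of $\U^{\oplus n}$. Since $\Hom_{\ca{A}}(\U^{\oplus m},\U^{\oplus n}) \cong M_{n\times m}(\End(\U))$ with matrix composition, the full subcategory on the objects $\{\U^{\oplus n}\}_{n \in \mathbb{N}}$ is equivalent to the category of finitely generated free $\End(\U)$-modules, and $\LF^c_{\ca{A}}$ is its Karoubi envelope (any idempotent on $M$ extends to an idempotent on the ambient $\U^{\oplus n}$ and splits in $\LF^c_{\ca{A}}$ by the same argument applied to the summand). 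Next I would verify that the class $\Sigma$ contains every cokernel diagram $P \to Q \to R \to 0$ in $\LF^c_{\ca{A}}$: by~(1), $Q \to R$ splits, giving $Q \cong K \oplus R$ with $K \in \LF^c_{\ca{A}}$ (direct summands of locally free objects of constant finite rank are again such, cf.\ \cite[Lemma~5.4.10]{SCHAEPPI_COLIMITS}); the induced map $P \to K$ is then an epimorphism in $\ca{A}$ between objects of $\LF^c_{\ca{A}}$ and splits by~(1) as well, so the sequence is split (and a fortiori locally split) right exact in $\Ind(\ca{A})$.

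With $\Sigma$ unrestricted in this way, and with cokernel diagrams in $\Mod_{\End(\U)}^{\fgp}$ automatically split, $G(\LF^c_{\ca{A}}) \simeq \Lex_{\Sigma}[(\Mod_{\End(\U)}^{\fgp})^{\op},\Mod_R]$ reduces to the category of all $R$-linear additive functors on $(\Mod_{\End(\U)}^{\fgp})^{\op}$. A standard Yoneda/Morita argument (with $M \mapsto \Hom_{\End(\U)}(-,M)$ and inverse $F \mapsto F(\End(\U))$, the $\End(\U)$-module structure on $F(\End(\U))$ coming from the functoriality of $F$ applied to the endomorphisms of $\End(\U)$) identifies this with $\Mod_{\End(\U)}$, completing the argument. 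The main technical obstacle I foresee lies in the second step: one must verify carefully that the induced map $P \to K$ really is an epimorphism in $\ca{A}$ (not merely a set-theoretic surjection onto an image), so that hypothesis~(1) genuinely applies. This requires a short diagram-chase using the universal property of the cokernel $Q \to R$ together with the splitting $Q \cong K \oplus R$, and goes through without assuming $\ca{A}$ is abelian.
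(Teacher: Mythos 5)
Your argument is correct and takes essentially the same route as the paper's proof: the two hypotheses force every element of $\Sigma$ to be a \emph{split} right exact sequence (so the $\Lex_\Sigma$ condition is vacuous and $G(\LF^c_{\ca{A}})$ is the full presheaf category) and force every object of $\LF^c_{\ca{A}}$ to be a retract of a finite direct sum of copies of $\U$ (so that presheaf category is $\Mod_{\End(\U)}$ by Morita theory). The only imprecision is that $\LF^c_{\ca{A}}$ need not be idempotent-complete---an arbitrary direct summand of a constant-rank locally free object can fail to have constant rank---so it is merely a full subcategory of $\Mod^{\fgp}_{\End(\U)}$ containing the free modules (which still suffices for the Morita step), and in your splitting argument the membership $K \in \LF^c_{\ca{A}}$ should be justified by the fact that the kernel of a split epimorphism between two objects of $\LF^c_{\ca{A}}$ again lies in $\LF^c_{\ca{A}}$ (this is what \cite[Lemma~5.4.10]{SCHAEPPI_COLIMITS} provides) rather than by closure under arbitrary summands.
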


\begin{proof}
 By Definition of $T\ca{A}$, to prove the claim we need to show that the canonical tensor functor
\[
 \Mod_{\End(\U)} \rightarrow G(\LF^c_{\ca{A}})
\]
 is an equivalence. Recall that $G(\LF^c_{\ca{A}})$ is the category $\Lex_{\Sigma}[(\LF^c_{\ca{A}})^{\op},\Mod_R]$, where $\Sigma$ denotes the class of cokernel diagrams in $\LF^{c}_{\ca{A}}$ which are locally split right exact sequences in $\Ind(\ca{A})$. But any such cokernel diagram is by assumption a split right exact sequence. It follows that any $R$-linear presheaf
\[
 F \colon (\LF^c_{\ca{A}})^{\op} \rightarrow \Mod_R
\]
 sends cokernel diagrams in $\Sigma$ to kernel diagrams, that is, we have $G(\LF^c_{\ca{A}})=\mathcal{P}(\LF^{c}_{\ca{A}})$.

 From the second assumption it follows that all objects of $\LF^c_{\ca{A}}$ are direct summands of finite direct sums of the unit object $\U$. Thus the canonical functor
\[
 \Mod_{\End(\U)}=\Prs{\End(\U)} \rightarrow \mathcal{P}(\LF^c_{\ca{A}})
\]
 induced by the inclusion of the full subcategory on the one object $\U$ of $\LF^c_{\ca{A}}$ is an equivalence, as claimed.
\end{proof}

 Using this, we can try to compute infinite coproducts $\coprod \Spec(A_i)$ in the 2-category of Adams stacks. From the examples given by Bhatt (\cite[Examples~8.3 and 8.4]{BHATT}) we know that this coproduct will in general \emph{not} be affine. In both these examples, the dimensions of the $A_i$ do not have a uniform bound (in the second example, $\Spec(A_i)$ has non-trivial \'etale cohomology in a degree that grows with $i$). As we will see below, this is in some sense the only obstruction to $\coprod \Spec(A_i)$ being affine. More precisely, if the minimal number of generators $\mu_i(d)$ of locally free $A_i$-modules of rank $d$ is bounded for each $d$ (that is, $\mu_i(d) \leq \mu(d)$ for some $\mu(d) \in \mathbb{N}$ independent of $i$), then $\coprod_{i \in I} \Spec(A_i) \cong \Spec(\prod_{i \in I} A_i)$ in the 2-category of Adams stacks.

\begin{thm}\label{thm:infinite_affine_coproducts}
 Let $(A_i)_{i \in I}$ be a family of commutative $R$-algebras. If there exist constants $\mu(d) \in \mathbb{N}$ for all $d \in \mathbb{N}$ such that all locally free $A_i$-modules of rank $d$ can be generated by at most $\mu(d)$ elements, then the canonical comparison morphism
\[
 \textstyle\coprod\nolimits_{i \in I} \Spec(A_i) \rightarrow \Spec(\textstyle \prod\nolimits_{i \in I} A_i)
\]
 (where the domain denotes the bicategorical coproduct in the 2-category $\ca{AS}$ of Adams stacks) is an equivalence. In other words, for any Adams stack $X$, the canonical functor
\[
 X(\textstyle\prod\nolimits_{i \in I} A_i) \rightarrow \textstyle\prod\nolimits_{i \in I} X(A_i)
\]
 is an equivalence of groupoids.
\end{thm}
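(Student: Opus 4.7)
The plan is to apply Proposition~\ref{prop:affine_criterion} to the right exact symmetric monoidal $R$-linear category
\[
\ca{A} \defl \textstyle\prod\nolimits_{i \in I} \Mod_{A_i}^{\fp},
\]
which is the bicategorical product of the $\Mod_{A_i}^{\fp}$ in $\ca{RM}$, since as noted in \S\ref{section:background_limits} bicategorical limits in $\ca{RM}$ are created by the forgetful 2-functor to $\Cat_R$. Under the biadjunction between $\ca{RM}$ and the sub-2-category of weakly Tannakian categories, $T(\ca{A})$ is the product of the $\Mod_{A_i}^{\fp}$ in the latter 2-category, and under the Tannaka duality biequivalence this product corresponds to the bicategorical coproduct $\coprod_{i \in I} \Spec(A_i)$ in $\ca{AS}$. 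Since $\End_{\ca{A}}(\U) = \prod_{i \in I} A_i$, if Proposition~\ref{prop:affine_criterion} applies it will identify $T(\ca{A})$ with $\Mod_{\prod A_i}^{\fp} \simeq \QCoh_{\fp}(\Spec(\prod A_i))$, which is exactly what we want.

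The first step is to identify the locally free objects of constant finite rank in $\ca{A}$. By Lemma~\ref{lemma:rank_d_groupoid} combined with the universal property of the product, giving such an object of rank $d$ in $\ca{A}$ is the same as giving a family $(M_i)_{i \in I}$ in which each $M_i$ is a locally free (equivalently, finitely generated projective) $A_i$-module of constant rank $d$; morphisms, and therefore epimorphisms, in $\ca{A}$ are defined coordinate-wise.

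It then remains to verify the two hypotheses of Proposition~\ref{prop:affine_criterion}. For the splitting condition, an epimorphism $f=(f_i)\colon (M_i) \to (N_i)$ between locally free objects of rank $d$ in $\ca{A}$ is coordinate-wise a surjection onto the finitely generated projective $A_i$-module $N_i$; projectivity of $N_i$ produces an $A_i$-linear section of each $f_i$, and these assemble into a section of $f$ in $\ca{A}$. For the generation condition, given a locally free object $M=(M_i)$ of rank $d$, the uniform bound $\mu(d)$ from the hypothesis supplies surjections $A_i^{\oplus \mu(d)} \to M_i$ for every $i \in I$, which combine to an epimorphism $\U^{\oplus \mu(d)} \to M$ in $\ca{A}$. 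The verifications themselves are essentially immediate from classical facts about finitely generated projective modules together with the uniform bound $\mu(d)$; the step that requires the most care is tracking the chain of biequivalences that identifies $T(\ca{A})$ with the bicategorical coproduct $\coprod_{i \in I} \Spec(A_i)$ in $\ca{AS}$.
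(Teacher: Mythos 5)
Your proposal is correct and follows essentially the same route as the paper: identify $\QCoh_{\fp}$ of the coproduct with $T\bigl(\prod_{i\in I}\Mod^{\fp}_{A_i}\bigr)$ via Tannaka duality, use Lemma~\ref{lemma:rank_d_groupoid} to see that locally free objects of rank $d$ in the product are coordinate-wise locally free of rank $d$, and then verify the two hypotheses of Proposition~\ref{prop:affine_criterion} exactly as you do (splitting via coordinate-wise projectivity, generation via the uniform bound $\mu(d)$). No gaps.
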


\begin{proof}
 From \cite[Theorem~5.6.4]{SCHAEPPI_COLIMITS} and the equivalence between the 2-category of Adams stacks and the 2-category of weakly Tannakian categories we know that there is an equivalence
\[
 \QCoh_{\fp}\bigl(\textstyle\coprod\nolimits_{i \in I} \Spec(A_i) \bigr) \simeq T(\textstyle\prod\nolimits_{i \in I} \Mod^{\fp}_{A_i})
\]
 of weakly Tannakian categories. 

 The unit object of $\prod\nolimits_{i \in I} \Mod^{\fp}_{A_i}$ has endomorphism ring isomorphic to $\prod\nolimits_{i \in I} A_i$. Thus it suffices to show that the canonical right exact symmetric monoidal $R$-linear functor
\[
 \Mod^{\fp}_{\textstyle\prod\nolimits_{i \in I} A_i} \rightarrow T(\textstyle \prod\nolimits_{i \in I} \Mod^{\fp}_{A_i})
\]
 is an equivalence. This statement is precisely the conclusion of Proposition~\ref{prop:affine_criterion}, so we need to check that the two hypotheses of Proposition~\ref{prop:affine_criterion} are satisfied.

 From Lemma~\ref{lemma:rank_d_groupoid} we know that an object $(M_i)_{i \in I}$ of $\prod \Mod^{\fp}_{A_i}$ is locally free of rank $d$ if and only if all $M_i$ are locally free of rank $d$. In particular, each $M_i$ is projective in this case. Since epimorphisms in the product are precisely the collections of morphisms $(f_i)_{i \in I}$ such that $f_i$ is an epimorphism in $\Mod^{\fp}_{A_i}$, it follows that all epimorphisms between locally free objects of constant finite rank in $\prod \Mod^{\fp}_{A_i}$ are split.

 It remains to check that for all locally free objects $(M_i)_{i \in I}$ of rank $d \in \mathbb{N}$, there exists a natural number $n \in \mathbb{N}$ and an epimorphism $(A_i^n)_{i \in I} \rightarrow (M_i)_{i \in I}$ in $\prod \Mod^{\fp}_{A_i}$. By assumption, such an epimorphism exists if we take $n=\mu(d)$.
\end{proof}

\subsection{Pushouts of affine schemes}
 
 In this section we give two examples of diagrams
\[
 \xymatrix{A \ar[r] \ar[d] & B \ar[d] \\ C \ar[r] & D}
\]
 in the category of commutative $R$-algebras such that the corresponding diagram
\[
 \xymatrix{\Spec(D) \ar[d] \ar[r] & \Spec(C) \ar[d] \\ \Spec(B) \ar[r] & \Spec(A)}
\]
 is a (bicategorical) pushout diagram in the category of Adams stacks. The following example shows that this is not a tautology.

\begin{example}
 Let $A$ be a non-zero commutative $R$-algebra. Then the diagram
\[
 \xymatrix{\Spec(A \times A) \ar[r]^-{\Spec(\Delta)} \ar[d]_{\Spec(\Delta)} & \Spec(A ) \ar[d]^{\id} \\
 \Spec(A) \ar[r]_-{\id} & \Spec(A) }
\]
 is a pushout diagram in the category of all schemes, but it is \emph{not} a (bicategorical) pushout diagram in the 2-category $\ca{AS}$ of Adams stacks. Indeed, the bicategorical pushout $P$ in $\ca{AS}$ has a universal property dual to that of the inertia stack: the groupoid of morphisms $P \rightarrow X$ is equivalent to the groupoid of $A$-points $x \in X(A)$ equipped with an automorphism $\xi \colon x \rightarrow x$. On the other hand, morphisms $\Spec(A) \rightarrow X$ simply correspond to $A$-points of $X$.
\end{example}

 Our first example follows from the Beauville--Laszlo theorem and Theorem~\ref{thm:pushout_recognition}.

\begin{thm}\label{thm:tubular}
 Let $A$ be a commutative $R$-algebra, let $f \in A$ be an element which is not a zero-divisor, let $A_f$ be the localization at $f$, and let $\hat{A} \defl \lim A \slash (f^n)$. Then
\[
\xymatrix{ \Spec(A_f) \pb{\Spec A} \Spec(\hat{A}) \ar[r] \ar[d] & \Spec(\hat{A}) \ar[d] \\ \Spec(A_f)  \ar[r] & \Spec(A)}
\]
 is a pushout diagram in $\ca{AS}$. In other words, for each Adams stack $X$, the canonical functor
\[
 X(A) \rightarrow X(A_f) \pb{X(A_f \ten{A} \hat{A}) } X(\hat{A})
\]
 is an equivalence of groupoids.
\end{thm}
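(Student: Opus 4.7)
The plan is to verify the two hypotheses of Theorem~\ref{thm:pushout_recognition} for the square in question. All four objects are affine schemes, hence Adams stacks, and the square commutes strictly, so the 2-cell $\varphi$ may be taken to be an identity.

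For Condition~(i), I would appeal directly to the classical Beauville--Laszlo theorem \cite{BEAUVILLE_LASZLO}. Since $f$ is a non-zero divisor on $A$, that theorem furnishes an equivalence between the category of finitely generated projective $A$-modules and the category of triples consisting of a finitely generated projective $A_f$-module, a finitely generated projective $\hat{A}$-module, and an isomorphism between their pullbacks to $A_f \ten{A} \hat{A}$. Since the rank of a finitely generated projective module is preserved under base change to any local ring, this equivalence restricts to vector bundles of constant rank $d$ for each $d \in \mathbb{N}$, which is exactly the content of Condition~(i).

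For Condition~(ii), let $M \in \QCoh_{\fp}(\Spec A)$ satisfy $M \ten{A} A_f = 0$ and $M \ten{A} \hat{A} = 0$. Since $M$ is finitely generated, the first vanishing yields some $n$ with $f^n M = 0$, so that $M$ is naturally an $A/(f^n)$-module. The standard fact that the canonical map $A/(f^n) \to \hat{A}/f^n \hat{A}$ is an isomorphism, which follows from $f$ being a non-zero divisor together with the description of $\hat{A}$ as the inverse limit of the $A/(f^k)$, then yields
\[
 M \cong M \ten{A/(f^n)} \hat{A}/f^n \hat{A} \cong M \ten{A} \hat{A} = 0,
\]
so $M = 0$, as required.

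The main conceptual input is the Beauville--Laszlo theorem, and once that is cited the rest is formal. I therefore expect no serious technical obstacle: Condition~(i) is literally a classical statement, Condition~(ii) is a short commutative-algebra computation, and the alternative formulation as an equivalence of groupoids $X(A) \to X(A_f) \pb{X(A_f \ten{A} \hat{A})} X(\hat{A})$ follows by unravelling the universal property of the bicategorical pushout in $\ca{AS}$.
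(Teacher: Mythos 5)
Your overall strategy is exactly the paper's: apply Theorem~\ref{thm:pushout_recognition}, get Condition~(i) from the classical Beauville--Laszlo theorem, and check Condition~(ii) by a short commutative-algebra argument. Your Condition~(ii) argument is fine and is essentially a proof of what the paper cites as \cite[Lemme~1 and 2]{BEAUVILLE_LASZLO} (that $A_f \times \hat{A}$ is a faithful $A$-algebra); the identification $A/(f^n) \cong \hat{A}/f^n\hat{A}$ you use is exactly the standard fact underlying those lemmas.

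The one place where your write-up is too thin is the passage from ``equivalence on finitely generated projective modules'' to ``equivalence on finitely generated projectives of \emph{constant} rank $d$.'' The sentence about ranks being preserved under base change to local rings does not by itself show that the $A$-module $M$ produced by Beauville--Laszlo has constant rank: you need to know that \emph{every} prime of $A$ is seen by either $A_f$ or $\hat{A}$, i.e.\ that $\Spec(A_f) \sqcup \Spec(\hat{A}) \rightarrow \Spec(A)$ is surjective. That is true (primes containing $f$ lift along $\hat{A} \rightarrow \hat{A}/f\hat{A} \cong A/fA$), but it is the actual content of the step and should be said. The paper argues differently and avoids $\hat{A}$ entirely here: it splits off the summand $W$ of $M$ on which the rank differs from $d$, observes that $W_f \cong 0$ because $M_f$ has constant rank $d$, hence $W$ is killed by a power of $f$; since $W$ is projective it embeds in some $A^n$, so $W \neq 0$ would make $f$ a zero-divisor. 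Either route closes the gap; as written, yours leaves the key covering statement implicit.
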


\begin{proof}
 By Theorem~\ref{thm:pushout_recognition}, we need to show that the functor
\begin{equation}\label{eqn:comparison_functor_to_pb}
 \Mod_A \rightarrow \Mod_{A_f} \pb{\Mod_{A_f \ten{A} \hat{A}}} \Mod_{\hat{A}}
\end{equation}
 induces an equivalece when restricted to finitely generated projective modules of constant rank and that the two functors $A_f \ten{A} -$ and $\hat{A} \ten{A} -$ jointly detect epimorphisms between finitely presentable $A$-modules.

 The latter follows from the fact that $A_f \times \hat{A}$ is a faithful $A$-algebra, see \cite[Lemme~1 and 2]{BEAUVILLE_LASZLO}.

 From the Beauville--Laszlo theorem we know that the functor above induces an equivalence when restricted to finitely generated projective modules (see \cite[Remarque~1]{BEAUVILLE_LASZLO}). It only remains to check that $M$ is finitely generated and both $M_f$ and $M_{\hat{A}}$ have constant rank $d$, then $M$ has constant rank $d$ as well.

 If this were not the case, we could write $M=V \oplus W$ where $\rk(W) \neq d$ (at each point of $\Spec(A)$) and $W \neq 0$. It follows that $W_f\cong 0$, that is, $W$ is annihilated by a power of $f$. But since $W$ is projective, this contradicts the fact that $f$ is not a zero-divisor in $A$: since the module $W$ embeds in some $A^n$, multiplication with $f$ on the latter would have non-trivial kernel.
\end{proof}

\begin{lemma}\label{lemma:lifting_sections}
 Let $A$ be a commutative $R$-algebra, $I \subseteq A$ an ideal, $N$ a projective $A$-module, and $p \colon M \rightarrow N$ an epimorphism of $A$-modules. If $\overline{s} \colon N \slash IN \rightarrow M\slash IM$ is a section of the induced morphism $\overline{p} \colon M \slash IM \rightarrow N\slash IN$, then there exists a section $s$ of $p$ such that the diagram
\[
 \xymatrix{N \ar[r]^-{s} \ar[d] & M \ar[d] \\ N \slash IN \ar[r]_-{\overline{s}} & M \slash IM}
\]
 is commutative, where the vertical arrows denote the canonical projections.
\end{lemma}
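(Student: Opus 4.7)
The plan is to exploit the projectivity of $N$ twice: once to get a preliminary section of $p$, and once more to correct it so that its reduction mod $I$ matches $\overline{s}$. First I would observe that, because $N$ is projective, the short exact sequence
\[
 \xymatrix{0 \ar[r] & K \ar[r] & M \ar[r]^-{p} & N \ar[r] & 0}
\]
with $K = \ker(p)$ is split, so there is some section $s_0 \colon N \rightarrow M$ of $p$. Reducing this split sequence modulo $I$ yields a still split short exact sequence $0 \to K/IK \to M/IM \to N/IN \to 0$, and the induced map $\overline{s_0} \colon N/IN \rightarrow M/IM$ is a section of $\overline{p}$.

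Now I would compare $\overline{s}$ and $\overline{s_0}$. Since both are sections of $\overline{p}$, their difference $\overline{s} - \overline{s_0} \colon N/IN \rightarrow M/IM$ is annihilated by $\overline{p}$, hence lands in the kernel, which we have identified with $K/IK$. Composing with the projection $N \rightarrow N/IN$ gives a morphism $d \colon N \rightarrow K/IK$. The projection $K \rightarrow K/IK$ is an epimorphism, so the projectivity of $N$ furnishes a lift $\delta \colon N \rightarrow K$ whose reduction $\overline{\delta} \colon N/IN \rightarrow K/IK$ agrees with $\overline{s} - \overline{s_0}$.

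Finally I would set $s \defl s_0 + \iota \circ \delta$, where $\iota \colon K \rightarrow M$ is the inclusion. Then $p \circ s = p \circ s_0 + p \circ \iota \circ \delta = \mathrm{id}_N + 0 = \mathrm{id}_N$, so $s$ is a section of $p$; and reducing mod $I$ gives $\overline{s_0} + \overline{\delta} = \overline{s_0} + (\overline{s} - \overline{s_0}) = \overline{s}$, which is precisely the commutativity of the square in the statement. There is no serious obstacle here; the only point requiring attention is that the kernel of $\overline{p}$ is genuinely $K/IK$ (rather than a larger submodule of $M/IM$), and this is guaranteed by the splitting of the original sequence, which in turn relies on the projectivity hypothesis on $N$.
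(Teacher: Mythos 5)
Your proof is correct and follows essentially the same route as the paper: both use projectivity of $N$ twice, first to split $p$ and then to lift the discrepancy between $\overline{s}$ and the section induced by the splitting. The paper phrases this discrepancy as the second component $\overline{h}$ of $(\overline{f})^{-1}\circ\overline{s}$ in the decomposition $M \cong N \oplus M^{\prime}$, whereas you phrase it additively as $\overline{s}-\overline{s_0}$ landing in $K/IK$; these are the same argument.
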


\begin{proof}
 Since $N$ is projective, there exists \emph{some} splitting of $p$, that is, there exists an $A$-module $M^{\prime}$ and an isomorphism $f \colon N \oplus M^{\prime} \rightarrow M$ such that $pf$ is given by the projection $N\oplus M^{\prime} \rightarrow N$. It follows that $\overline{p} \overline{f} \colon N \slash IN \oplus M^{\prime} \slash IM^{\prime} \rightarrow N\slash IN$ is given by the projection as well. Thus $(\overline{f})^{-1} \circ \overline{s} \colon N \slash IN \rightarrow N \slash IN \oplus M^{\prime} \slash IM^{\prime}$ is given by $(\id, \overline{h})$ for some $\overline{h} \colon N \slash IN \rightarrow M^{\prime} \slash IM^{\prime}$. Again using the fact that $N$ is projective, we can find a morphism $h \colon N \rightarrow M^{\prime}$ such that the diagram
\[
 \xymatrix{N \ar[r]^-{h} \ar[d] & M^{\prime} \ar[d] \\ N \slash IN \ar[r]_-{\overline{h}} & M^{\prime} \slash IM^{\prime} }
\]
 is commutative. It follows that $s=f \circ (\id,h)$ gives the desired section of $p$.
\end{proof}

\begin{lemma}\label{lemma:direct_summand_pullback}
 Let $A$ be a commutative $R$-algebra, let $I \subseteq A$ be an ideal, and let $\varphi \colon B \rightarrow A \slash I$ be a homomorphism of $R$-algebras. Then every object of the category
\[
 \Mod^{\fp}_B \pb{\Mod^{\fp}_{A \slash I}} \Mod^{\fp}_{A}
\]
 is an epimorphic quotient of a finite direct sum of copies of the unit object.
\end{lemma}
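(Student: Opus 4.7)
The plan is to construct an epimorphism $(B,\id,A)^n \to (N,\alpha,M)$ explicitly. Unpacking the definition, a morphism $(B,\id,A) \to (N,\alpha,M)$ from the unit is nothing but a pair $(n,m) \in N \times M$ such that $\alpha(1 \otimes n) = m + IM$ in $M\slash IM$, and a morphism $(B^n,\id,A^n) \to (N,\alpha,M)$ amounts to a list of such compatible pairs $\{(n_i,m_i)\}_{i=1}^n$. An epimorphism is obtained as soon as the first components generate $N$ over $B$ and the second components generate $M$ over $A$; in that case both component maps are surjective, hence so is the combined morphism in the iso-comma.

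I would proceed in two stages. First, choose generators $n_1,\ldots,n_k$ of the finitely generated $B$-module $N$ and, for each $i$, pick an arbitrary lift $m_i \in M$ of $\alpha(1 \otimes n_i) \in M\slash IM$. The pairs $(n_i,m_i)$ are compatible by construction and guarantee that the eventual $B$-side map will be surjective onto $N$.

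Second, I address surjectivity of the $A$-side. Pick generators $m'_1,\ldots,m'_l$ of $M$. Since the $n_i$ generate $N$, the elements $1 \otimes n_i$ generate $A\slash I \ten{B} N$ as an $A\slash I$-module, so $\alpha^{-1}(m'_j + IM) = \sum_i \bar{c}_{ji}(1 \otimes n_i)$ for suitable $\bar{c}_{ji} \in A\slash I$. Lifting these to $c_{ji} \in A$ shows that $r_j \defl m'_j - \sum_i c_{ji} m_i \in IM$, and hence admits a finite expression $r_j = \sum_{t=1}^{T_j} a_{jt} y_{jt}$ with $a_{jt} \in I$ and $y_{jt} \in M$. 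Each pair $(0, a_{jt} y_{jt})$ is automatically compatible, since both sides reduce to zero in $M\slash IM$. Assembling the $(n_i,m_i)$ together with all the $(0, a_{jt} y_{jt})$ yields a morphism $(B,\id,A)^n \to (N,\alpha,M)$ with $n = k + \sum_j T_j$. Surjectivity on the $N$-side is clear because every $n_i$ appears among the first components; surjectivity on the $M$-side follows because $m'_j = \sum_i c_{ji} m_i + \sum_t a_{jt} y_{jt}$ lies in the image for every $j$.

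The main obstacle is precisely the second stage: when $\varphi$ is not surjective, the canonical map $N \to A\slash I \ten{B} N$ is also not surjective, so one cannot simply lift $\alpha^{-1}(m'_j + IM)$ to a single element of $N$ and pair it with $m'_j$ in one go. The workaround is the split strategy above: use the $n_i$ (paired with arbitrary lifts) to generate $N$, and then close up the image on the $A$-side by adding auxiliary morphisms of the form $(0, \text{element of }IM)$, which are compatible with $\alpha$ for free.
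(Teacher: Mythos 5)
Your proof is correct and follows essentially the same strategy as the paper's: first generate $N$ and pair its generators with lifts to $M$ (so the $A$-side map is surjective modulo $IM$), then repair surjectivity onto $M$ by adjoining extra copies of the unit mapping into $IM$ and paired with $0$ on the $B$-side --- your pairs $(0, a_{jt}y_{jt})$ are exactly the paper's auxiliary morphism $\ell \colon A^{\oplus d} \rightarrow IM$ paired with $(p,0)$. The only difference is presentational: you lift element by element where the paper lifts a single module map $\overline{q}$ using projectivity of $A^{\oplus d}$.
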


\begin{proof}
 The unit object is given by $(B,\varphi, A)$ where $\varphi \colon A \slash I \ten{B} B \rightarrow A \slash I$ denotes the canonical isomorphism induced by $\varphi \colon B \rightarrow A \slash I$. Thus the goal is to construct an epimorphism
\[
 (B^{\oplus n}, \varphi^{\oplus n}, A^{\oplus n}) \rightarrow (N,f,M)
\]
 for any finitely presentable $B$-module $N$, finitely presentable $A$-module $M$, and isomorphism $f \colon A \slash I \ten{B} N \rightarrow M \slash IM$. Assume that both $M$ and $N$ are generated by $d$ elements. Pick an epimorphism $p\colon B^{\oplus d} \rightarrow N$ of $B$-modules. There exists an epimorphism $\overline{q}$ making the diagram
\[
 \xymatrix{A \slash I \ten{B} B^{\oplus d} \ar[r]^-{\varphi^{\oplus d}} \ar[d]_{A \slash I \ten{B} p} & A^{\oplus d} \slash I A^{\oplus d} \ar[d]^{\overline{q}} \\ A \slash I \ten{B} N \ar[r]_-{f} & M \slash IM}
\]
 commutative since $A \slash IA$ is projective. It is induced by some morphism $q \colon A^{\oplus d} \rightarrow M$. Although the morphism $q$ need not be an epimorphism of $A$-modules, the assumption that $M$ is generated by $d$ elements implies that there exists a morphism $\ell \colon A^{\oplus d} \rightarrow IM$ of $A$-modules such that
\[
 (q,\ell) \colon A^{\oplus 2d} \rightarrow M
\]
 is surjective (indeed, for each generator $m \in M$ we can find an element $a \in A^{\oplus d}$ such that $m-q(a) \in IM$). Together with the the morphism $(p,0) \colon B^{\oplus 2d} \rightarrow N$ we get the desired epimorphism
\[
 (B^{\oplus 2d}, \varphi^{\oplus 2d}, A^{\oplus 2d}) \rightarrow (N,f,M) 
\]
 in $\Mod^{\fp}_B \pb{\Mod^{\fp}_{A \slash I}} \Mod^{\fp}_{A}$.
\end{proof}

 These two lemmas allow us to prove that pushouts of affine schemes along closed immersions are again affine in the 2-category of Adams stacks.

\begin{thm}\label{thm:affine_pushout}
 Let 
\[
 \xymatrix{A \pb{C} B \ar[r] \ar[d] & B \ar[d] \\ A \ar[r] & C}
\]
 be a pullback diagram in the category of commutative $R$-algebras such that $A \rightarrow C$ is surjective and $C \neq 0$. Then the corresponding diagram
\[
 \xymatrix{\Spec(C) \ar[d] \ar[r] & \Spec(A) \ar[d] \\ \Spec(B) \ar[r] & \Spec(A \pb{C} B)}
\]
 is a bicategorical pushout diagram in the 2-category $\ca{AS}$ of Adams stacks. In particular, if $X$ is any Adams stack, then the canonical functor
\[
 X(A \pb{C} B) \rightarrow X(A) \pb{X(C)} X(B)
\]
 is an equivalence of groupoids.
\end{thm}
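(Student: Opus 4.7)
The plan is to follow the same pattern as the proof of Theorem~\ref{thm:infinite_affine_coproducts}: apply Proposition~\ref{prop:affine_criterion} to the right exact symmetric monoidal category
\[
\ca{A} \defl \Mod^{\fp}_A \pb{\Mod^{\fp}_C} \Mod^{\fp}_B,
\]
which by generalized Tannaka duality and \cite[Theorem~5.6.4]{SCHAEPPI_COLIMITS} computes the category of finitely presentable quasi-coherent sheaves on the bicategorical pushout of the given diagram in $\ca{AS}$ via the universal weakly Tannakian category $T(\ca{A})$. A direct computation identifies $\End_{\ca{A}}(A,\id,B)$ with the pullback ring $A \pb{C} B = D$, so it will suffice to verify the two hypotheses of Proposition~\ref{prop:affine_criterion} for this $\ca{A}$ in order to obtain $T(\ca{A}) \simeq \Mod^{\fp}_D$ and identify the pushout with $\Spec(D)$.

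The second hypothesis---that every locally free object of constant rank in $\ca{A}$ is an epimorphic quotient of $\U^{\oplus n}$---is immediate from Lemma~\ref{lemma:direct_summand_pullback}, which in fact provides such a presentation for every object of $\ca{A}$, not only the locally free ones. For the first hypothesis, I would consider an epimorphism $(p_A, p_B) \colon (P_A, f, P_B) \to (Q_A, g, Q_B)$ between locally free objects of constant finite rank in $\ca{A}$; epimorphisms in the pullback category are componentwise, so each of $p_A$ and $p_B$ surjects onto a finitely generated projective module and hence admits some section. The point is to produce two sections whose reductions modulo $I = \ker(A \to C)$ are compatible with the gluing isomorphisms $f$ and $g$, so that they assemble into a section in $\ca{A}$. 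To achieve this, first pick an arbitrary section $s_B$ of $p_B$, transport its reduction $\overline{s}_B$ through $f$ and $g$ to obtain a section $\overline{s}\colon Q_A/IQ_A \to P_A/IP_A$ of the reduction $\overline{p}_A$, and then apply Lemma~\ref{lemma:lifting_sections} with the projective $A$-module $Q_A$ to produce a section $s_A$ of $p_A$ whose reduction modulo $I$ is precisely $\overline{s}$ by construction. The pair $(s_A, s_B)$ is then compatible with the gluing data and defines a splitting of $(p_A, p_B)$ in $\ca{A}$.

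The main obstacle is exactly this compatibility of sections across the two sides of the pullback category, and it is what Lemma~\ref{lemma:lifting_sections} was designed to handle: surjectivity of $A \to C$ is essential so that $C = A/I$ is a quotient of $A$ and the lifting lemma applies. The hypothesis $C \neq 0$ ensures that the notion of constant rank is well-behaved on $\ca{A}$, so that Corollary~\ref{cor:lf_preserves_connected_limits} identifies $\LF^c_{\ca{A}}$ with the category of compatible triples of locally free modules of a common rank. With both conditions of Proposition~\ref{prop:affine_criterion} verified, one obtains $T(\ca{A}) \simeq \Mod^{\fp}_D$, and hence the bicategorical pushout in $\ca{AS}$ is $\Spec(A \pb{C} B)$ as claimed.
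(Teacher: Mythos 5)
Your proposal is correct and follows essentially the same route as the paper: reduce to $C = A/I$, identify the pushout's sheaf category as $T\bigl(\Mod^{\fp}_B \pb{\Mod^{\fp}_{A/I}} \Mod^{\fp}_A\bigr)$, verify the second hypothesis of Proposition~\ref{prop:affine_criterion} via Lemma~\ref{lemma:direct_summand_pullback}, and split epimorphisms between locally free objects by choosing an arbitrary section on one side and using Lemma~\ref{lemma:lifting_sections} to produce a compatible section on the other. The paper's argument is the same, down to the use of Proposition~\ref{prop:locally_free_in_limit} to see that the components are finitely generated projective.
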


\begin{proof}
 The assumption that the morphism $A \rightarrow C$ is surjective means that it suffices to consider the case where this morphism is the projection $A \rightarrow A \slash I$ for some ideal $I \subseteq A$. 

 As in the proof of Theorem~\ref{thm:infinite_affine_coproducts} we can use \cite[Theorem~5.6.4]{SCHAEPPI_COLIMITS} to show that there exists an equivalence
\[
 \QCoh_{\fp}\bigr( \Spec(A) \po{\Spec(A \slash I)} \Spec(B) \bigr) \simeq T\bigl( \Mod^{\fp}_B \pb{\Mod^{\fp}_{A \slash I}} \Mod^{\fp}_{A} \bigr)
\]
 of weakly Tannakian categories. The endomorphism ring of the unit of the category $\Mod^{\fp}_B \pb{\Mod^{\fp}_{A \slash I}} \Mod^{\fp}_{A}$ is isomorphic to $A \pb{A \slash I} B$. This reduces the problem to checking that the weakly Tannakian category above is affine, that is, that the conditions of Proposition~\ref{prop:affine_criterion} are satisfied.

 The second condition was proved in Lemma~\ref{lemma:direct_summand_pullback}. To see that the first condition holds, let
\[
 (p,q) \colon (N,f,M) \rightarrow (N^{\prime}, f^{\prime}, M^{\prime})
\]
 be an epimorphism between locally free objects of constant finite rank in the category $\Mod^{\fp}_B \pb{\Mod^{\fp}_{A \slash I}} \Mod^{\fp}_{A}$. Since the functors involved are right exact, this is the case if and only if both $p$ and $q$ are epimorphisms. By Proposition~\ref{prop:locally_free_in_limit}, both $N^{\prime}$ and $M^{\prime}$ are finitely generated projective, so there exists a section $s$ of $p$. By Lemma~\ref{lemma:lifting_sections} we can find a section $t$ of $q$ such that $(s,t)$ is a morphism in the category $\Mod^{\fp}_B \pb{\Mod^{\fp}_{A \slash I}} \Mod^{\fp}_{A}$. This shows that the first condition of Proposition~\ref{prop:affine_criterion} is also satisfied, as claimed. 
\end{proof}

\subsection{Completions}

 Our next example shows that under relatively mild conditions on a sequence of proper ideals
\[
 I_0 \supseteq I_1 \supseteq \ldots \supseteq I_n \supseteq \ldots
\]
 of a commutative $R$-algebra $A$, the canonical morphisms
\[
 \Spec(A \slash I_n) \rightarrow \Spec(\lim A \slash I_n)
\]
 exhibit $\Spec(A)$ as colimit of the chain
\[
 \xymatrix{ \Spec(A \slash I_0) \ar[r] & \Spec(A \slash I_1) \ar[r] & \ldots \ar[r] & \Spec(A \slash I_n) \ar[r] & \ldots }
\]
 in the 2-category $\ca{AS}$ of Adams stacks. This result was proved for all quasi-compact quasi-separated algebraic spaces by Bhatt, and for quasi-compact semi-separated Artin stacks by Hall and Rydh (under the assumption that the commutative $R$-algebra $A$ is noetherian). In both these cases, the authors were able to prove this without demanding that the algebraic space or stack satisfies the resolution property.

 For non-noetherian rings, the category of finitely presentable modules over the completion of a commutative $R$-algebra rather difficult to describe. We will use the fact that the category of finitely generated \emph{projective} modules is much more tractable. Since we are dealing with Adams stacks, it is sufficient to understand the latter (in other words, we can reduce the problem to studying vector bundles, which, in the affine case, are precisely the finitely generated projective modules).

 Since our result holds for arbitrary commutative $R$-algebras $A$, we can apply it to rings of power series $A \llbracket x \rrbracket$ and rings of Witt vectors (cf.\ \cite[Corollary~4.4]{BHATT}).

\begin{lemma}\label{lemma:jacobson}
 Let $A$ be a commutative $R$-algebra and $I \subseteq A$ an ideal which is contained in the Jacobson radical (the intersection of all maximal ideals of $A$). Let $M$ and $N$ be finitely generated projective $A$-modules, and let $\overline{p} \colon M \slash IM \rightarrow N \slash IN$ be an epimorphism of $A \slash I$-modules with section $\overline{s} \colon N \slash IN \rightarrow M \slash IM$.

 Then there exists a lift $p \colon M \rightarrow N$ of $\overline {p}$, and for any such lift there exists a lift $s \colon N \rightarrow M$ of $\overline{s}$ such that $ps=\id$.
\end{lemma}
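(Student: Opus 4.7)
The plan is to lift the two morphisms separately using projectivity, and then to correct the choice of section by an automorphism of $N$ obtained from Nakayama's lemma.

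For the first assertion, observe that since $M$ is projective, the composite $M \to M\slash IM \xrightarrow{\overline{p}} N\slash IN$ factors through the canonical projection $N \to N\slash IN$, giving a morphism $p \colon M \to N$ that lifts $\overline{p}$. (Note also that $p$ is automatically surjective, since it is surjective modulo $I$ and $N$ is finitely generated, so Nakayama applied to $N\slash p(M)$ gives $p(M)=N$.)

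For the second assertion, fix any lift $p \colon M \to N$ of $\overline{p}$. Using that $N$ is projective, choose any lift $s_0 \colon N \to M$ of $\overline{s}$. The composite $\alpha \defl p s_0 \colon N \to N$ reduces modulo $I$ to $\overline{p}\,\overline{s} = \id_{N\slash IN}$, so $\alpha - \id_N$ factors through $IN$. In particular $\alpha$ is surjective modulo $I$; since $N$ is finitely generated and $I$ is contained in the Jacobson radical of $A$, Nakayama's lemma implies that $\alpha$ itself is surjective. By the classical theorem that every surjective endomorphism of a finitely generated module over a commutative ring is injective (a consequence of the Cayley--Hamilton trick), $\alpha$ is an automorphism of $N$.

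Now set $s \defl s_0 \circ \alpha^{-1}$. Then $ps = p s_0 \alpha^{-1} = \alpha \alpha^{-1} = \id_N$ by construction. Moreover, since $\alpha \equiv \id_N \pmod{IN}$ we also have $\alpha^{-1} \equiv \id_N \pmod{IN}$, and hence $s \equiv s_0 \equiv \overline{s} \pmod{I}$, so $s$ is indeed a lift of $\overline{s}$. The only nontrivial ingredient is the ``surjective implies injective'' fact for finitely generated modules; everything else is routine projectivity and Nakayama.
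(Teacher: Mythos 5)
Your proof is correct, but it handles the section differently from the paper. For the existence and surjectivity of the lift $p$ the two arguments coincide (projectivity of $M$ plus Nakayama applied to the cokernel). For the section, the paper reduces to its Lemma~\ref{lemma:lifting_sections}: since $p$ is a surjection onto the projective module $N$, it splits as a projection $N \oplus M^{\prime} \rightarrow N$, and one then adjusts the off-diagonal component of the splitting to match $\overline{s}$; this part of the argument needs no hypothesis on the ideal $I$ at all, and only elementary diagram manipulations. You instead lift $\overline{s}$ arbitrarily to $s_0$ and correct it by the endomorphism $\alpha = p s_0$ of $N$, which is congruent to the identity modulo $I$ and hence surjective by Nakayama, and then invertible by the classical fact that a surjective endomorphism of a finitely generated module over a commutative ring is injective (Vasconcelos). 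Your route is shorter and self-contained, but it uses the Jacobson radical hypothesis a second time and imports that extra (nontrivial, though standard) determinant-trick input; the paper's route isolates the section-lifting step as a statement valid for arbitrary ideals, which it also reuses elsewhere (in the proof of Theorem~\ref{thm:affine_pushout}). The verification that $s = s_0 \alpha^{-1}$ still reduces to $\overline{s}$ is correctly handled via the fact that reduction modulo $I$ is a ring homomorphism on $\End_A(N)$, so $\alpha^{-1}$ also reduces to the identity.
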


\begin{proof}
 Since $M$ is projective, we can always find a lift $p$. The cokernel $K$ of $p$ is finitely generated since $N$ is. The assumption that $\overline{p}$ is an epimorphism implies that $K \slash IK=0$, that is, $K=IK$. Since $I$ is contained in the Jacobson radical, it follows from Nakayama's lemma that $K=0$. Thus $p$ is an epimorphism, and the existence of a compatible section $s \colon N \rightarrow M$ of $p$ is a consequence of Lemma~\ref{lemma:lifting_sections}.
\end{proof}

\begin{thm}\label{thm:completion}
 Let $A$ be a commutative $R$-algebra and let
\[
 I_0 \supseteq I_1 \supseteq \ldots \supseteq I_n \supseteq \ldots
\]
 be a sequence of proper ideals of $A$ such that for all maximal ideals $\mathfrak{m}$ of $A$ and all $n \in \mathbb{N}$, the implication
\[
 I_{n+1} \subseteq \mathfrak{m} \Rightarrow I_n \subseteq \mathfrak{m}
\]
 holds. Then the canonical morphisms
\[
 \Spec(A \slash I_n) \rightarrow \Spec(\lim A \slash I_n)
\]
 exhibit $\Spec(\lim A\slash I_n)$ as colimit of the chain
\[
 \xymatrix{ \Spec(A \slash I_0) \ar[r] & \Spec(A \slash I_1) \ar[r] & \ldots \ar[r] & \Spec(A \slash I_n) \ar[r] & \ldots }
\]
 in the 2-category $\ca{AS}$ of Adams stacks. In other words, for each Adams stack $X$, the canonical functors
\[
 X(\lim A \slash I_n) \rightarrow \lim X(A \slash I_n)
\]
 is an equivalence of groupoids.
\end{thm}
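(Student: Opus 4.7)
The plan is to apply Theorem~\ref{thm:colimit_recognition} to the chain $X_n = \Spec(A/I_n)$ indexed by $\ca{I} = \mathbb{N}$, which is connected; the subset $\ca{I}_0 = \mathbb{N}$ is trivially isomorphism-detecting in $\ca{I}^{\op}$. Writing $\widehat{A} \defl \lim A/I_n$ and $\widehat{I}_n \defl \ker(\widehat{A} \twoheadrightarrow A/I_n)$, so that $\widehat{A}/\widehat{I}_n \cong A/I_n$, it suffices to verify conditions~(i) and~(ii) of Theorem~\ref{thm:colimit_recognition} for the pseudococone of canonical morphisms $\Spec(A/I_n) \to \Spec(\widehat{A})$.

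The crucial input from the hypothesis on the $I_n$ is that each $\widehat{I}_n$ lies in the Jacobson radical of $\widehat{A}$. Iterating the implication $I_{k+1} \subseteq \mathfrak{m} \Rightarrow I_k \subseteq \mathfrak{m}$ shows that every maximal ideal of $A$ containing $I_k$ also contains $I_n$ for $n \leq k$, so the image of $I_n$ in $A/I_k$ lies in the Jacobson radical of $A/I_k$. For $x \in \widehat{I}_n$ with components $x_k \in I_n/I_k$, each $1 + x_k$ is therefore a unit in $A/I_k$, and by uniqueness these inverses assemble into an inverse of $1 + x$ in $\widehat{A}$. Condition~(ii) is now immediate from Lemma~\ref{lemma:epi_detection_in_lfp_abelian_cats}: a finitely presentable $\widehat{A}$-module $M$ with $M \otimes_{\widehat{A}} A/I_n \cong 0$ for all $n$ already satisfies $\widehat{I}_0 M = M$, forcing $M \cong 0$ by Nakayama's Lemma.

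Condition~(i), the equivalence $\VB^c(\Spec \widehat{A}) \to \lim \VB^c(\Spec A/I_n)$, is the main obstacle. For full faithfulness, observe that if $P$ and $Q$ are finitely generated projective over $\widehat{A}$ then so is $M \defl \Hom_{\widehat{A}}(P,Q)$; writing $M$ as a direct summand of some $\widehat{A}^N$ and using that inverse limits commute with finite products and retracts, the definitional isomorphism $\widehat{A} \cong \lim \widehat{A}/\widehat{I}_n$ transfers to $M \cong \lim M/\widehat{I}_n M$, which unwinds to the required identification $\Hom_{\widehat{A}}(P,Q) \cong \lim \Hom_{A/I_n}(P_n, Q_n)$.

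Essential surjectivity is the hardest step. Given a compatible system $(P_n)$ of finitely generated projective $A/I_n$-modules, Proposition~\ref{prop:locally_free_in_limit} provides a common rank $r$. Choosing a surjection $p_0 \colon (A/I_0)^N \twoheadrightarrow P_0$ with $N \geq r$, one inductively constructs surjections $p_{n+1} \colon (A/I_{n+1})^N \twoheadrightarrow P_{n+1}$ lifting $p_n$ modulo $I_n$: such a lift exists because $(A/I_{n+1})^N$ is free, and surjectivity follows from Nakayama applied to the image of $I_n$ in $A/I_{n+1}$. Applying Lemma~\ref{lemma:jacobson} at each stage produces compatible sections $s_n$ of the $p_n$, and passing to inverse limits yields a splitting of the induced map $\widehat{A}^N \to P \defl \lim P_n$, exhibiting $P$ as a direct summand of $\widehat{A}^N$ and hence as a finitely generated projective $\widehat{A}$-module. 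The levelwise split short exact sequences $0 \to P_n \to (A/I_n)^N \to K_n \to 0$ remain exact under inverse limits, yielding $P \otimes_{\widehat{A}} A/I_n \cong P_n$. Finally, since $\widehat{I}_0$ lies in the Jacobson radical, every maximal ideal of $\widehat{A}$ lies in $V(\widehat{I}_0)$, so the common rank $r$ of the $P_n$ transfers to $P$.
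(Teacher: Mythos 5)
Your proposal is correct, but it takes a genuinely different route from the paper. The paper never names the candidate colimit in advance: it works intrinsically in the limit category $\lim \Mod^{\fp}_{A \slash I_n}$, shows via Proposition~\ref{prop:affine_criterion} that the associated universal weakly Tannakian category $T(\lim \Mod^{\fp}_{A\slash I_n})$ is affine (the two hypotheses -- every epimorphism onto a locally free object of constant rank splits, and every such object is a summand of $\U^{\oplus d}$ -- are checked by applying Lemma~\ref{lemma:jacobson} inductively up the tower), and then simply reads off the answer from $\End(\U) \cong \lim A \slash I_n$. You instead guess $W = \Spec(\widehat{A})$ and verify the hypotheses of Theorem~\ref{thm:colimit_recognition}, which forces you to prove the classical statement that $\VB^c(\Spec \widehat{A}) \to \lim \VB^c(\Spec A\slash I_n)$ is an equivalence: full faithfulness via $M \cong \lim M/\widehat{I}_n M$ for summands of $\widehat{A}^N$, and essential surjectivity by lifting generators and sections up the tower and passing to the inverse limit. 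Both arguments turn on exactly the same commutative-algebra input (the hypothesis on maximal ideals puts $I_n/I_{n+1}$, respectively $\widehat{I}_n$, in the relevant Jacobson radical, so Nakayama and Lemma~\ref{lemma:jacobson} apply), and your inductive lifting of the $p_n$ and $s_n$ mirrors the paper's almost verbatim; the difference is where the work is spent. The paper's route is shorter because the affineness criterion absorbs the identification of the colimit with a spectrum, whereas your route produces the equivalence on vector bundles explicitly as an input rather than extracting it afterwards from Corollary~\ref{cor:vector_bundles_on_colimits}. Two cosmetic points: your appeal to Lemma~\ref{lemma:epi_detection_in_lfp_abelian_cats} for condition~(ii) should really be to its trivial consequence for a family of functors (the cokernel of a map of finitely presentable $\widehat{A}$-modules killed by $-\otimes_{\widehat{A}} A/I_0$ vanishes by Nakayama -- no need to worry about finite presentability in the infinite product category), and the condition ``$N \geq r$'' should just read ``$N$ large enough that $P_0$ is generated by $N$ elements''. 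Neither affects the correctness of the argument.
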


\begin{proof}
 We know from \cite[Theorem~5.6.4]{SCHAEPPI_COLIMITS} that the colimit in the 2-category of Adams stacks corresponds to the weakly Tannakian category $T \lim \Mod^{\fp}_{A \slash I_n}$. To prove the claim we need to show that it is affine (see Proposition~\ref{prop:affine_criterion}) and that the endormophism ring of the unit of $\lim \Mod^{\fp}_{A \slash I_n}$ is isomorphic to $\lim A \slash I_n$. The unit is given by the system $(A \slash I_n, \varphi_n)_{n \in \mathbb{N}}$ where $\varphi_n$ denotes the canonical isomorphism
\[
(A \slash I_{n+1}) \slash (I_{n} \slash I_{n+1}) \cong A \slash I_{n} \smash {\rlap{,}}
\]
 so the latter claim follows from the fact that endomorphisms of $A \slash I_n$, considered as a module over itself, are given by multiplication with an element of $A \slash I_n$.

 To see that $T \lim \Mod^{\fp}_{A \slash I_n}$ is affine it remains to check that the two conditions of Proposition~\ref{prop:affine_criterion} are satisfied. From Proposition~\ref{prop:locally_free_in_limit} we know that locally free objects of $\lim \Mod^{\fp}_{A \slash I_n}$ are precisely the systems $(M_n,\varphi_n)$ such that $M_n$ is a finitely generated and projective $A \slash I_n$-module of constant rank. The assumption on maximal ideals means that $I_{n+1} \slash I_n$ is contained in the Jacobson radical of $A \slash I_n$. Thus we can apply Lemma~\ref{lemma:jacobson} inductively, which implies that every epimorphism with target a locally free object of constant finite rank is split.

 Moreover, if we have a system $(M_n,\varphi_n)$ such that $M_n$ is a finitely generated and projective $A \slash I_n$-module of constant rank, then we can find a $d \in \mathbb{N}$ and an epimorphism
\[
p_0 \colon A^{\oplus d} \slash I_0 A^{\oplus d} \rightarrow M_0 
\]
 with section $s_0 \colon M_0 \rightarrow A^{\oplus d} \slash I_0 A^{\oplus d}$. Using Lemma~\ref{lemma:jacobson} we can lift this (inductively) to a system of compatible epimorphisms
\[
p_n \colon A^{\oplus d} \slash I_n A^{\oplus d} \rightarrow M_n 
\]
 with compatible sections $s_n$. By construction, these morphisms $p_n$ and $s_n$ exhibit $(M_n,\varphi_n)$ as direct summand of the $d$-fold direct sum of copies of the unit object of $\lim \Mod^{\fp}_{A \slash I_n}$.
\end{proof}

%\begin{cor}\label{cor:admissible_ring}
% Let $A$ be an admissible $R$-algebra and let $I$ be an ideal of definition of $A$. Then for every Adams stack $X$, the canonical comparison functor
%\[
% X(A) \rightarrow X(A \slash I^n)
%\]
% exhibit the groupoid $X(A)$ as bicategorical limit of the groupoids $X(A \slash I^n)$.
%\end{cor}

\begin{cor}\label{cor:adic_ring}
 Let $A$ be an $R$-algebra which is $I$-adically complete for some ideal $I \subseteq A$ and let $X$ be an Adams stack over $R$. Then the natural functor
\[
 X(A) \rightarrow \lim X(A \slash I^n)
\]
 is an equivalence of groupoids.
\end{cor}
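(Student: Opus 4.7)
The plan is to deduce the corollary directly from Theorem~\ref{thm:completion} by applying it to the sequence of powers $I_n \defl I^{n+1}$, and then rewriting the source of the comparison functor using the $I$-adic completeness $A \cong \lim_n A\slash I^n$.

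First I would dispose of the degenerate case $I = A$: then every $A\slash I^n$ is the zero ring, completeness forces $A = 0$, and both sides of the functor in the claim are terminal groupoids. Assuming $I$ is proper from here on, each power $I^{n+1}$ is also proper, so the decreasing sequence $I \supseteq I^2 \supseteq \ldots$ consists of proper ideals, which handles the first hypothesis of Theorem~\ref{thm:completion}.

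Next I would verify the maximal-ideal hypothesis. If $\mathfrak{m}$ is a maximal ideal with $I^{n+2} \subseteq \mathfrak{m}$, then since $\mathfrak{m}$ is prime, the containment $I \cdot I^{n+1} \subseteq \mathfrak{m}$ combined with a straightforward induction forces $I \subseteq \mathfrak{m}$, and hence $I^{n+1} \subseteq \mathfrak{m}$. With both hypotheses confirmed, Theorem~\ref{thm:completion} delivers an equivalence
\[
 X(\textstyle\lim_n A\slash I^n) \simeq \lim_n X(A\slash I^n),
\]
and precomposing with the isomorphism $A \cong \lim_n A\slash I^n$ furnished by $I$-adic completeness yields the statement.

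There is really no substantive obstacle here: the corollary is essentially a packaging of the completeness hypothesis as an instance of the more general completion theorem, and the only thing to check is that the power sequence $(I^n)$ satisfies the running hypotheses of that theorem.
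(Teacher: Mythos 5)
Your proposal is correct and follows essentially the same route as the paper: apply Theorem~\ref{thm:completion} to the sequence of powers of $I$ (checking that a maximal ideal contains $I^{n}$ if and only if it contains $I$) and then identify $A$ with $\lim A/I^n$ via completeness. Your extra care with the degenerate case $I=A$ and the properness of the powers is a minor refinement the paper glosses over, but the argument is the same.
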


\begin{proof}
  We have to show that for any Adams stack $X$ and any $I$-adic ring $A$, the natural functor
\[
 X(A) \rightarrow \lim X(A \slash I^n)
\]
 is an equivalence of groupoids.

 Since $A$ is $I$-adic, we know that $A \cong \lim A \slash I^n$. We can apply Theorem~\ref{thm:completion} to the sequence of ideals $I_n \defl I^n$ since a maximal ideal contains the $n$-fold product $I^n$ of $I$ if and only if it contains $I$.
\end{proof}

\section{Global version of the Beauville--Laszlo theorem}\label{section:beauville_laszlo}

 In \cite{BEAUVILLE_LASZLO}, the authors claim that \cite[Theor\`eme~3]{BEAUVILLE_LASZLO} can be turned into a global statement as follows. For a scheme $X$ and an effective Cartier divisor $Z \subseteq X$ with sheaf of ideals $I \subseteq \U_X$, we can form the algebra 
\[
\widehat{\U}_X=\lim \U_X \slash I^n
\]
 in the category of quasicoherent sheaves on $X$. The authors claim that for $\widehat Z=\Spec_X (\widehat{\U}_X)$ and $U=X\setminus Z$, the fiber square
\[
 \xymatrix{ U \pb{X} \widehat{Z} \ar[r] \ar[d] & \widehat{Z} \ar[d] \\ U \ar[r] & X }
\]
 satisfies descent for vector bundles. However, this cannot be reduced to the case where $X$ is affine since the formation of $\widehat{Z}$ does \emph{not} commute with restriction to open subsets. It would commute if the limit $\widehat{\U}_X=\lim \U_X \slash I^n$ were computed in the category of $\U_X$-modules instead, but then it would not be possible to form the relative spectrum $\Spec_X(\widehat{\U}_X)$. This problem is also discussed by Ben-Bassat and Temkin in \cite{BASSAT_TEMKIN}.

 In this section we will prove that the conclusion is nevertheless true for a large class of schemes and stacks $X$. The main technical fact we need is that the category $\QCoh(X)$ of quasi-coherent sheaves on $X$ is generated by flat objects. We will also need some mild finiteness conditions, all of which are satisfied if $X$ is an Adams stack.

 Since we cannot work locally, all our constructions and proofs have to take place in the tensor category $\QCoh(X)$ directly. In particular, we have to give meaningful descriptions of all the objects involved in that context. Once we establish some basic properties of these---all of which are well-known in the case where $X$ is affine---we will see that the argument given in \cite{BEAUVILLE_LASZLO} for the affine case can be adapted to the global situation.

 In fact, we can work in the more general context of lfp abelian tensor categories $\ca{C}$ (subject to some conditions). Thus our results apply for example also to certain categories of chain complexes in $\QCoh(X)$ or to categories of Mackey functors. We start by introducing the basic concepts and constructions in a way that makes sense in the context of arbitrary tensor categories.

\subsection{The basic constructions and their properties}\label{section:beauville_laszlo_constructions} 

 Throughout, we fix a lfp abelian tensor category $\ca{C}$. An \emph{effective cartier divisor} in $\ca{C}$ is a subobject $f \colon I \rightarrow \U$ of the unit object which is invertible as an object of $\ca{C}$. Note that since the symmetry acts trivially on $\U$, it must also act trivially on $I$, that is, $I$ is a line bundle in the sense of \cite[Definition~5.3.7]{SCHAEPPI_COLIMITS}. Note that for the case $\ca{C}=\QCoh(X)$, the closed substack $Z$ corresponds to $\Spec_X(\U_X \slash I)$.

 We first need an explicit description of the open complement of $Z$ which generalizes to the context of abstract tensor categories $\ca{C}$. Since the inclusion $X \setminus Z \rightarrow X$ is always affine, it suffices to construct the corresponding commutative algebra in $\ca{C}$. Namely, let $\U_f$ be the pushout
\[
 \xymatrix{\Sym \U \ar[r]^{\Sym f^{\vee}} \ar[d] & \Sym(I^{\vee}) \ar[d] \\ \U \ar[r] & \U_f }
\]
 in the category of commutative algebras in $\ca{C}$ (see also \cite[Definition~5.3.1]{SCHAEPPI_COLIMITS}). Since $I^{\vee}$ is a line bundle, the symmetry acts trivially on $(I^{\vee})^{\otimes n}$. Thus the symmetric algebra on $I^{\vee}$ coincides with the tensor algebra on $I^{\vee}$, and it follows that the underlying object of $\U_f$ is given by the colimit of the chain
\[
 \xymatrix@C=50pt{\U \ar[r]^-{f^{\vee}} & I^{\vee} \ar[r]^-{f^{\vee} \otimes I^{\vee}} & (I^{\vee})^{\otimes 2} \ar[r]^-{f^{\vee} \otimes (I^{\vee})^{\otimes 2}} & \ldots }
\]
 in $\ca{C}$ (see \cite[Lemma~5.14]{SCHAEPPI_GEOMETRIC}). In particular, the object $\U_f$ is flat. We denote its unit by $\eta_f \colon \U \rightarrow \U_f$. We call an object $M \in \ca{C}$ \emph{$f$-regular} (respectively \emph{$f$-local}) if the morphism
\[
 f \otimes M \colon I \otimes M \rightarrow M
\]
 is a monomorphism (isomorphism).

\begin{lemma}\label{lemma:properties_of_o_f}
 Let $\ca{C}$ be a lfp abelian tensor category and let $f \colon I \rightarrow \U$ be an effective Cartier divisor in $\ca{C}$. Then the algebra $\U_f$ has the following properties:
\begin{enumerate}
 \item[(i)] The algebra $\U_f$ is universal among algebras $B$ such that the base change $f^{\vee}_B$ has a section.
\item[(ii)] The base change $f_{\U_f} \colon I_{\U_f} \rightarrow \U_f$ of $f$ is an isomorphism.
\item[(iii)] The morphism $\eta_f \otimes \U_f \colon \U_f \rightarrow \U_f \otimes \U_f$ is an isomorphism.
\item[(iv)] The base change
\[
 (-)_{\U_f} \colon \ca{C} \rightarrow \ca{C}_{\U_f} 
\]
 exhibits $\ca{C}_{\U_f}$ as reflective subcategory of $\ca{C}$. An object $M \in \ca{C}$ lies in the image of the right adjoint $\ca{C}_{\U_f} \rightarrow \ca{C}$ if and only if $M$ is $f$-local.
 \item[(v)] An object $M \in \ca{C}$ is $f$-regular if and only if $\eta_f \otimes M \colon M \rightarrow \U_f \otimes M$ is a monomorphism.
\item[(vi)] A finitely generated object $M \in \ca{C}$ satisfies $\U_f \otimes M \cong 0$ if and only if there exists an $n \in \mathbb{N}$ such that
\[
 M \otimes f^{\otimes n} \colon M \otimes I^{\otimes n} \rightarrow M
\]
 is the zero morphism.
\end{enumerate}
\end{lemma}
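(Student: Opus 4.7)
The plan is to reduce every part of the lemma to the explicit colimit description $\U_f \cong \colim_n (I^{\vee})^{\otimes n}$ recalled immediately above the statement, with transition maps of the form $f^{\vee} \otimes (I^{\vee})^{\otimes n}$. Two standing facts will do the work: filtered colimits in $\ca{C}$ are exact because $\ca{C}$ is lfp abelian, and tensoring with the invertible object $I$ preserves monomorphisms, epimorphisms and isomorphisms and (via the usual duality identity $f^{\vee} \cong f \otimes I^{\vee}$) identifies $f \otimes (-)$ with $f^{\vee} \otimes (-)$ up to invertible twist.

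For (i), I would unwind the pushout defining $\U_f$: giving an algebra map $\U_f \to B$ amounts to giving a morphism $g \colon I^{\vee} \to B$ in $\ca{C}$ satisfying $g \circ f^{\vee} = \eta_B$. The tensor-hom adjunction (using invertibility of $I$) identifies such a $g$ with a morphism $s \colon I^{\vee}_B \to B$ in $\ca{C}_B$, and the compatibility relation $g \circ f^{\vee} = \eta_B$ translates into $s \circ f^{\vee}_B = \id_B$. For (ii), the morphism $f^{\vee}_{\U_f}$ corresponds under the colimit description to the cofinal shift $n \mapsto n+1$ of the indexing $\mathbb{N}$, so it induces an isomorphism on colimits; twisting by the invertible $I$ then yields that $f_{\U_f}$ is also an isomorphism. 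Property (iii) follows by computing $\U_f \otimes \U_f \cong \colim_n (I^{\vee})^{\otimes n} \otimes \U_f$: by (ii) all the transition maps $f^{\vee} \otimes (I^{\vee})^{\otimes n} \otimes \U_f$ of this filtered system are isomorphisms, so the initial structure map $\eta_f \otimes \U_f$ of the colimit is itself an isomorphism.

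For (iv), property (iii) is the standard criterion ensuring that the forgetful functor $\ca{C}_{\U_f} \to \ca{C}$ is fully faithful, thereby promoting the base-change adjunction to a reflection; an object $M$ lies in the essential image iff $\eta_f \otimes M$ is an isomorphism, and by the same colimit computation this reduces to $f^{\vee} \otimes M$ (equivalently $f \otimes M$) being an isomorphism, i.e., to $M$ being $f$-local. Part (v) is the corresponding monomorphism statement: exactness of filtered colimits gives $\ker(\eta_f \otimes M) \cong \colim_n \ker\bigl((f^{\vee})^n \otimes M\bigr)$, which vanishes iff each $(f^{\vee})^n \otimes M$ is a monomorphism, equivalently iff $f^{\vee} \otimes M$ is, equivalently iff $M$ is $f$-regular. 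Finally, for (vi), the condition $\U_f \otimes M \cong 0$ forces $\eta_f \otimes M = 0$; the same kernel computation then expresses $M$ itself as the directed union $\bigcup_n K_n$ of the subobjects $K_n \defl \ker\bigl((f^{\vee})^n \otimes M\bigr)$, and finite generation of $M$ forces $K_n = M$ for some $n$, which after tensoring with the invertible $I^{\otimes n}$ becomes $M \otimes f^{\otimes n} = 0$. The converse implication is immediate: if $M \otimes f^{\otimes n} = 0$, then every structure map of the colimit computing $\U_f \otimes M$ eventually factors through zero, so the colimit itself vanishes.

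The main obstacles, beyond the careful bookkeeping between $f$ and $f^{\vee}$ via the invertibility of $I$, will be the justification in (iv) that the idempotence property (iii) really does force the forgetful functor from $\ca{C}_{\U_f}$ to be fully faithful, and in (vi) the use of the fact that finitely generated objects of the lfp abelian category $\ca{C}$ are compact with respect to directed unions of subobjects.
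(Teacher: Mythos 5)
Your proof is correct, and several of its individual arguments genuinely diverge from the paper's even though both rest on the colimit presentation $\U_f \cong \colim_n (I^{\vee})^{\otimes n}$ (your (i) and (iii) are essentially the paper's). For (ii) the paper does not use your shift/cofinality argument: it deduces from (i) that $f^{\vee}_{\U_f}$ has a section, hence that $f_{\U_f}$ is a split epimorphism, and then uses flatness of $\U_f$ together with monicity of $f$ to conclude it is also monic; your route buys independence from flatness. For (v) the paper proves the converse direction by observing that $M$ embeds into the $f$-local object $\U_f \otimes M$ and that $f$-regular objects are closed under subobjects, whereas your identification $\ker(\eta_f\otimes M)\cong\colim_n\ker\bigl((f^{\vee})^{\otimes n}\otimes M\bigr)$ handles both directions uniformly. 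For (vi) the paper lifts along a finitely presentable cover $M^{\prime}\twoheadrightarrow M$ and uses finite presentability of $M^{\prime}$ to detect, at a finite stage, that two maps into the colimit agree; this sidesteps the fact you invoke, namely that a finitely generated object (in the paper's sense of admitting an epimorphism from a finitely presentable one) cannot be a proper directed union of subobjects. That fact does hold in an lfp abelian category (pull the union back along the cover and use exactness of filtered colimits), but since it is not the paper's definition of finitely generated you should supply the argument. Two points to tighten: in (iv), the implication ``$\eta_f\otimes M$ invertible implies $M$ is $f$-local'' is not literally ``the same colimit computation'' --- the kernel computation gives monicity of $f\otimes M$, but for surjectivity you need the retract argument ($M$ is then a retract of $\U_f\otimes M$ and $f\otimes\U_f\otimes M$ is invertible by (ii)), since vanishing of a filtered colimit of cokernels does not force each cokernel to vanish. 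And the silent exchange of $f^{\vee}\otimes(I^{\vee})^{\otimes n}$ with $(I^{\vee})^{\otimes n}\otimes f^{\vee}$, on which your identification of $f^{\vee}_{\U_f}$ with the index shift depends, is licensed because the symmetry acts trivially on tensor powers of the line bundle $I^{\vee}$; this is worth saying once.
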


\begin{proof}
 The first claim follows from the construction of $\U_f$ and \cite[Proposition~5.3.2]{SCHAEPPI_COLIMITS}.

 To see that $f_{\U_f}$ is an isomorphism, note that by (i), $f^{\vee}_{\U_f}$ has a section, so its dual $f_{\U_f}$ is a split epimorphism. But $f$ is monic and $\U_f$ is flat, so $f_{\U_f}$ is also monic and therefore an isomorphism.

 The unit $\eta_f \colon \U \rightarrow \U_f$ is given by the colimit of the morphisms
\[
(f^{\vee})^{\otimes n}\colon \U \rightarrow (I^{\vee})^{\otimes n} \smash {\rlap{,}} 
\]
 so by (ii) it is sent to an isomorphism by $-\otimes \U_f$. This shows both that (iii) holds and that the multiplication $\mu \colon \U_f \otimes \U_f \rightarrow \U_f$ is an isomorphism.

 To see that (iv) holds, let $M$ be an $\U_f$-module. Then (as an object of $\ca{C}$), $M$ is a retract of $\U_f \otimes M$. Since
\[
 \U_f \otimes M \otimes f \colon  \U_f \otimes M \otimes I \rightarrow \U_f \otimes M
\]
 is an isomorphism, so is $M \otimes f$, that is, $M$ is $f$-local. Arguing as in the proof of (iii) above, this implies that $\eta_f \otimes M$ is an isomorphism. This proves in particular that $M$ underlies an $\U_f$-module if and only if it is $f$-local. Moreover, it shows that the action morphism $\U_f \otimes M \rightarrow M$ of any $\U_f$-module is an isomorphism. Thus the counit of the base change adjunction is invertible, which shows that $\ca{C}_{\U_f}$ is (equivalent to) the reflective subcategory of $f$-local objects.

 To see that (v) holds, note that if $M \otimes f$ is monic, then so is $\eta_f \otimes M$, again by arguing as in the proof of (iii) (using the fact that $\ca{C}$ is lfp abelian, hence in particular Grothendieck abelian). Conversely, if $\eta_f \otimes M$ is monic, then $M$ is a subobject of the $f$-local object $\U_f \otimes M$. But $f$-local objects are in particular $f$-regular and $f$-regular objects are closed under subobjects.

 Finally, to see (vi), let $M$ be a finitely generated object such that $\U_f \otimes M \cong 0$. Since $M$ is finitely generated, there exists a finitely presentable object $M^{\prime}$ together with an epimorphism $p \colon M^{\prime} \rightarrow M$. By definition of $\U_f$, the object $\U_f \otimes M$ the colimit of the (filtered) diagram
\[
 \xymatrix@C=50pt{M \ar[r]^-{f^{\vee} \otimes M} & I^{\vee} \otimes M \ar[r]^-{f^{\vee} \otimes I^{\vee}\otimes M } & (I^{\vee})^{\otimes 2} \otimes M \ar[r] & \ldots }
\]
 in $\ca{C}$. Both
\[
 \xymatrix{M^{\prime} \ar[r]^-{p} & M \ar[r] & \U_f \otimes M} \quad \text{and} \quad \xymatrix{M^{\prime} \ar[r]^-{0} & M \ar[r] & \U_f \otimes M}
\]
 are equal by assumption, so since $M^{\prime}$ is finitely presentable, they must be equal at some finite stage of the colimit. This shows that there exists an $n \in \mathbb{N}$ such that the composite
\[
 \xymatrix@C=40pt{M^{\prime} \ar[r]^-{p} & M \ar[r]^-{(f^{\vee})^{\otimes n} \otimes M} & (I^{\vee})^{\otimes n} \otimes M }
\]
 is zero. Since $p$ is an epimorphism we find that $(f^{\vee})^{\otimes n}\otimes M=0$. The claim that $f^{\otimes n} \otimes M$ is zero therefore follows from the fact that both $(I^{\vee})^{\otimes n}$ and $\U$ are invertible objects in $\ca{C}$.

 Conversely, if $f^{\otimes n} \otimes M =0$, then its base change
\[
 (f_{\U_f})^{\otimes_{\U_f} n} \ten{\U_f} M_{\U_f}
\]
 is the zero morphism as well. But $f_{\U_f}$ is an isomorphism by (ii), so we must have $M_{\U_f}=\U_f \otimes M \cong 0$.
\end{proof}

 The second crucial ingredient in the Beauville--Laszlo theorem is the infinitesimal neighbourhood $\widehat{Z}$ of the effective Cartier divisor $Z=\Spec_X(\U_X \slash I)$. This is again affine over $X$, so we can define the correponding algebra in any tensor category $\ca{C}$: we denote it by
\[
 \widehat{\U} \defl \lim\nolimits_n \U \slash I^n
\]
 where $\U \slash I^n$ is the cokernel of the morphism $f^{\otimes n} \colon I^{\otimes n} \rightarrow \U$.

 Infinite limits in categories of quasi-coherent sheaves are generally quite difficult to describe. For example, the Mittag-Leffler condition rarely implies exactness of the desired sequences, see \cite[Corollary~1.16]{ROOS_REVISITED}. For this reason we need to work a bit harder to establish the basic facts about $\widehat{\U}$ which are needed to prove the global version of the Beauville--Laszlo theorem. By working in a general tensor category, we avoid having to deal with the particulars of the construction of limits in categories of quasi-coherent sheaves.

 We write $p_n \colon \U \rightarrow \U \slash I^n$ for the cokernel of $f^{\otimes n}$ and $\widehat{\eta} \colon \U \rightarrow \widehat{\U}$ for the unit of $\widehat{\U}$ (that is, $\widehat{\eta}=\lim p_n$).

\begin{lemma}\label{lemma:properties_of_o_hat}
 Let $\ca{C}$ be a lfp abelian tensor category and let $f \colon I \rightarrow \U$ be an effective Cartier divisor in $\ca{C}$. Then the algebra $\widehat{\U}=\lim_n \U \slash I^n$ has the following properties:
\begin{enumerate}
 \item[(i)] The morphism $f_{\widehat{\U}} \colon I_{\widehat{\U}} \rightarrow \widehat{\U}$ is a monomorphism.
\item[(ii)] For each $n \in \mathbb{N}$, the morphism
\[
 \widehat{\eta} \otimes \U \slash I^n \colon \U \slash I^n \rightarrow \widehat{\U} \otimes \U \slash I^n
\]
 is an isomorphism.
\item[(iii)] If $f^{\otimes n} \otimes M \colon I^{\otimes n} \otimes M \rightarrow M$ is the zero morphism for some $n \in \mathbb{N}$, then $\widehat{\eta} \otimes M$ is an isomorphism.
\end{enumerate}
\end{lemma}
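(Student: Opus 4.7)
I would prove the three claims in the order (i), (ii), (iii), with (ii) relying on (i) and (iii) on (ii).

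For (i), the plan is to show $\ker(f \otimes \widehat{\U}) = 0$ by computing it as an inverse limit of kernels whose transition maps all vanish. Since $I$ is invertible, $I \otimes -$ commutes with all limits, so $I \otimes \widehat{\U} \cong \lim_k (I \otimes \U \slash I^k)$ and consequently $\ker(f \otimes \widehat{\U}) \cong \lim_k K_k$ where $K_k \defl \ker(f \otimes \U\slash I^k)$. Tensoring the short exact sequence $0 \to I^{\otimes k} \to \U \to \U\slash I^k \to 0$ with the flat object $I$ identifies $I \otimes \U\slash I^k$ with $I\slash I^{\otimes (k+1)}$, and a direct diagram chase identifies $K_k$ with the subobject $I^{\otimes k}\slash I^{\otimes (k+1)}$ of $I\slash I^{\otimes (k+1)}$. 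The transition $K_{k+1} \to K_k$ is the restriction of the canonical projection $I\slash I^{\otimes (k+2)} \to I\slash I^{\otimes (k+1)}$, which kills $I^{\otimes (k+1)}$; hence every transition map is zero and $\lim K_k = 0$.

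For (ii), I first observe that $q_n \colon \widehat{\U} \to \U\slash I^n$ is an epimorphism because $p_n = q_n \circ \widehat{\eta}$ is a cokernel. The strategy is then to derive the short exact sequence
\[0 \to I^{\otimes n} \otimes \widehat{\U} \xrightarrow{f^{\otimes n} \otimes \widehat{\U}} \widehat{\U} \xrightarrow{q_n} \U\slash I^n \to 0\]
by taking $\lim_k$ of the exact sequences $0 \to I^{\otimes n}\slash I^{\otimes k} \to \U\slash I^k \to \U\slash I^n \to 0$ for $k \geq n$. Invertibility of $I^{\otimes n}$ provides an isomorphism $I^{\otimes n}\slash I^{\otimes k} \cong I^{\otimes n} \otimes \U\slash I^{\otimes (k-n)}$ and allows $I^{\otimes n} \otimes -$ to commute with the limit, so $\lim_k I^{\otimes n}\slash I^{\otimes k} \cong I^{\otimes n} \otimes \widehat{\U}$, with inclusion into $\widehat{\U}$ given by $f^{\otimes n} \otimes \widehat{\U}$. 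Part (i), applied iteratively via the decomposition $f^{\otimes n} = (f \otimes \U) \circ (I \otimes f^{\otimes (n-1)})$ together with flatness of $I$, shows this map is monic. Right exactness of $\widehat{\U} \otimes -$ then identifies $\widehat{\U} \otimes \U\slash I^n$ with the cokernel of $f^{\otimes n} \otimes \widehat{\U}$, which by the SES is $\U\slash I^n$ via the induced map $\bar{q}_n \colon \widehat{\U}\slash I^n \widehat{\U} \to \U\slash I^n$. Since $\bar{q}_n \circ (\widehat{\eta} \otimes \U\slash I^n) = \mathrm{id}$ and $\bar{q}_n$ is iso, so is $\widehat{\eta} \otimes \U\slash I^n$.

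For (iii), the hypothesis $f^{\otimes n} \otimes M = 0$ applied to the right exact sequence $I^{\otimes n} \otimes M \to M \to \U\slash I^n \otimes M \to 0$ yields $p_n \otimes M \colon M \cong \U\slash I^n \otimes M$. Tensoring on the left with $\widehat{\U}$ and invoking (ii) gives a chain of isomorphisms $\widehat{\U} \otimes M \cong \widehat{\U} \otimes \U\slash I^n \otimes M \cong \U\slash I^n \otimes M \cong M$, and a naturality square identifies the overall composite with $\widehat{\eta} \otimes M$. The main obstacle is the careful commutation of tensor products with infinite limits in steps (i) and (ii); this is ultimately controlled by flatness (and invertibility) of $I$ and its powers, but requires precise bookkeeping of the categorical identifications throughout.
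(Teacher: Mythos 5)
Your proof is correct and follows essentially the same route as the paper's: commuting $I^{\otimes n} \otimes -$ past the limit via invertibility, exploiting left exactness of $\lim$ against right exactness of $\widehat{\U} \otimes -$ for (ii), and deducing (iii) from (ii) via the square relating $p_n \otimes M$ and $\widehat{\eta} \otimes M$. The only divergence is cosmetic and occurs in (i): the paper factors $\U \slash I^k \otimes f$ as a monomorphism $g_k$ followed by the reindexing map $p_{k+1,k}$ and takes limits, whereas you compute $\ker(f \otimes \widehat{\U})$ directly as $\lim_k I^{\otimes k} \slash I^{\otimes (k+1)}$ with vanishing transition maps; both rest on the same snake-lemma identification.
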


\begin{proof}
 Since $I$ is invertible, $\widehat{\U} \otimes f$ is isomorphic to $\lim (\U \slash I^n \otimes f)$. From the universal property of cokernels we get a morphism $g_n \colon I \otimes \U \slash I^n \rightarrow \U \slash I^{n+1}$ such that the diagram
\[
 \xymatrix@C=35pt{I^n \otimes I \ar@{=}[d] \ar[r]^-{f^n \otimes I} & I \ar[d]^f \ar[r]^-{ p_n \otimes I} &  \U \slash I^n \otimes I \ar@{-->}[d]^{g_n} \\ I^{n+1} \ar[r]_-{f^{n+1}} & \U \ar[r]_-{p_{n+1}} & \U \slash I^{n+1} }
\]
 is commutative. Moreover, from the snake lemma and the above diagram it follows that $g_n$ is monic. By construction we have $\U \slash I^n \otimes f=p_{n+1,n} \circ g_n$, where $p_{n+1,n} \colon \U \slash I^{n+1} \rightarrow \U \slash I^n$ denotes the canonical morphism used in the definition of $\widehat{\U}$.

 The limit $\lim(g_n)$ is monic since limits preserve monomorphism. Moreover, $\lim(p_{n+1,n})$ is an isomorphism (this is a fact about limits of countable chains valid in any category whatsoever). This proves that (i) holds.

 To see that (ii) holds, first note that the right exact sequence
\begin{equation}\label{eqn:limit_right_ex_seq}
 \xymatrix@C=35pt{ \widehat{\U} \otimes I^n \ar[r]^-{\widehat{\U}\otimes f^n } & \widehat{\U} \ar[r]^-{\widehat{\U} \otimes p_n} & \widehat{\U} \otimes \U \slash I^n \ar[r] & 0 }
\end{equation}
 is exact by (i). On the other hand, since limits commute with kernels, we have a \emph{left} exact sequence
\begin{equation}\label{eqn:limit_left_ex_seq}
 \xymatrix@C=22pt{0 \ar[r] & \lim (\U \slash I^k \otimes I^n) \ar[rr]^-{\lim (\U \slash I^k \otimes f^n)} && \lim \U \slash I^k \ar[rr]^-{\lim (\U \slash I^k \otimes p_n)} && \lim(\U \slash I^k \otimes \U \slash I^n) }
\end{equation}
 for each fixed $n \in \mathbb{N}$. Moreover, $\lim_k (\U \slash I^k \otimes f^n)$ is isomorphic to $\widehat{\U} \otimes f^n$ since $I^n$ is invertible: the functor $-\otimes I^n$ preserves \emph{all} limits.

 We claim that there exists an isomorphism $\lim_k (\U \slash I^k \otimes \U \slash I^n) \cong \U \slash I^n$ such that the diagram
\[
 \xymatrix@C=40pt{\widehat{\U} \ar[r]^-{\lim(1 \otimes p_n)} & \lim(\U \slash I^k \otimes \U \slash I^n) \ar[d]^{\cong} \\ \U \ar[u]^{\widehat{\eta}} \ar[r]_-{p_n} & \U \slash I^n }
\]
 is commutative. To see this, it suffices to check that the system
\[
 \xymatrix@C=35pt{\U \ar[r]^-{p_k} & \U \slash I^k \ar[r]^-{\U \slash I^k \otimes p_n} & \U \slash I^k \otimes \U \slash I^n}
\]
 is isomorphic to $p_n \colon \U \rightarrow \U \slash I^n$ for $k$ sufficiently large. But as soon as $k \geq n$, we have $f^k \otimes \U \slash I^n=0$, hence $p_k \otimes \U \slash I^n$ is an isomorphism. Thus the limit of 
\[
p_k \otimes \U \slash I^n \colon \U \slash I^n \rightarrow \U \slash I^k \otimes \U \slash I^n 
\]
 induces (the inverse of) the desired isomorphism.

 This shows in particular that the left exact sequence \eqref{eqn:limit_left_ex_seq} is exact. Combining the commutative square above with the exact sequences \eqref{eqn:limit_right_ex_seq} and \eqref{eqn:limit_left_ex_seq} we find that there exists an isomorphism $\widehat{\U} \otimes \U \slash I^n \cong \U \slash I^n$ such that the diagram
\[
 \xymatrix{\widehat{\U} \ar[r]^-{\widehat{\U} \otimes p_n} & \widehat{\U} \otimes \U \slash I^n \ar[d]^{\cong} \\ \U \ar[u]^{\widehat{\eta}} \ar[r]_-{p_n} & \U \slash I^n }
\]
 is commutative. Since $p_n$ is an epimorphism, the inverse of the isomorphism $\widehat{\U} \otimes \U \slash I^n \rightarrow \U \slash I^n$ must be equal to $\widehat{\eta} \otimes \U \slash I^n$.

 Finally, to see (iii), note that $\widehat{\eta} \otimes \U \slash I^n \otimes M$ is an isomorphism by (ii). The assumption on $M$ implies that $p_n \otimes M \colon M \rightarrow \U \slash I^n \otimes M$ is an ismorphism as well. Thus $\widehat{\eta} \otimes M$ must be an isomorphism since the diagram
\[
 \xymatrix@C=35pt{M \ar[d]_{\widehat{\eta} \otimes M} \ar[r]^-{p_n \otimes M}_-{\cong} & \U \slash I^n \otimes M \ar[d]^{\widehat{\eta} \otimes \U \slash I^n \otimes M }_{\cong}  \\
 \widehat{\U} \otimes M \ar[r]_-{\widehat{\U} \otimes p_n \otimes M}^-{\cong} & \widehat{\U} \otimes \U \slash I^n \otimes M}
\]
 is commutative.
\end{proof}

 From now on, the proof of the global Beauville--Laszlo theorem follows the proof strategy of the affine case in \cite{BEAUVILLE_LASZLO}, replacing the algebras $A$, $A\slash(f^n)$, $A_f$, and $\widehat{A}$ by $\U$, $\U \slash I^n$, $\U_f$, and $\widehat{\U}$ in $\ca{C}$. Since this argument is quite involved and requires some modifications, we provide full details below. The following gives a global version of \cite[Lemme~1]{BEAUVILLE_LASZLO}.

\begin{lemma}\label{lemma:faithfulness_of_o_f_and_o_hat}
 Let $\ca{C}$ be a lfp abelian tensor category and let $f \colon I \rightarrow \U$ be an effective Cartier divisor in $\ca{C}$. For any object $M \in \ca{C}$ with $\U_f \otimes M \cong 0$, the morphism $\widehat{\eta} \otimes M \colon M \rightarrow \widehat{\U} \otimes M$ is an isomorphism. In particular, the algebras $\U_f$ and $\widehat{\U}$ jointly detect non-zero objects: if $\U_f\otimes M \cong 0$ and $\widehat{\U} \otimes M \cong 0$, then $M \cong 0$. 
\end{lemma}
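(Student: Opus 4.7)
The plan is to reduce to the case of finitely generated subobjects, where parts (vi) of Lemma~\ref{lemma:properties_of_o_f} and (iii) of Lemma~\ref{lemma:properties_of_o_hat} can be combined directly. First, I would write $M$ as the filtered colimit (union) of its finitely generated subobjects $\iota_i \colon M_i \hookrightarrow M$; this is possible because $\ca{C}$ is lfp abelian. Since $\U_f$ is flat in $\ca{C}$ (noted explicitly after its construction in \S\ref{section:beauville_laszlo_constructions}), tensoring with $\U_f$ preserves monomorphisms, so each $\U_f \otimes \iota_i$ is monic into $\U_f \otimes M \cong 0$; hence $\U_f \otimes M_i \cong 0$ for every $i$.

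With each $M_i$ finitely generated and satisfying $\U_f \otimes M_i \cong 0$, Lemma~\ref{lemma:properties_of_o_f}(vi) provides an $n_i \in \mathbb{N}$ such that $f^{\otimes n_i} \otimes M_i \colon I^{\otimes n_i} \otimes M_i \rightarrow M_i$ is zero. Lemma~\ref{lemma:properties_of_o_hat}(iii) then yields that $\widehat{\eta} \otimes M_i$ is an isomorphism for every~$i$.

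To conclude, I would note that $\widehat{\U} \otimes -$ is a left adjoint (namely the base change $(-)_{\widehat{\U}}$ from \S\ref{section:background_tensor}), so it preserves the filtered colimit $M \cong \colim_i M_i$. The natural transformation $\widehat{\eta} \otimes -$ assembles the maps $\widehat{\eta} \otimes M_i$ into a diagram whose colimit is $\widehat{\eta} \otimes M$. A filtered colimit of isomorphisms is an isomorphism, so $\widehat{\eta} \otimes M \colon M \rightarrow \widehat{\U} \otimes M$ is an isomorphism. For the ``in particular'' clause, if both $\U_f \otimes M \cong 0$ and $\widehat{\U} \otimes M \cong 0$, then $\widehat{\eta} \otimes M$ identifies $M$ with $\widehat{\U} \otimes M \cong 0$, giving $M \cong 0$.

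There is no serious obstacle; the only subtlety is the passage from the finite-generation hypothesis of Lemma~\ref{lemma:properties_of_o_f}(vi) to the general case, which is handled cleanly by the flatness of $\U_f$ together with the fact that $\widehat{\U} \otimes -$ preserves filtered colimits. Note that we do \emph{not} need $\widehat{\U}$ to be flat, as we only use colimit-preservation and not preservation of monomorphisms on the completion side.
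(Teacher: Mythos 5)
Your proof is correct and follows essentially the same route as the paper's: write $M$ as a directed union of finitely generated subobjects, use flatness of $\U_f$ to see each $\U_f \otimes M_i \cong 0$, combine Part~(vi) of Lemma~\ref{lemma:properties_of_o_f} with Part~(iii) of Lemma~\ref{lemma:properties_of_o_hat}, and pass to the colimit. Your closing remark that only colimit-preservation (not flatness) of $\widehat{\U} \otimes -$ is needed is accurate and matches the paper's use of the same fact.
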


\begin{proof}
 Suppose that $\U_f \otimes M \cong 0$ and write $M$ as directed union of finitely generated subobjects $M_i$. Since $\U_f$ is flat, we have $\U_f \otimes M_i \cong 0$ as well. Thus for each $i$ there exists a $n_i \in \mathbb{N}$ such that $f^{\otimes n_i} \otimes M_i$ is the zero morphism (see Part~(vi) of Lemma~\ref{lemma:properties_of_o_f}). From Part~(iii) of Lemma~\ref{lemma:properties_of_o_hat} it follows that $\widehat{\eta} \otimes M_i$ is an isomorphism for all $i$. Since tensor products commute with colimits, this implies that $\widehat{\eta} \otimes M$ is an isomorphism. In particular, if we also have $\widehat{\U} \otimes M \cong 0$, then $M \cong 0$, as claimed.  
\end{proof}

 Before we can proceed to prove the global versions of the remaining lemmas in \cite{BEAUVILLE_LASZLO} we need to establish some basic facts about flat objects in abelian categories.

\subsection{Abelian categories with enough flat objects}
 It is well-known that in any symmetric monoidal abelian categories with enough flat objects the theory of Tor-functors can be developed. The author was however unable to find a precise reference for the exact statement needed in our context. While the results follow more or less directly from the machinery developed in \cite{SGA_XVII}, it is also possible to give a more elementary account in the case we care about. We therefore sketch a geodesic proof of the basic statements in the theory of Tor-functors which closely follows the development of the classical case for module categories. The main difference is that we do not have enough projectives, so we have to work a bit harder to show that our functors do not depend on the chosen flat resolution. Recall that an object $M$ in a symmetric (or braided) monoidal abelian category is called flat if $M \otimes -$ is exact. We say that $\ca{C}$ \emph{has enough flat objects} if for all $M \in \ca{C}$ there exists a flat object $F$ and an epimorphism $F \rightarrow M$.

\begin{lemma}\label{lemma:exact_sequences_with_flat_objects}
 Let $\ca{C}$ be a symmetric (or braided) monoidal abelian category such that the functor $M \otimes -$ is right exact for each $M \in \ca{C}$. Then for any $C \in \ca{C}$ and any exact sequence
\begin{equation}\label{eqn:exact_seq_with_flat}
 \xymatrix{0 \ar[r] & L \ar[r] & M \ar[r] & N \ar[r] & 0}  
\end{equation}
 where $M$ and $N$ are flat, the sequence
\begin{equation}\label{eqn:tensored_exact_seq_with_flat}
 \xymatrix{0 \ar[r] & C\otimes L \ar[r] & C\otimes M \ar[r] & C\otimes N \ar[r] & 0}
\end{equation}
 is exact. Moreover, the object $L$ is flat as well.
\end{lemma}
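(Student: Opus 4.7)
The plan is to prove the two claims in sequence. Since $C \otimes -$ is right exact by hypothesis, the sequence $C \otimes L \to C \otimes M \to C \otimes N \to 0$ is automatically exact at $C \otimes M$ and $C \otimes N$, so the real content of the first claim is that $C \otimes L \to C \otimes M$ is a monomorphism. Exploiting the standing context that $\ca{C}$ has enough flat objects, I would choose an epimorphism $F \twoheadrightarrow C$ with $F$ flat, let $K$ be its kernel, and tensor $0 \to K \to F \to C \to 0$ with $0 \to L \to M \to N \to 0$ to produce a commutative $3 \times 3$ diagram. In that diagram the row indexed by $F$ is short exact because $F$ is flat, the columns indexed by $M$ and $N$ are short exact because $M$ and $N$ are flat, while the remaining rows and columns are only right exact.

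I would then extract the injectivity of $C \otimes L \to C \otimes M$ by the following chase. Start with $x \in C \otimes L$ whose image in $C \otimes M$ vanishes; lift it to $\tilde{x} \in F \otimes L$ via the surjection $F \otimes L \twoheadrightarrow C \otimes L$ and push it into $F \otimes M$. Its image there dies in $C \otimes M$, so by short exactness of the $M$-column it comes from some $z \in K \otimes M$. Chasing $z$ through the short exact $N$-column and using that $L \to M \to N$ is the zero composite forces $z$ to vanish in $K \otimes N$; by right-exactness of the $K$-row (which yields exactness in the middle), $z$ then lifts to some $w \in K \otimes L$. Finally, injectivity of $F \otimes L \to F \otimes M$ from short exactness of the $F$-row forces $\tilde{x}$ to equal the image of $w$ in $F \otimes L$, and since $K \otimes L \to F \otimes L \to C \otimes L$ is the zero composite, we conclude $x = 0$.

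With the first claim in hand, flatness of $L$ follows formally. For any monomorphism $A \hookrightarrow B$, applying the first claim with $C = A$ and with $C = B$ produces a commutative diagram with short exact rows
\[
\xymatrix{0 \ar[r] & L \otimes A \ar[r] \ar[d] & M \otimes A \ar[r] \ar[d] & N \otimes A \ar[r] \ar[d] & 0 \\ 0 \ar[r] & L \otimes B \ar[r] & M \otimes B \ar[r] & N \otimes B \ar[r] & 0}
\]
in which the middle and right vertical maps are monomorphisms by flatness of $M$ and $N$. A short four-lemma chase then forces $L \otimes A \to L \otimes B$ to be a monomorphism as well, as required.

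The main obstacle is the first step: all three pieces of flatness (of $F$, $M$, and $N$) must be coordinated in a single $3 \times 3$ chase, and in particular the intermediate check that $z$ vanishes in $K \otimes N$ is not immediate from any one column or row but requires a detour routing through the $F$-row to exploit the vanishing of the composite $L \to M \to N$.
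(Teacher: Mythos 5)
Your proof is correct, and it reaches the same two conclusions by essentially the same strategy: resolve $C$ by flat objects to get injectivity of $C\otimes L \rightarrow C\otimes M$, then deduce flatness of $L$ by comparing $L\otimes A \rightarrow L\otimes B$ with the monomorphism $M\otimes A \rightarrow M\otimes B$ through the now-injective horizontal maps (your ``short four-lemma chase'' is exactly the paper's two-line argument for the second claim). The only genuine difference is in the mechanics of the first half. The paper writes $C$ as the cokernel of a morphism $C_0 \rightarrow C_1$ between \emph{two} flat objects and applies the snake lemma to the induced map of short exact rows; flatness of $M$ and $N$ enters by identifying the kernels of the vertical maps with $\ker(C_0 \rightarrow C_1)\otimes M$ and $\ker(C_0\rightarrow C_1)\otimes N$, so that the map between them is epi and the connecting homomorphism into $C\otimes L$ vanishes. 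You instead take a single flat cover $F \twoheadrightarrow C$ with (possibly non-flat) kernel $K$ and run the $3\times 3$ chase directly; this is marginally more economical, since you never need the kernel of the cover to be resolved further, and every step of your chase checks out. Two small remarks that apply to both arguments equally: the lemma as stated does not hypothesize that $\ca{C}$ has enough flat objects, yet both proofs require it to produce the flat cover of $C$ (harmless, since the lemma is only ever applied in that setting, but worth flagging); and the element-chasing should be understood via generalized elements or an exact embedding of the small abelian subcategory spanned by the diagram, just as for the paper's use of the snake lemma.
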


\begin{proof}
 To see that the sequence \eqref{eqn:tensored_exact_seq_with_flat} is exact, first write $C$ as cokernel of a morphism $C_0 \rightarrow C_1$ with $C_0$ and $C_1$ flat. Applying the snake lemma to the diagram
\[
 \xymatrix{0 \ar[r] & C_0 \otimes L \ar[r] \ar[d] & C_0 \otimes M \ar[r] \ar[d] & C_0 \otimes N \ar[r] \ar[d] & 0 \\
0 \ar[r] & C_1\otimes L \ar[r] & C_1\otimes M \ar[r] & C_1\otimes N \ar[r] & 0}
\]
 and using the fact that $M$ and $N$ are flat we find that the sequence \eqref{eqn:tensored_exact_seq_with_flat} is indeed exact.

 To see that $L$ is flat, let $i \colon C \rightarrow C^{\prime}$ be a monomorphism. The horizontal morphisms in the diagram
\[
 \xymatrix{C \otimes L \ar[d]_{i \otimes L} \ar[r] & C \otimes M \ar[d]^{i \otimes M} \\ C^{\prime} \otimes L \ar[r] & C^{\prime} \otimes M}
\]
 are monic by exactness of \eqref{eqn:tensored_exact_seq_with_flat}. The claim that $i \otimes L$ is monic therefore follows from the fact that $M$ is flat.
\end{proof}

 Using this we can check that the evident definition of Tor-functors---using flat resolutions rather than projective ones---is well-defined. Recall that a \emph{flat resolution} $M_\bullet \rightarrow M$ of an object $M \in \ca{C}$ is an exact sequence
\[
 \xymatrix{\ldots \ar[r] & M_i \ar[r] & \ldots \ar[r] & M_0 \ar[r] & M \ar[r] &0}
\]
 such that each $M_i$ is flat for all $i \in \mathbb{N}$.

\begin{lemma}\label{lemma:independece_of_flat_resolution}
 Let $\ca{C}$ be an abelian category with symmetric (or braided) monoidal structure such that the functor $M \otimes -$ is right exact for each $M\in \ca{C}$. Suppose that $\ca{C}$ has enough flat objects. Let $M_\bullet$ and $M^{\prime}_\bullet$ be flat resolutions of $M$. Then there exists a flat resolution $F_\bullet$ of $M$, together with morphisms
\[
 \xymatrix{M_{\bullet} & F_{\bullet} \ar[l]_-{p_{\bullet}} \ar[r]^-{q_{\bullet}} & M^{\prime}_{\bullet} }
\]
 which are both quasi-isomorphisms and levelwise epimorphisms. Moreover, for any $N \in \ca{C}$, the morphisms $p_{\bullet} \otimes N$ and $q_{\bullet} \otimes N$ are quasi-isomorphisms.
\end{lemma}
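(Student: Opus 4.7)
The plan is to build $F_\bullet$ inductively as flat covers of iterated pullbacks, and then use Lemma~\ref{lemma:exact_sequences_with_flat_objects} repeatedly to transport acyclicity through $-\otimes N$.

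First I would construct $F_0$: the pullback $P_0 \defl M_0 \pb{M} M_0'$ in $\ca{C}$ has epimorphisms to $M_0$ and $M_0'$ whose composites to $M$ agree, since pullbacks of epimorphisms are epimorphisms in an abelian category. Choosing a flat object $F_0$ with an epimorphism onto $P_0$ yields the degree-zero piece together with the first components $p_0$ and $q_0$. In the inductive step, having built $F_i, p_i, q_i$ for $i \leq n$, I would set $K \defl \ker(F_n \to F_{n-1})$, $K_{n+1} \defl \ker(M_n \to M_{n-1})$, and $K_{n+1}' \defl \ker(M_n' \to M_{n-1}')$ (with the convention $F_{-1} = M_{-1} = M_{-1}' = M$). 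The induced maps $K \to K_{n+1}$ and $K \to K_{n+1}'$ are epimorphisms by the snake lemma, while $M_{n+1} \to K_{n+1}$ and $M_{n+1}' \to K_{n+1}'$ are epimorphisms because the two resolutions are exact. Then form the iterated pullback $Q_{n+1} \defl M_{n+1} \pb{K_{n+1}} K \pb{K_{n+1}'} M_{n+1}'$ and choose a flat cover $F_{n+1} \twoheadrightarrow Q_{n+1}$; this produces $p_{n+1}$, $q_{n+1}$, and a differential $F_{n+1} \to F_n$ whose image is $K$, so that $F_\bullet$ is exact at $F_n$. By construction $F_\bullet$ is a flat resolution of $M$ and the comparisons $p_\bullet, q_\bullet$ are levelwise epimorphisms and hence quasi-isomorphisms between flat resolutions of $M$.

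For the final assertion I would pass to the kernel complex $C_\bullet \defl \ker(p_\bullet)$. Applying Lemma~\ref{lemma:exact_sequences_with_flat_objects} to the short exact sequences $0 \to C_n \to F_n \to M_n \to 0$ shows that each $C_n$ is flat and that $-\otimes N$ preserves these sequences degreewise. It therefore suffices to prove that $C_\bullet \otimes N$ is acyclic. Since $p_\bullet$ is a quasi-isomorphism, $C_\bullet$ itself is acyclic; decomposing it into the short exact sequences $0 \to Z_{n+1} \to C_{n+1} \to Z_n \to 0$ of cycles (with $Z_0 = C_0$), an induction on $n$ using Lemma~\ref{lemma:exact_sequences_with_flat_objects} shows that each $Z_n$ is flat and that each such sequence remains exact after $-\otimes N$, which assembles into the acyclicity of $C_\bullet \otimes N$. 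The argument for $q_\bullet$ is identical.

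I expect the main obstacle to be the verification in the inductive step that the differential $F_{n+1} \to F_n$ coming from the flat cover of $Q_{n+1}$ has image exactly the kernel $K$; this is precisely the reason for taking a triple pullback rather than something simpler like a flat cover of $M_{n+1} \oplus M_{n+1}'$, since the lift must land in $K$ while being simultaneously compatible with the already-constructed $p_n$ and $q_n$. Everything else is a bookkeeping exercise using the snake lemma and Lemma~\ref{lemma:exact_sequences_with_flat_objects}.
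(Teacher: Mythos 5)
Your proposal is correct and takes essentially the same route as the paper: $F_\bullet$ is built degreewise as a flat cover of a fibre product mapping compatibly onto $M_{n+1}$, $M'_{n+1}$, and the kernel of the previously constructed differential (the paper packages this as a recursion on resolutions of $P_0=\ker(F_0\to M)$, but the resulting fibre products coincide with your triple pullbacks), and the final tensoring claim is handled identically via the flat acyclic kernel complex $\ker(p_\bullet)$ and Lemma~\ref{lemma:exact_sequences_with_flat_objects}. The only step stated a bit too quickly is the surjectivity of $K\to K_{n+1}$ for $n\geq 1$, which uses the acyclicity of $\ker(p_{\leq n})$ (from the long exact homology sequence) in addition to the snake lemma, but this is routine.
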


\begin{proof}
 Using the fact that $\ca{C}$ has enough flat objects, we can find a flat object $F_0$ as in the lower right square of the diagram
\[
 \xymatrix{P_0 \ar@{=}[r] \ar@{=}[d] & P_0 \ar[d] \ar@{->>}[r] & K_0^{\prime} \ar[d]\\ 
 P_0 \ar[r] \ar@{->>}[d] & F_0 \ar@{->>}[d] \ar@{->>}[r] & M_0^{\prime} \ar@{->>}[d] \\
 K_0 \ar[r] & M_0 \ar@{->>}[r] & M
}
\]
 in $\ca{C}$. We define $P_0$ as pullback of the kernel $K_0$ (equivalently, of the kernel $K_0^{\prime}$). By omitting $M_0$ (respectively $M_0^{\prime}$), we obtain flat resolutions of $K_0$ (respectively of $K_0^{\prime}$). We can pull these back to obtain two flat resolutions of $P_0$. More precisely, the pullback $Q \rightarrow P_0$ of the epimorphism $M_1 \rightarrow K_0$ has kernel (isomorphic to) the kernel $K_1$ of $M_1 \rightarrow M_0$, so we can use the $M_i$, $i \geq 2$ to complete $Q \rightarrow P_0$ to a resolution of $P_0$. The object $Q$ fits in the pullback diagram
\[
\xymatrix{Q \ar[r] \ar@{->>}[d] & F_0 \ar@{->>}[d] \\ M_1 \ar[r] & M_0 } 
\]
 and is therefore flat by Lemma~\ref{lemma:exact_sequences_with_flat_objects}, so we indeed obtain a flat resolution of $P_0$. The same construction can be applied to the resolution $M_{\bullet}^{\prime}$.

 By iterating this procedure, we obtain a flat resolution $F_{\bullet}$ of $M$, together with levelwise epimorphisms $p_{\bullet}$ and $q_{\bullet}$ to $M_{\bullet}$ and $M^{\prime}_{\bullet}$. These are easily seen to be quasi-isomorphisms: by construction we have $H_i(F_{\bullet})\cong 0$ for $i > 0$, so we only need to check that $H_0(F_{\bullet})$, the cokernel of $F_1 \rightarrow F_0$, is isomorphic to $M$. This follows from the fact that $P_0$ is the kernel of the epimorphism $F_0 \rightarrow M$.

 It remains to check that for any $N \in \ca{C}$, the two morphisms $p_{\bullet} \otimes N$ and $q_{\bullet} \otimes N$ are quasi-isomorphisms. Let $L_{\bullet}$ be the kernel of $p_{\bullet}$. By Lemma~\ref{lemma:exact_sequences_with_flat_objects}, $L_i$ is flat for all $i \in \mathbb{N}$. Moreover, the long exact homology sequence induced by the exact sequence
\[
 \xymatrix{0 \ar[r] & L_{\bullet} \ar[r] & F_{\bullet} \ar[r] & M_{\bullet} \ar[r] & 0 }
\]
 shows that $L_{\bullet}$ is exact. This implies that the kernel of each morphism $L_{i+1} \rightarrow L_i$ is flat (by using induction on $i$ and the second statement of Lemma~\ref{lemma:exact_sequences_with_flat_objects}). From the first statement of Lemma~\ref{lemma:exact_sequences_with_flat_objects} it therefore follows that $L_{\bullet} \otimes N$ is exact.

 The long exact homology sequence induced by the exact sequence
\[
 \xymatrix{0 \ar[r] & L_{\bullet} \otimes N \ar[r] & F_{\bullet} \otimes N \ar[r]^-{p_{\bullet} \otimes N} & M_{\bullet} \otimes N \ar[r] & 0 }
\]
 of complexes therefore shows that $p_{\bullet} \otimes N$ is a quasi-isomorphism. The same reasoning applies to $q_{\bullet} \otimes N$.
\end{proof}

 Note that a Grothendieck abelian category with enough flat objects in particular has a generating set consisting of flat objects. Using such a generating set and the fact that filtered colimits are exact, we can find \emph{functorial} flat resolutions
\[
F(M)_{\bullet} \rightarrow M 
\]
 of $M \in \ca{C}$ by iteratively taking the (infinite) direct sum over all morphisms whose domain is one of the flat generators and whose target is $M$ (respectively the kernel of $F(M)_{i+1} \rightarrow F(M)_{i}$).

\begin{dfn}\label{dfn:tor_functors}
 Let $\ca{C}$ be a symmetric monoidal closed Grothendieck abelian category with enough flat objects. Then the \emph{$n$-th Tor-functor}
\[
 \Tor_n^{\ca{C}}(-,-) \colon \ca{C}\times \ca{C} \rightarrow \ca{C}
\]
 is defined by $Tor_n^{\ca{C}}(M,N)=H_n\bigl( F(M)_{\bullet}\otimes N \bigr)$.
\end{dfn}

 Using the above two lemmas we can easily check that these Tor-functors have the expected properties.

\begin{prop}\label{prop:tor_properties}
 Let $\ca{C}$ be a symmetric monoidal closed Grothendieck abelian category with enough flat objects. Let $M$ and $N$ be objects in $\ca{C}$ with flat resolutions $M_{\bullet} \rightarrow M$ and $N_{\bullet} \rightarrow N$.
\begin{enumerate}
 \item[(i)] There exists a natural isomorphism $\Tor_0^{\ca{C}}(M,N) \cong M \otimes N$.
\item[(ii)] For each short exact sequence
\[
 \xymatrix{0 \ar[r] & A \ar[r] & B \ar[r] & C \ar[r] & 0 }
\]
 in $\ca{C}$ there is a long exact sequence
\[
 \xymatrix@C=15pt{\ldots \ar[r] & \Tor_n^{\ca{C}}(M,A) \ar[r] & \Tor_n^{\ca{C}}(M,B) \ar[r] & \Tor_n^{\ca{C}}(M,C) \ar[r] & \Tor_{n-1}^{\ca{C}}(M,A) \ar[r] & \ldots }
\]
 in $\ca{C}$.
\item[(iii)] There are isomorphisms $\Tor_n^{\ca{C}}(M,N) \cong H_n(M_{\bullet} \otimes N)\cong H_n(M \otimes N_{\bullet})$. In particular, $\Tor_n^{\ca{C}}(M,N) \cong \Tor_n^{\ca{C}}(N,M)$, that is, the Tor-functors are symmetric.
\item[(iv)] An object $M \in \ca{C}$ is flat if and only if $\Tor_1^{\ca{C}}(M,C) \cong 0$ for all objects $C \in \ca{C}$.
\item[(v)] If in the short exact sequence
\[
 \xymatrix{0 \ar[r] & A \ar[r] & B \ar[r] & C \ar[r] & 0 }
\]
 the object $C$ is flat and $L \in \ca{C}$ is an arbitrary object, then the sequence
\[
 \xymatrix{0 \ar[r] & L\otimes A \ar[r] & L\otimes B \ar[r] & L\otimes C \ar[r] & 0 } 
\]
 is exact.
\item[(vi)] If $M \cong \colim M_i$ is a filtered colimit of objects $M_i$ and $N$ is an object such that $\Tor_n^{\ca{C}}(M_i, N) \cong 0$, then $\Tor_n^{\ca{C}}(M,N) \cong 0$.
\end{enumerate}
\end{prop}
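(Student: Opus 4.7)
My plan is to verify the six claims in the order $(i)$, $(ii)$, $(iv)$, $(iii)$, $(v)$, $(vi)$, since the inductive arguments for the symmetry in $(iii)$ and the consequences $(v)$, $(vi)$ depend on the long exact sequence $(ii)$ and on flat objects being $\Tor$-acyclic, which is part of $(iv)$. Part $(i)$ is immediate from right exactness of the tensor product: the tail $F(M)_1 \otimes N \to F(M)_0 \otimes N \to M \otimes N \to 0$ is exact, so $H_0(F(M)_\bullet \otimes N) \cong M\otimes N$, naturally in both variables. For $(ii)$, flatness of each $F(M)_i$ ensures that tensoring the short exact sequence $0 \to A \to B \to C \to 0$ with $F(M)_\bullet$ produces a short exact sequence of chain complexes, whose associated long exact homology sequence is precisely the statement.

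For the ``only if'' direction of $(iv)$, I apply Lemma~\ref{lemma:independece_of_flat_resolution} to $F(M)_\bullet$ and to the trivial flat resolution of $M$ concentrated in degree zero. This produces a flat resolution $P_\bullet$ with quasi-isomorphisms to both complexes which survive $- \otimes C$, forcing $\Tor_n^{\ca{C}}(M,C) \cong H_n(M\otimes C)=0$ for $n\geq 1$. The converse follows from $(ii)$: given $\Tor_1^{\ca{C}}(M,-)\cong 0$, the long exact sequence reduces to exactness of $0 \to M\otimes A \to M\otimes B \to M\otimes C \to 0$ for every short exact sequence, so $M\otimes -$ is exact. The first half of $(iii)$ is proved by the same comparison argument: given any flat resolution $M_\bullet$, Lemma~\ref{lemma:independece_of_flat_resolution} produces a resolution $P_\bullet$ mapping to both $M_\bullet$ and $F(M)_\bullet$ by tensor-preserving quasi-isomorphisms, so $\Tor_n^{\ca{C}}(M,N) \cong H_n(M_\bullet \otimes N)$.

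The symmetry assertion in $(iii)$ is the main obstacle. I plan to deduce it from the double complex $T_{p,q} = M_p \otimes N_q$ with total differential, where $M_\bullet$ and $N_\bullet$ are flat resolutions of $M$ and $N$. Each row $M_p \otimes N_\bullet$ is a flat resolution of $M_p \otimes N$ (since $M_p$ is flat), and each column $M_\bullet \otimes N_q$ is a flat resolution of $M \otimes N_q$ (since $N_q$ is flat). A standard argument, either via the two spectral sequences of the double complex or, more elementarily, by an iterated dimension-shifting using the truncation $\ldots \to N_2 \to N_1 \to \ker(N_0 \to N)$ together with $(iv)$ to kill $\Tor_{\geq 1}(M, N_0)$, identifies $H_n(\mathrm{Tot}\, T)$ with both $H_n(M_\bullet \otimes N)$ and $H_n(M \otimes N_\bullet)$, yielding the desired symmetry.

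Finally, $(v)$ is immediate from $(iv)$ and $(ii)$: since $C$ is flat we have $\Tor_1^{\ca{C}}(L,C) \cong 0$, so the long exact sequence collapses to exactness of $0 \to L\otimes A \to L\otimes B \to L\otimes C \to 0$. For $(vi)$, choose a flat resolution $N_\bullet$ of $N$ and use the presentation $\Tor_n^{\ca{C}}(-,N) \cong H_n(- \otimes N_\bullet)$ from $(iii)$: since $\ca{C}$ is Grothendieck abelian, filtered colimits commute with both $- \otimes N_\bullet$ and with the homology of complexes, so $\Tor_n^{\ca{C}}(\colim_i M_i, N) \cong \colim_i \Tor_n^{\ca{C}}(M_i, N) \cong 0$.
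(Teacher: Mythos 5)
Your proposal is correct and follows essentially the same route as the paper: (i) from right exactness, (ii) as the long exact homology sequence of the tensored short exact sequence of complexes, independence of the flat resolution via Lemma~\ref{lemma:independece_of_flat_resolution}, symmetry in (iii) via the double complex $M_\bullet \otimes N_\bullet$ and its spectral sequences, and (iv)--(vi) as formal consequences. The only differences are cosmetic (a reordering of (iii) and (iv), and an optional dimension-shifting alternative to the spectral sequence), so no further comment is needed.
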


\begin{proof}
 The isomorphism $\Tor_0^{\ca{C}}(M,N) \cong M\otimes N$ follows from the fact that $-\otimes N$ is right exact. 

 The long exact sequence of (ii) is simply the long exact homology sequence of the exact sequence
\[
 \xymatrix{0 \ar[r] & F(M)_{\bullet} \otimes A \ar[r] & F(M)_{\bullet} \otimes B \ar[r] & F(M)_{\bullet} \otimes B \ar[r] & 0 }
\]
 of complexes in $\ca{C}$.

 The isomorphism $\Tor_n^{\ca{C}}(M,N) \cong H_n(M_\bullet \otimes N)$ follows directly from Lemma~\ref{lemma:independece_of_flat_resolution}, while the isomorphism $H_n(M_{\bullet} \otimes N) \cong H_n(M \otimes N_{\bullet})$ follows as usual from the spectral sequence associated to the double complex $M_{\bullet} \otimes N_{\bullet}$. This proves (iii).

 To see that (iv) holds, note that (iii) implies that $\Tor_1^{\ca{C}}(M,C)\cong 0$ if $M$ is flat. Conversely, if $\Tor_1^{\ca{C}}(M,C)\cong 0$ for all $C \in \ca{C}$, then the long exact Tor-sequence (ii) implies that $M$ is flat. 

 The symmetry (iii) of $Tor_n^{\ca{C}}$ implies that $\Tor_1^{\ca{C}}(L,C) \cong 0$ if $C$ is flat. The claim (v) therefore follows from the long exact Tor-sequence (ii).

 Finally, the fact (vi) about filtered colimits follows from (iii) and the fact that filtered colimits in $\ca{C}$ are exact.
\end{proof}

 Having established these basic facts we are now ready to prove the global versions of \cite[Lemme~2 and 3]{BEAUVILLE_LASZLO}.

\subsection{Global verions of the remaining lemmas}
 The first lemma concerns base change functors $(-)_B \colon \ca{C} \rightarrow \ca{C}_B$ for algebras $B$ which detect non-zero objects.

\begin{lemma}\label{lemma:almost_faithful_algebra}
 Let $\ca{C}$ be a lfp abelian tensor category with enough flat objects and let $B \in \ca{C}$ be a commutative algebra which detects non-zero objects: for all $M \in \ca{C}$, the implication
\[
 M_B \cong 0 \implies M \cong 0
\]
 holds. Then the following hold:
\begin{enumerate}
 \item[(i)] If $p_B \colon M_B \rightarrow N_B$ is an epimorphism, then so is $p$.
\item[(ii)] If $M_B \in \ca{C}_B$ is finitely generated, then so is $M \in \ca{C}$.
\item[(iii)] If $M \in \ca{C}$ is flat and $M_B \in \ca{C}_B$ is finitely presentable, then $M$ is finitely presentable.  
\end{enumerate}
\end{lemma}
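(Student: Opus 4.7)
The three parts have a layered structure, with (i) feeding into (ii) and (ii) into (iii), so I would prove them in that order.

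For part (i), the argument is immediate: since $(-)_B$ is a left adjoint, it preserves cokernels, so if $C$ is the cokernel of $p$ then $C_B$ is the cokernel of $p_B$. The latter is zero by assumption, hence $C \cong 0$ by the detection hypothesis, and $p$ is an epimorphism.

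For part (ii), I would write $M$ as the directed union of its finitely generated subobjects $\iota_i \colon M_i \hookrightarrow M$ in the Grothendieck abelian category $\ca{C}$. Since base change is cocontinuous we get $M_B \cong \colim_i (M_i)_B$, and since $\ca{C}_B$ is again Grothendieck abelian with exact filtered colimits, the images $N_i \defl \mathrm{im}\bigl((M_i)_B \to M_B\bigr)$ form a directed system of subobjects of $M_B$ whose union is $M_B$. The finite generation of $M_B$ then forces $N_j = M_B$ for some $j$, so that $(\iota_j)_B$ is an epimorphism. Part (i) lifts this to an epimorphism $\iota_j \colon M_j \to M$, but $\iota_j$ was also monic, so $M_j = M$ and $M$ is finitely generated.

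For part (iii), apply (ii) first to obtain a finitely presentable $F \in \ca{C}$ together with an epimorphism $F \to M$; let $K$ be its kernel. The key input is that $M$ is flat, so Proposition~\ref{prop:tor_properties}(v) guarantees that
\[
 \xymatrix{0 \ar[r] & K_B \ar[r] & F_B \ar[r] & M_B \ar[r] & 0}
\]
remains exact in $\ca{C}_B$. Now $F_B$ is finitely presentable in $\ca{C}_B$ (because the forgetful functor $U \colon \ca{C}_B \to \ca{C}$ creates filtered colimits, so its left adjoint $(-)_B$ preserves finitely presentable objects), and $M_B$ is finitely presentable by hypothesis; standard lfp-yoga then makes $K_B$ finitely generated. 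One final application of (ii) promotes this to $K$ finitely generated in $\ca{C}$, which together with $F$ finitely presentable gives the desired finite presentability of $M$.

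The only step with any real content is part (ii): part (i) is a one-line adjunction argument, and part (iii) is a Schanuel-style manipulation once one knows that flatness preserves the presenting exact sequence and that base change preserves finitely presentable objects. In (ii) the subtlety is that $B$ need not be flat, so $(M_i)_B \to M_B$ need not be monic and one has to pass to images in order to exploit finite generation; the whole argument rests on the interplay between filtered colimits and finite generation in Grothendieck abelian categories.
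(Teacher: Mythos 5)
Your proposal is correct and follows essentially the same route as the paper's proof: (i) via the cokernel and the detection hypothesis, (ii) via the directed union of finitely generated subobjects with the same care about passing to images because $B$ need not be flat, and (iii) via the kernel of a presentation, Proposition~\ref{prop:tor_properties}(v), and a second application of (ii). The only cosmetic difference is that the paper cites an external lemma for the finite generation of $K_B$ where you invoke standard lfp arguments, but the substance is identical.
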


\begin{proof}
 Part (i) follows since the cokernel $K$ of $p$ satisfies $K_B \cong 0$.

 To see that (ii) holds, write $M$ as directed union of finitely generated subobjects $M_i$. Then $M_B \cong \colim (M_i)_B$. Note that the morphisms $(M_i)_B \rightarrow M_B$ need not be monic since we did not assume that $B$ is flat. However, $M_B$ is the union of the \emph{images} of these morphisms. Since $M_B$ is finitely generated, it must be equal to one of these images, that is, $(M_i)_B \rightarrow M_B$ is an epimorphism for some index $i$. From (i) it follows that $M_i \rightarrow M$ is an epimorphism, hence an isomorphism. Thus $M\cong M_i$ is finitely generated.

 It remains to check that if $M$ is flat and $M_B$ is finitely presentable, then $M$ is finitely presentable. As we just observed above, the assumption implies that $M$ is finitely generated, so there exists an epimorphism $p \colon M^{\prime } \rightarrow M$ for some finitely presentable object $M^{\prime}$. Let $K$ be the kernel of $p$. To show the claim it suffices to check that $K$ is finitely generated. Since $M$ is flat, the sequence
\[
 \xymatrix{0 \ar[r] & K_B \ar[r] & M^{\prime}_B \ar[r] & M_B \ar[r] & 0}
\]
 is exact by Part~(v) of Proposition~\ref{prop:tor_properties}. Thus $K_B$ is finitely generated (see for example \cite[Lemma~2.1]{SCHAEPPI_INDABELIAN}), and from (ii) it follows that $K$ is finitely generated, as claimed.
\end{proof}

 The final lemma proved in \cite{BEAUVILLE_LASZLO} concerns the cokernel
\[
 \xymatrix{\U \ar[r]^-{\eta_f} & \U_f \ar[r]^-{\pi} & \U_f \slash \U}
\]
 of $\eta_f$. By construction of $\U_f$ (see \S \ref{section:beauville_laszlo_constructions}), this cokernel is isomorphic to the filtered colimit of the diagram
\[
 \xymatrix{0 \ar[r] & K_1 \ar[r] & K_2 \ar[r] & \ldots }
\]
 where $K_i$ denotes the cokernel of $(f^{\vee})^{\otimes n} \colon \U \rightarrow (I^{\vee})^{\otimes n}$.

\begin{lemma}\label{lemma:k_n_properties}
 Let $\ca{C}$ be a lfp abelian tensor category and let $f \colon I \rightarrow \U $ be an effective Cartier divisor in $\ca{C}$. Then the objects $K_n$ defined above have the following properties:
\begin{enumerate}
 \item[(i)] There are canonical isomorphisms $I^{\otimes n} \otimes K_n \cong \U \slash I^n$.
\item[(ii)] For each $n \in \mathbb{N}$, the morphisms
\[
 p_n \otimes K_n \colon K_n \rightarrow \U \slash I^n \otimes K_n \quad\text{and}\quad \widehat{\eta} \otimes K_n \colon K_n \rightarrow \widehat{\U} \otimes K_n
\]
 are isomorphisms. Thus the filtered colimit $\widehat{\eta} \otimes (\U_f \slash \U)$ of the morphisms $\widehat{\eta} \otimes K_n$ is an isomorphism as well.
\end{enumerate}
\end{lemma}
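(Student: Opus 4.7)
The plan is to deduce (ii) from (i), and to deduce (i) by tensoring the defining right exact sequence of $K_n$ with the invertible object $I^{\otimes n}$. The key point throughout is that $I$ is invertible, so $I^{\otimes n} \otimes -$ is exact and $I^{\otimes n} \otimes (I^{\vee})^{\otimes n} \cong \U$ via evaluation.

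For (i), I would start from the cokernel presentation
\[
\xymatrix{\U \ar[r]^-{(f^{\vee})^{\otimes n}} & (I^{\vee})^{\otimes n} \ar[r]^-{q_n} & K_n \ar[r] & 0}
\]
and apply the exact functor $I^{\otimes n} \otimes -$. Using the evaluation isomorphism $I^{\otimes n} \otimes (I^{\vee})^{\otimes n} \cong \U$, the standard duality triangle identities identify $I^{\otimes n} \otimes (f^{\vee})^{\otimes n}$, up to this iso and the unit isomorphism, with $f^{\otimes n} \colon I^{\otimes n} \to \U$. Therefore $I^{\otimes n} \otimes K_n$ is canonically isomorphic to the cokernel of $f^{\otimes n}$, which by definition is $\U \slash I^n$.

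For (ii), the decisive step is to establish that $f^{\otimes n} \otimes K_n = 0$. I would argue this directly via the naturality square
\[
\xymatrix{
I^{\otimes n} \otimes (I^{\vee})^{\otimes n} \ar[r]^-{I^{\otimes n} \otimes q_n} \ar[d]_{f^{\otimes n} \otimes (I^{\vee})^{\otimes n}} & I^{\otimes n} \otimes K_n \ar[d]^{f^{\otimes n} \otimes K_n} \\
(I^{\vee})^{\otimes n} \ar[r]_-{q_n} & K_n
}
\]
where the left vertical arrow, after identifying $I^{\otimes n} \otimes (I^{\vee})^{\otimes n}$ with $\U$ via evaluation, becomes $(f^{\vee})^{\otimes n}$ by duality. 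Since $q_n \circ (f^{\vee})^{\otimes n} = 0$, the clockwise composite vanishes, and because $I^{\otimes n} \otimes q_n$ is an epimorphism (tensor product is right exact), this forces $f^{\otimes n} \otimes K_n = 0$. Now tensoring the defining right exact sequence
\[
\xymatrix{I^{\otimes n} \ar[r]^-{f^{\otimes n}} & \U \ar[r]^-{p_n} & \U \slash I^n \ar[r] & 0}
\]
with $K_n$ presents $p_n \otimes K_n$ as the cokernel of the zero morphism $f^{\otimes n} \otimes K_n$, which is therefore an isomorphism. Moreover Lemma~\ref{lemma:properties_of_o_hat}(iii) applied to $M = K_n$ (using that $f^{\otimes n} \otimes K_n = 0$) yields that $\widehat{\eta} \otimes K_n$ is an isomorphism.

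Finally, the last assertion follows from functoriality of the filtered colimit presentation $\U_f \slash \U \cong \colim_n K_n$ given just before the statement: since tensor products commute with colimits, $\widehat{\eta} \otimes (\U_f \slash \U)$ is the filtered colimit of the isomorphisms $\widehat{\eta} \otimes K_n$, hence itself an isomorphism. I do not anticipate any substantial obstacle; the main thing to be careful about is the bookkeeping with the duality isomorphisms identifying $I^{\otimes n} \otimes (f^{\vee})^{\otimes n}$ with $f^{\otimes n}$, but this is entirely formal since $I$ is a line bundle.
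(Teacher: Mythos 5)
Your proof is correct and follows essentially the same route as the paper: part (i) is obtained exactly as in the paper's proof by tensoring the cokernel presentation of $K_n$ with the invertible object $I^{\otimes n}$ and identifying $I^{\otimes n}\otimes (f^{\vee})^{\otimes n}$ with $f^{\otimes n}$ via the evaluation isomorphism. The only (harmless) difference is in (ii): the paper transports the corresponding properties of $\U \slash I^n$ across the isomorphism of (i), citing Part~(ii) of Lemma~\ref{lemma:properties_of_o_hat}, whereas you first establish $f^{\otimes n}\otimes K_n=0$ and then invoke Part~(iii) of that lemma --- since Part~(iii) is itself deduced from Part~(ii), the two arguments are equivalent in substance.
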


\begin{proof}
 From the commutative square in the diagram
\[
\xymatrix@C=40pt{ I^n \ar[r]^-{I^n \otimes (f^{\vee})^{\otimes n}} \ar@{=}[d] & I^n \otimes (I^{\vee})^n  \ar[d]^{\ev}_{\cong} \ar[r] & I^n \otimes K_n \ar@{-->}[d]^{\cong} \\ 
I^n \ar[r]_-{f^{\otimes n}} & \U \ar[r]_-{p_n} & \U \slash I^n } 
\]
 of short exact sequences we deduce that $I^n \otimes K_n \cong \U \slash I^n$. The second claim therefore follows by tensoring with the invertible object $I^n$ and by using the fact that $\U \slash I^n$ has the corresponding properties: the first isomorphism is immediate from the definition of $p_n$, and the second follows from Part~(ii) of Lemma~\ref{lemma:properties_of_o_hat}.
\end{proof}

 The following is a global version of \cite[Lemme~3]{BEAUVILLE_LASZLO}.

\begin{lemma}\label{lemma:tor_computations}
 Let $\ca{C}$ be a lfp abelian tensor category with enough flat objects and let $f \colon I \rightarrow \U$ be an effective Cartier divisor in $\ca{C}$. Let $\U_f$ and $\widehat{\U}$ be the localization at $f$ and the completion of $\U$ at $I$ respectively. Let $\U_f \slash \U$ denote the cokernel of the unit $\eta_f$ of $\U_f$.
\begin{enumerate}
 \item[(i)] For every $M \in \ca{C}$ we have $\Tor_1^{\ca{C}}\bigl(\widehat{\U},(\U_f \slash \U) \otimes M\bigr) \cong 0$.
\item[(ii)] If $M$ is a flat $\U \slash I^n$-module, then
\[
 \Tor_i^{\ca{C}}(C,M) \cong 0
\]
 for all $C \in \ca{C}$ and all $i \geq 2$.
\item[(iii)] If $G$ is a flat $\widehat{\U}$-module, then
\[
 \Tor_i^{\ca{C}}\bigl(C,(\U_f \slash \U) \otimes G \bigr) \cong 0
\]
 for all $C \in \ca{C}$ and all $i \geq 2$.
\end{enumerate}
\end{lemma}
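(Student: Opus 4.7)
All three parts reduce to Tor-computations involving $\U \slash I^n$, via the length-one flat resolution
\[
 0 \to I^{\otimes n} \xrightarrow{f^{\otimes n}} \U \to \U \slash I^n \to 0
\]
in $\ca{C}$ (exact because $f$ is monic and $I^{\otimes n}$ is invertible). Consequently $\U \slash I^n$ has $\ca{C}$-flat dimension at most $1$, so $\Tor^{\ca{C}}_i(-, \U \slash I^n) = 0$ for $i \geq 2$; and by Lemma~\ref{lemma:properties_of_o_hat}(i) the morphism $\widehat{\U} \otimes f^{\otimes n}$ is monic, which upgrades this to $\Tor^{\ca{C}}_i(\widehat{\U}, \U \slash I^n) = 0$ for all $i \geq 1$. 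The bulk of the work lies in transferring these vanishings from $\U \slash I^n$ itself to objects built from it.

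For (ii), I would construct a length-one $\ca{C}$-flat resolution of $M$ by a bicomplex argument. Since $\ca{C}$ has enough flats, choose a flat $Q_0 \in \ca{C}$ with an epimorphism $Q_0 \to M$; then $\U \slash I^n \otimes Q_0 \to M$ is an epimorphism in $\ca{C}_{\U \slash I^n}$ whose kernel is again flat over $\U \slash I^n$ (by Lemma~\ref{lemma:exact_sequences_with_flat_objects} applied in that category, using flatness of $M$). Iterating produces a $\U \slash I^n$-flat resolution $\ldots \to \U \slash I^n \otimes Q_1 \to \U \slash I^n \otimes Q_0 \to M \to 0$ with each $Q_p$ flat in $\ca{C}$, and each module $\U \slash I^n \otimes Q_p$ carries the length-one $\ca{C}$-flat resolution obtained by tensoring the resolution of $\U \slash I^n$ with $Q_p$. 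The total complex of the resulting bicomplex is a flat resolution of $M$ in $\ca{C}$; comparing the two spectral sequences computing its homology after tensoring with $C$, the flatness of $M$ over $\U \slash I^n$ collapses everything in total degree $\geq 2$.

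For (iii), the description $\U_f \slash \U \cong \colim_n (I^\vee)^{\otimes n} \otimes \U \slash I^n$ (Lemma~\ref{lemma:k_n_properties}(i)), together with Proposition~\ref{prop:tor_properties}(vi) and the fact that invertible twists do not affect Tor, reduces the claim to $\Tor^{\ca{C}}_i(C, (\U \slash I^n) \otimes G) = 0$ for $i \geq 2$. The projection $\widehat{\U} \to \U \slash I^n$ coming from the limit, combined with the isomorphism $\widehat{\U} \otimes \U \slash I^n \cong \U \slash I^n$ of Lemma~\ref{lemma:properties_of_o_hat}(ii), identifies $(\U \slash I^n) \otimes G$ with the $\widehat{\U}$-base change $(\U \slash I^n) \otimes_{\widehat{\U}} G$; since $G$ is flat over $\widehat{\U}$, this is flat over $\U \slash I^n$, and (ii) applies.

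For (i), the same reduction yields the goal $\Tor^{\ca{C}}_1(\widehat{\U}, (\U \slash I^n) \otimes M) = 0$ for arbitrary $M$. Associativity of the derived tensor product and the vanishing $\Tor^{\ca{C}}_i(\widehat{\U}, \U \slash I^n) = 0$ for $i \geq 1$ produce a quasi-isomorphism
\[
 \widehat{\U} \otimes^L (\U \slash I^n) \otimes^L M \;\simeq\; \U \slash I^n \otimes^L M,
\]
a complex whose homology is concentrated in degrees $0$ and $1$. The decisive point is that $\Tor^{\ca{C}}_1(\U \slash I^n, M)$ naturally carries a $\U \slash I^n$-module structure (from the flat resolution of $\U \slash I^n$), hence is killed by $f^{\otimes n}$; Lemma~\ref{lemma:properties_of_o_hat}(iii) then ensures $\widehat{\eta} \otimes \Tor^{\ca{C}}_1(\U \slash I^n, M)$ is an isomorphism, and the edge-map analysis of the hyper-Tor spectral sequence $E_2^{p,q} = \Tor^{\ca{C}}_p(\widehat{\U}, \Tor^{\ca{C}}_q(\U \slash I^n, M))$ forces $\Tor^{\ca{C}}_1(\widehat{\U}, (\U \slash I^n) \otimes M) = 0$. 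The hard part will be making these derived-category and spectral-sequence manipulations rigorous, given that Tor has only been set up elementarily (Definition~\ref{dfn:tor_functors}): each step will need to be reformulated as an explicit double-complex computation built from the functorial flat resolutions of Lemma~\ref{lemma:independece_of_flat_resolution}.
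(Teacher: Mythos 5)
Your reductions for (i) and (iii) coincide with the paper's: pass to the colimit presentation $\U_f \slash \U \cong \colim_n K_n$, twist by the invertible object $I^{\otimes n}$, and, for (iii), identify $\U \slash I^n \otimes G$ with a base change of $G$ along $\widehat{\U} \rightarrow \widehat{\U} \otimes \U \slash I^n \cong \U \slash I^n$ so that flatness is inherited and (ii) applies. For (ii) the paper is more direct than your bicomplex: since $M \ten{\U \slash I^n} -$ is exact it commutes with homology, giving $\Tor_i^{\ca{C}}(M,C) \cong H_i\bigl(M \ten{\U\slash I^n} (\U\slash I^n \otimes C_{\bullet})\bigr) \cong M \ten{\U\slash I^n} \Tor_i^{\ca{C}}(\U\slash I^n, C)$, which vanishes for $i \geq 2$ by the two-term resolution. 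Your double-complex route also works, at the cost of an extra layer of spectral-sequence bookkeeping (including checking that the objects $\Tor_1^{\ca{C}}(C,\U\slash I^n)\otimes Q_p$ form a complex of $\U\slash I^n$-modules whose homology is a Tor over $\U\slash I^n$).

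The genuine problem is the final step of (i). From the facts you designate as decisive --- $\Tor_i^{\ca{C}}(\widehat{\U},\U\slash I^n)=0$ for $i\geq 1$, the isomorphism $E_2^{0,1}=\widehat{\U}\otimes\Tor_1^{\ca{C}}(\U\slash I^n,M)\cong\Tor_1^{\ca{C}}(\U\slash I^n,M)$, and the identification of the abutment with $\Tor_{p+q}^{\ca{C}}(\U\slash I^n,M)$ --- the spectral sequence only yields a short exact sequence $0\rightarrow Q\rightarrow\Tor_1^{\ca{C}}(\U\slash I^n,M)\rightarrow E_2^{1,0}\rightarrow 0$ in which $Q=E_\infty^{0,1}$ is a quotient of an object isomorphic to the middle term. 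In a general abelian category this does not force $E_2^{1,0}=\Tor_1^{\ca{C}}\bigl(\widehat{\U},(\U\slash I^n)\otimes M\bigr)$ to vanish: an object can be an extension of a nonzero object by a proper quotient of itself (e.g.\ a countably infinite direct sum), so ``edge-map analysis'' is not automatic. What actually closes the argument is a \emph{termwise} statement: every term $\U\slash I^n\otimes P_q$ of the complex computing $\U\slash I^n\otimes^{L}M$ is killed by $f^{\otimes n}$, so $\widehat{\eta}$ tensored with the whole complex is an isomorphism of complexes; hence the augmentation into $\widehat{\U}\otimes^{L}(\U\slash I^n\otimes^{L}M)$ is a quasi-isomorphism landing in filtration degree zero, which is what kills $E_\infty^{1,0}=E_2^{1,0}$. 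The paper sidesteps all of this with one step of a resolution: take a flat cover $F\twoheadrightarrow M$, note that the kernel $N$ of $\U\slash I^n\otimes F\rightarrow\U\slash I^n\otimes M$ satisfies $N\otimes f^{\otimes n}=0$, so $\widehat{\eta}\otimes N$ is an isomorphism by Part~(iii) of Lemma~\ref{lemma:properties_of_o_hat}; the long exact Tor-sequence then exhibits $\Tor_1^{\ca{C}}(\widehat{\U},\U\slash I^n\otimes M)$ as a quotient of $\Tor_1^{\ca{C}}(\widehat{\U},\U\slash I^n)\otimes F$, which is the kernel of the monomorphism $\widehat{\U}\otimes f^{\otimes n}$ tensored with $F$, hence zero. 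You should either adopt that one-step argument or supply the termwise identification of the edge map; the homology-level facts alone do not suffice.
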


\begin{proof}
 To see (i) it suffices to check that $\Tor^{\ca{C}}_1(\widehat{\U}, K_n \otimes M) \cong 0$ for all $n$. Moreover, since the invertible object $I^n$ is in particular flat, we can reduce this to checking that
\[
 \Tor_1^{\ca{C}}(\widehat{\U},K_n \otimes M) \otimes I^n \cong  \Tor_1^{\ca{C}}(\widehat{\U},I^n \otimes K_n \otimes M) \cong  \Tor_1^{\ca{C}}(\widehat{\U}, \U \slash I^n \otimes M)
\]
 is zero (see Part~(i) of Lemma~\ref{lemma:k_n_properties}).

 We can choose a flat object $F$ with an epimorphism $p \colon F\rightarrow M$. Let $N$ be the kernel of $\U \slash I^n \otimes p \colon \U \slash I^n \otimes F \rightarrow \U \slash I^n \otimes M$. Then $N \otimes f^{\otimes n}=0$, so the vertical morphisms in the commutative diagram
\[
 \xymatrix@C=35pt{0\ar[r] & N \ar[r] \ar[d]^{\widehat{\eta}\otimes N} & \U \slash I^n \otimes F \ar[d]^{\widehat{\eta} \otimes \U \slash I^n \otimes F} \ar[r]^-{\U \slash I^n \otimes p} & \U \slash I^n \otimes M \ar[d]^{\widehat{\eta} \otimes \U \slash I^n \otimes M} \ar[r] & 0\\ 
& \widehat{\U} \otimes N \ar[r] & \widehat{\U} \otimes \U \slash I^n \otimes F \ar[r]_-{\widehat{\U} \otimes \U \slash I^n \otimes p} & \widehat{\U} \otimes \U \slash I^n \otimes M}
\]
 are all isomorphisms (see Part~(iii) of Lemma~\ref{lemma:properties_of_o_hat}). From the long exact Tor-sequence it follows that there is an epimorphism
\[
  \Tor_1^{\ca{C}}(\widehat{\U},\U \slash I^n \otimes F) \rightarrow  \Tor_1^{\ca{C}}(\widehat{\U}, \U \slash I^n \otimes M) \smash{\rlap{,}}
\]
 which reduces the problem to the case where $M=F$ is flat. In this case we have an isomorphism
\[
\Tor_1^{\ca{C}}(\widehat{\U}, \U \slash I^n \otimes M) \cong \Tor_1^{\ca{C}}(\widehat{\U}, \U \slash I^n ) \otimes M \smash {\rlap{,}}
\]
 so it suffices to check that $\Tor_1^{\ca{C}}(\widehat{\U}, \U \slash I^n) \cong 0$. If we compute this using the flat resolution
\[
 \xymatrix{0 \ar[r] & I^n \ar[r]^-{f^{\otimes n}}& \U \ar[r] & \U \slash I^n} 
\]
 of $\U \slash I^n$, we find that $\Tor_1^{\ca{C}}(\widehat{\U}, \U \slash I^n)$ is the kernel of $\widehat{\U} \otimes f^{\otimes n}$. But $\widehat{\U}\otimes f$ is monic by Part~(i) of Lemma~\ref{lemma:properties_of_o_hat}. This concludes the proof that
\[
\Tor_1^{\ca{C}}\bigl(\widehat{\U}, (\U_f \slash \U )\otimes M\bigr) \cong 0 
\]
 for all $M \in \ca{C}$.

 To see (ii), let $M$ be a flat $\U \slash I^n$-module and let $C \in \ca{C}$ be an arbitray object with flat resolution $C_{\bullet}$. From the isomorphisms
\begin{align*}
 \Tor_i^{\ca{C}}(M,C) & \cong H_i(M \otimes C_{\bullet}) \\
 & \cong H_i \bigl( M \ten{\U \slash I^n} ( \U \slash I_n \otimes C_{\bullet})\bigr) \\
& \cong M \ten{\U \slash I^n} H_i(\U \slash I^n \otimes C_{\bullet}) \\
&\cong M \ten{\U \slash I^n} \Tor_i^{\ca{C}}(\U \slash I^n,C)
\end{align*}
 it follows that it suffices to check the claim for $M=\U \slash I^n$. In this case, the Tor-functor in question can be computed using the 2-term flat resolution
\[
 \xymatrix{0 \ar[r] & I^n \ar[r]^-{f^{\otimes n}}& \U \ar[r] & \U \slash I^n}  
\]
 of $\U \slash I^n$. This shows that $Tor_i^{\ca{C}}(\U \slash I^n ,C)$ is indeed zero for $i \geq 2$.

 It remains to check (iii): that for each flat $\widehat{\U}$-module $G$ we have
\[
 \Tor_i^{\ca{C}}\bigl(C,(\U_f \slash \U)\otimes G\bigr) \cong 0
\]
 for all $C \in \ca{C}$ and $i \geq 2$. By Part~(vi) of Proposition~\ref{prop:tor_properties} it suffices to check this for the objects $K_n$ instead of $\U_f \slash \U$, and by tensoring with the invertible object $I^n$ we find that this is equivalent to the claim that
\[
 \Tor_i^{\ca{C}}(C,\U \slash I^n \otimes G) \cong 0
\]
 for all $n \in \mathbb{N}$, $C \in \ca{C}$, and $i \geq 2$ (see Lemma~\ref{lemma:k_n_properties}). By (ii) it suffices to check that the object
\[
\U \slash I^n \otimes G \cong (\widehat{\U} \otimes \U \slash I^n) \ten{\widehat{\U}} G  
\]
 of $\ca{C}$ has the structure of a flat $\U \slash I^n$-module. Recall from Part~(ii) of Lemma~\ref{lemma:properties_of_o_hat} that the unit of $\widehat{\U}$ gives an isomorphism $\U \slash I^n \cong \widehat{\U} \otimes \U \slash I^n$ of algebras. This reduces the problem to checking that the image of $G$ under the tensor functor
\[
 (-)_{(\U \slash I^n)_{\widehat{\U}}} \colon \ca{C}_{\widehat{\U}} \rightarrow \ca{C}_{(\U \slash I^n)_{\widehat{\U}}}
\]
 is flat. This follows from the fact that for \emph{any} $\widehat{\U}$-algebra $B$, the base change functor $(-)_B \colon \ca{C}_{\widehat{\U}} \rightarrow \ca{C}_B$ preserves flat objects. Indeed, the natural isomorphism
\[
 U(G_B \ten{B} -) \cong G \ten{\widehat{\U}} U(-)
\]
 (where $U$ denotes the right adjoint of the base change functor) shows that $G_B$ is flat as a $B$-module if $G$ is flat as $\widehat{\U}$-module.
\end{proof}

\subsection{The Beauville--Laszlo theorem for tensor categories}
 
 Let $\ca{C}$ be a lfp abelian tensor category with enough flat objects. Let $f \colon I \rightarrow \U$ be an effective Cartier divisor in $\ca{C}$ (that is, $f$ is a monomorphism whose domain is an invertible object). As we have seen above, we can then form the localization $\U_f$ of $\U$ at $f$ and the completion $\widehat{\U} \defl \lim_n \U \slash I^n$ of $\U$ at $I$. We get a tensor functor
\[
 \ca{C}_{\widehat{\U}} \rightarrow  \ca{C}_{\U_f \otimes \widehat{\U}}
\]
 given by tensoring with $\U_f$ and a tensor functor
\[
 \ca{C}_{\U_f} \rightarrow  \ca{C}_{\widehat{\U} \otimes \U_f} \cong \ca{C}_{\U_f \otimes \widehat{\U}} 
\]
 given by tensoring with $\widehat{\U}$. The diagram
\begin{equation}\label{eqn:canonical_iso_in_base_change}
 \vcenter{\xymatrix{\ca{C} \xtwocell[1,1]{}\omit{^\sigma} \ar[r] \ar[d] & \ca{C}_{\widehat{\U}} \ar[d] \\
\ca{C}_{\U_f} \ar[r] & \ca{C}_{\U_f \otimes \widehat{\U}} }}
\end{equation}
 commutes up to natural isomorphism given by the symmetry
\[
 \sigma \otimes M \colon \widehat{\U} \otimes \U_f \otimes M \cong \U_f \otimes \widehat{\U} \otimes M
\]
 in the tensor category $\ca{C}$. Recall that an object $M \in \ca{C}$ is called \emph{$f$-regular} if $f \otimes M$ is a monomorphism.

 In order to simplify the notation we will omit the tensor symbol of $\ca{C}$ and simply write the tensor product of two objects $M$ and $N$ as the concatenation $MN$ throughout this section.

\begin{lemma}\label{lemma:beauville_laszlo_fully_faithful}
 Let $\ca{C}^{f-\reg}$ denote the full subcategory of $\ca{C}$ consisting of $f$-regular objects. The functor
\[
 \ca{C}^{f-\reg} \rightarrow \ca{C}_{\U_f} \pb{\ca{C}_{\U_f \widehat{\U}}} \ca{C}_{\widehat{\U}}
\]
 induced by the natural isomorphism \eqref{eqn:canonical_iso_in_base_change} is fully faithful. Moreover, $\widehat{\U} M$ is $f$-regular if $M$ is $f$-regular.
\end{lemma}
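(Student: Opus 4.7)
The plan is to deduce fully faithfulness from a Mayer--Vietoris style left exact sequence
\[
 0 \to N \to \U_f N \oplus \widehat{\U} N \to \U_f \widehat{\U} N
\]
in $\ca{C}$ (valid for every $f$-regular $N$), and to obtain this sequence as a byproduct of the argument showing that $\widehat{\U} N$ is $f$-regular.

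First I would prove the ``moreover'' clause. Since $M$ is $f$-regular, part~(v) of Lemma~\ref{lemma:properties_of_o_f} gives a short exact sequence
\[
 0 \to M \to \U_f M \to (\U_f/\U) M \to 0.
\]
Applying $\widehat{\U} \otimes -$ and using Lemma~\ref{lemma:tor_computations}(i) to kill the $\Tor_1$ term yields the short exact sequence $0 \to \widehat{\U} M \to \U_f \widehat{\U} M \to \widehat{\U}(\U_f/\U) M \to 0$; the first morphism is (up to the symmetry) $\eta_f \otimes \widehat{\U} M$, so Lemma~\ref{lemma:properties_of_o_f}(v) applied once more shows that $\widehat{\U} M$ is $f$-regular.

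Next I would stack the SES for $N$ with the SES obtained by applying the above argument with $M$ replaced by $N$, producing the commutative diagram
\[
\begin{array}{ccccccccc}
0 \to & N & \to & \U_f N & \to & (\U_f/\U) N & \to & 0 \\
& \downarrow & & \downarrow & & \downarrow \cong & & \\
0 \to & \widehat{\U} N & \to & \U_f \widehat{\U} N & \to & \widehat{\U}(\U_f/\U) N & \to & 0
\end{array}
\]
whose right vertical is an isomorphism by Lemma~\ref{lemma:k_n_properties}(ii). A direct diagram chase then shows the left square is a pullback in $\ca{C}$: given $T$ with compatible morphisms $\alpha \colon T \to \U_f N$ and $\beta \colon T \to \widehat{\U} N$, their further composites into $\widehat{\U}(\U_f/\U) N$ agree, but the one through $\beta$ vanishes because $\widehat{\U} N \to \U_f\widehat{\U} N \to \widehat{\U}(\U_f/\U) N$ is zero by the bottom SES; hence $\alpha$ factors through $\ker(\U_f N \to (\U_f/\U) N) = N$, and compatibility of the factorisation with $\beta$ is forced by the injectivity of $\widehat{\U} N \to \U_f \widehat{\U} N$.

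Fully faithfulness then follows by applying $\Hom_{\ca{C}}(M,-)$ to the resulting left exact sequence and using the base change adjunctions $\Hom_{\ca{C}_B}(BM,BN) \cong \Hom_{\ca{C}}(M,BN)$ for $B \in \{\U_f, \widehat{\U}, \U_f\widehat{\U}\}$: the resulting equaliser, with the two maps corresponding to the two routes into $\U_f\widehat{\U} N$ linked by the symmetry $\sigma$, is exactly the Hom-set in the iso-comma category $\ca{C}_{\U_f} \pb{\ca{C}_{\U_f \widehat{\U}}} \ca{C}_{\widehat{\U}}$. The chief technical hurdle is the Mayer--Vietoris diagram itself, whose rows depend on Lemma~\ref{lemma:tor_computations}(i) for exactness and whose rightmost iso depends on Lemma~\ref{lemma:k_n_properties}(ii); once these are in place, the diagram chase and the Hom-adjunction step are routine.
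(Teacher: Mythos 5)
Your proof is correct and rests on exactly the same ingredients as the paper's: the short exact sequence $0 \to N \to \U_f N \to (\U_f \slash \U)N \to 0$ coming from $f$-regularity via Part~(v) of Lemma~\ref{lemma:properties_of_o_f}, the isomorphism $(\U_f \slash \U)N \cong \widehat{\U}(\U_f \slash \U)N$ from Part~(ii) of Lemma~\ref{lemma:k_n_properties}, and the vanishing $\Tor_1^{\ca{C}}\bigl(\widehat{\U},(\U_f \slash \U)N\bigr) \cong 0$ from Part~(i) of Lemma~\ref{lemma:tor_computations}; your treatment of the ``moreover'' clause coincides with the paper's. The organization differs, though. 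The paper, given a compatible pair $(\varphi,\psi)$, constructs $\xi \colon M \to N$ by hand as the induced map on kernels of a morphism of short exact sequences, then verifies $\U_f \xi = \varphi$ using the reflection property of $(-)_{\U_f}$ (Part~(iv) of Lemma~\ref{lemma:properties_of_o_f}) and $\widehat{\U}\xi = \psi$ using monicity of $\widehat{\U}\eta_f N$. You instead isolate the intermediate statement that the square with vertices $N$, $\U_f N$, $\widehat{\U}N$, $\widehat{\U}\U_f N$ is a pullback in $\ca{C}$ (a Mayer--Vietoris property the paper never states explicitly) and then conclude formally by applying $\Hom_{\ca{C}}(M,-)$ together with the free--forgetful adjunctions for $\U_f$, $\widehat{\U}$, and $\U_f\widehat{\U}$. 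The two arguments are equivalent in content, and your diagram chase for the pullback property is sound. What your packaging buys is a reusable universal property and a purely formal finish; what it costs is the need to check that the bijection produced by the adjunctions really is $\xi \mapsto (\U_f\xi, \widehat{\U}\xi)$ and that the equalizer condition matches the compatibility condition in the iso-comma category through the symmetry $\sigma$ --- points you address only implicitly, but which do go through by naturality of the unit maps.
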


\begin{proof}
 Let $M$ and $N$ be $f$-regular and let $\varphi \colon \U_f M \rightarrow \U_f N$ and $\psi \colon \widehat{\U} M \rightarrow \widehat{\U} N$ constitute a morphism in the target, that is, assume that the diagram
\[
 \xymatrix{\widehat{\U} \U_f M \ar[r] \ar[d]_{\widehat{\U} \varphi} \ar[r]^-{\sigma M} & \U_f \widehat{\U} M \ar[d]^{\U_f \psi} \\
\widehat{\U} \U_f N \ar[r]_-{\sigma N} & \U_f \widehat{\U} N }
\]
 is commutative. We have to show that there exists a unique morphism $\xi \colon M \rightarrow N$ with $\U_f \xi=\varphi$ and $\widehat{\U} \xi=\psi$.

 Since $M$ is $f$-regular, the morphism $\eta_f \otimes M$ is a monomorphism (see Part~(v) of Lemma~\ref{lemma:properties_of_o_f}). Thus the sequence
\begin{equation}\label{eqn:f_regular_exact_sequence}
 \xymatrix{0 \ar[r] & M \ar[r]^-{\eta_f M} & \U_f M \ar[r]^-{\pi M} & (\U_f \slash \U) M \ar[r] & 0}
\end{equation}
 is exact.

 Recall from Part~(ii) of Lemma~\ref{lemma:k_n_properties} that
\[
(\U_f \slash \U) \widehat{\eta} M \colon (\U_f \slash \U)M \rightarrow (\U_f \slash \U) \widehat{\U} M 
\]
 is an isomorphism. From the commutative diagram
\[
 \xymatrix{\U_f M \ar@{=}[r] \ar[d]_{\widehat{\eta} M} & \U_f M \ar[r]^-{\pi M} \ar[d]^{\U_f \widehat{\eta} M} & (\U_f \slash \U)M \ar[d]^{(\U_f \slash \U) \widehat{\eta} M}_{\cong} \\
 \widehat{\U} \U_f M \ar[r]_-{\sigma M} & \U_f \widehat{\U} M \ar[r]_-{\pi \widehat{\U} M} & (\U_f \slash \U) \widehat{\U} M
 }
\]
 and the analogous diagram for $N$ we deduce that the solid arrow rectangle on the right of the diagram
\[
 \xymatrix@C=40pt{0 \ar[r] & M \ar@{-->}[d]_{\xi} \ar[r]^-{\eta_f M} & \U_f M \ar[d]^{\varphi} \ar[r]^-{\pi M} & (\U_f \slash \U) M \ar[r]_-{\cong}^-{(\U_f \slash \U) \widehat{\eta} M} & (\U_f \slash \U) \widehat{\U} M \ar[d]^{(\U_f \slash \U) \psi} \\ 
0 \ar[r] & N \ar[r]_-{\eta_f N} & \U_f N \ar[r]_-{\pi N} & (\U_f \slash \U) N \ar[r]^-{\cong}_-{(\U_f \slash \U) \widehat{\eta} N} & (\U_f \slash \U) \widehat{\U} N }
\]
 is commutative. Since both rows are exact, there exists a unique $\xi \colon M \rightarrow N$ making the whole diagram commutative. Moreover, since $\eta_f M \colon M \rightarrow \U_f M$ gives a reflection into the full subcategory of $\ca{C}$ of $f$-local objects (see Part~(iv) of Lemma~\ref{lemma:properties_of_o_f}), it follows that $\U_f \xi=\varphi$.

 To conclude the proof it only remains to check that $\widehat{\U} \xi=\psi$. By Part~(i) of Lemma~\ref{lemma:tor_computations}, the image
\[
 \xymatrix{0 \ar[r] & \widehat{\U} M \ar[r]^-{\widehat{\U} \eta_f M} & \widehat{\U} \U_f M \ar[r]^-{ \widehat{\U} \pi M} & \widehat{\U} (\U_f \slash \U) M \ar[r] & 0}
\]
 of the exact sequence \eqref{eqn:f_regular_exact_sequence} under the functor $\widehat{\U} \otimes -$ is exact for any $f$-regular module $M$. In particular, $\widehat{\U} \eta_f N$ is a monomorphism. The claim that $\widehat{\U} \xi=\psi$ therefore follows from the commutative diagram
\[
 \xymatrix{\widehat{\U} M \ar[r]^-{\widehat{\U} \eta_f M} \ar[d]_{\widehat{\U} \xi} & \widehat{\U} \U_f M \ar[r]^-{\sigma M} \ar[d]^{\widehat{\U} \varphi} & \U_f \widehat{\U} M \ar[d]^{\U_f \psi} \\ 
 \widehat{\U} N \ar[r]_-{\widehat{\U} \eta_f N} & \widehat{\U} \U_f N \ar[r]_-{\sigma N} & \U_f \widehat{\U} N }
\]
 in $\ca{C}$.
 
 The fact that $\eta_f \widehat{\U} M $ is monic also shows that $\widehat{\U} M$ is $f$-regular (see Part~(v) of Lemma~\ref{lemma:properties_of_o_f}).
\end{proof}

 With this in hand, we can prove the following version of the Beauville--Laszlo theorem for lfp abelian tensor categories.

\begin{thm}\label{thm:beauville_laszlo_for_tensor_cats}
 Let $\ca{C}$ be a lfp abelian tensor category with enough flat objects. Let $f \colon I \rightarrow \U$ be an effective Cartier divisor in $\ca{C}$. Then the natural isomorphism \eqref{eqn:canonical_iso_in_base_change} induces an equivalence
\[
 \ca{C}^{f-\reg} \rightarrow \ca{C}_{\U_f} \pb{\ca{C}_{\U_f \widehat{\U}}} \ca{C}_{\widehat{\U}}^{f_{\widehat{\U}}-\reg}
\]
 of categories. This equivalence restricts to equivalences of tensor categories
\[
 \ca{C}^{\flt} \rightarrow \ca{C}^{\flt}_{\U_f} \pb{\ca{C}^{\flt}_{\U_f \widehat{\U}}} \ca{C}_{\widehat{\U}}^{\flt} 
\]
 and
\[
  \ca{C}^{\flt, \fp} \rightarrow \ca{C}^{\flt, \fp}_{\U_f} \pb{\ca{C}^{\flt, \fp}_{\U_f \widehat{\U}}} \ca{C}_{\widehat{\U}}^{\flt, \fp}
\]
 between the subcategories of flat objects, respectively finitely presentable flat objects.
\end{thm}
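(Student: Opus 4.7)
The fully faithfulness of the first functor is established in Lemma~\ref{lemma:beauville_laszlo_fully_faithful}, so the plan is to prove essential surjectivity and then to check the restrictions to flat and finitely presentable flat objects. Throughout, I will closely follow the classical strategy of \cite{BEAUVILLE_LASZLO}, translating each step into the tensor-categorical framework developed in \S \ref{section:beauville_laszlo_constructions}.

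Given descent data $(L, \psi, N)$ with $L \in \ca{C}_{\U_f}$, $N \in \ca{C}_{\widehat{\U}}^{f_{\widehat{\U}}-\reg}$, and $\psi \colon \widehat{\U} L \xrightarrow{\cong} \U_f N$ the comparison induced by the symmetry of \eqref{eqn:canonical_iso_in_base_change}, I will construct the desired $f$-regular object $M \in \ca{C}$ as the pullback
\[
\xymatrix{M \ar[r] \ar[d] & N \ar[d]^-{\eta_f N} \\ L \ar[r]_-{\psi \circ \widehat{\eta} L} & \U_f N}
\]
in $\ca{C}$. The first step will be to show that this pullback fits into a short exact sequence
\[
0 \to M \to L \oplus N \to \U_f N \to 0,
\]
for which the key point is that the difference map hits all of $(\U_f / \U) \otimes N$ modulo the image of $\eta_f N$: this uses the isomorphism $(\U_f/\U)\widehat{\eta}$ of Part~(ii) of Lemma~\ref{lemma:k_n_properties} together with the vanishing $(\U_f/\U) \otimes L \cong 0$, which holds because $L$ is $f$-local. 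The second step is to show that the canonical morphisms $\U_f M \to L$ and $\widehat{\U} M \to N$ are isomorphisms. The first follows by applying the exact functor $\U_f \otimes -$ to the short exact sequence and simplifying using the idempotency of $\U_f$ (Part~(iii) of Lemma~\ref{lemma:properties_of_o_f}) together with the $f$-locality of $L$. The second requires applying $\widehat{\U} \otimes -$, and here the crucial input is the Tor-vanishing $\Tor_1^{\ca{C}}\bigl(\widehat{\U},(\U_f/\U) \otimes N\bigr) \cong 0$ from Part~(i) of Lemma~\ref{lemma:tor_computations}, combined with the $f_{\widehat{\U}}$-regularity of $N$ which ensures that $\widehat{\U}\eta_f N$ is a monomorphism. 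Finally, $f$-regularity of $M$ is automatic since $M$ embeds into $L \oplus N$, both summands of which are already $f$-regular.

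To restrict to flat objects I will show that if $L$ and $N$ are flat, then so is $M$. By Part~(iv) of Proposition~\ref{prop:tor_properties} this reduces to $\Tor_1^{\ca{C}}(M, -) \cong 0$, and by the joint detection of non-zero objects (Lemma~\ref{lemma:faithfulness_of_o_f_and_o_hat}) it suffices to check this vanishing after base change to $\U_f$ and to $\widehat{\U}$. These in turn follow from $\U_f M \cong L$ and $\widehat{\U} M \cong N$ being flat over their base algebras together with the flatness of $\U_f$ and a Tor argument for $\widehat{\U}$ using Part~(ii) of Lemma~\ref{lemma:tor_computations}. The symmetric monoidal structure is inherited because the right-hand pullback is already of tensor categories and our construction is natural. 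For the finitely presentable case, I will apply Part~(iii) of Lemma~\ref{lemma:almost_faithful_algebra} to the algebra $\U_f \oplus \widehat{\U}$, which detects non-zero objects by Lemma~\ref{lemma:faithfulness_of_o_f_and_o_hat} and whose base change of a flat $M$ equals $L \oplus N$.

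The main obstacle is the verification that $\widehat{\U} M \cong N$: because $\widehat{\U}$ is not flat, we cannot naively apply $\widehat{\U} \otimes -$ to the defining exact sequence of $M$, and this gap is precisely what the Tor-vanishing of Lemma~\ref{lemma:tor_computations} is designed to bridge. A secondary technical point will be to verify, via careful bookkeeping of natural transformations, that the induced isomorphisms $\U_f M \cong L$ and $\widehat{\U} M \cong N$ identify the comparison on the base changes of $M$ with the originally given $\psi$, so that the construction genuinely inverts the first functor up to isomorphism.
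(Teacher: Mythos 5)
Your essential-surjectivity argument is, up to repackaging, the paper's own: the pullback of $\eta_f N$ along $\psi \circ \widehat{\eta}L$ coincides with the kernel of the adjoint morphism $\overline{\alpha} \colon L \rightarrow (\U_f \slash \U) \otimes N$ used in the paper (because $N$ is $f$-regular, $\eta_f N$ is a monomorphism with cokernel $(\U_f \slash \U)\otimes N$), and the identifications $\U_f M \cong L$ (flatness of $\U_f$) and $\widehat{\U}M \cong N$ (Part~(i) of Lemma~\ref{lemma:tor_computations} plus $f$-regularity of $N$) follow the same route. One point to make explicit in the surjectivity step: the two facts you cite only show that $\U_f\overline{\alpha}$ and $\widehat{\U}\,\overline{\alpha}$ are epimorphisms; to conclude that $\overline{\alpha}$ itself is one you still need that $\U_f$ and $\widehat{\U}$ jointly detect epimorphisms, i.e.\ Lemma~\ref{lemma:faithfulness_of_o_f_and_o_hat} combined with Part~(i) of Lemma~\ref{lemma:almost_faithful_algebra}. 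The finitely presentable case via Part~(iii) of Lemma~\ref{lemma:almost_faithful_algebra} applied to $\U_f \times \widehat{\U}$ is also the paper's argument.

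The genuine gap is in the flatness step. Reducing $\Tor_1^{\ca{C}}(M,C)\cong 0$ to its vanishing after tensoring with $\U_f$ and with $\widehat{\U}$ is legitimate, and the $\U_f$ half works since $\U_f$ is flat, so $\U_f \otimes \Tor_1^{\ca{C}}(M,C) \cong \Tor_1^{\ca{C}}(L,C) \cong 0$. But the $\widehat{\U}$ half does not go through as described: $\widehat{\U}$ is \emph{not} flat, so $\widehat{\U}\otimes \Tor_1^{\ca{C}}(M,C)$ is not $\Tor_1^{\ca{C}}(\widehat{\U}\otimes M,C)$, the flatness of $N$ as a $\widehat{\U}$-module says nothing direct about this object, and Part~(ii) of Lemma~\ref{lemma:tor_computations} (which concerns flat $\U \slash I^n$-modules) does not bridge the two. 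Your Mayer--Vietoris sequence $0 \to M \to L\oplus N \to \U_f\otimes N \to 0$ is also the wrong tool here, since $N$ need not be flat as an object of $\ca{C}$ and $\Tor_1^{\ca{C}}(C,N)$ has no reason to vanish. The paper closes this with a direct argument you should adopt: apply the long exact Tor-sequence to $0 \to M \to L \to (\U_f\slash\U)\otimes N \to 0$ and use $\Tor_1^{\ca{C}}(C,L)\cong 0$ (a flat $\U_f$-module is flat in $\ca{C}$ because $\U_f$ is flat) together with $\Tor_2^{\ca{C}}\bigl(C,(\U_f\slash\U)\otimes N\bigr)\cong 0$, which is Part~(iii) --- not Part~(ii) --- of Lemma~\ref{lemma:tor_computations}, the statement engineered for precisely this step.
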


\begin{proof}
 First note that the restrictions are well-defined: any base change functor $(-)_B \colon \ca{C} \rightarrow \ca{C}_B$ preserves flat objects since $U(M_B \ten{B}-)\cong M\otimes U(-)$, where $U\colon \ca{C}_B \rightarrow \ca{C}$ denotes the forgetful functor. Since $U$ preserves filtered colimits, its left adjoint $(-)_B$ preserves finitely presentable objects. 

 Moreover, an object $G \in \ca{C}_{\widehat{\U}}$ is $f_{\widehat{\U}}$-regular if and only if its underlying object in $\ca{C}$ is $f$-regular. Indeed, this follows from the isomorphism $f_{\widehat{\U}} \ten{\widehat{\U}} G \cong f \otimes G$. The last statement of Lemma~\ref{lemma:beauville_laszlo_fully_faithful} therefore implies that the first functor is well-defined.

 From Lemma~\ref{lemma:beauville_laszlo_fully_faithful} we also know that all the functors in question are fully faithful, so it only remains to check that they are essentially surjective.

 Let $F \in \ca{C}_{\U_f}$ be arbitrary, let $G \in \ca{C}_{\widehat{\U}}$ be $f$-regular, and let $\alpha \colon \widehat{\U} F \rightarrow \U_f G$ be an isomorphism of $\U_f \widehat{\U}$-modules. We need to check that there exists an object $M \in \ca{C}$ together with an isomorphism $(\varphi,\psi) \colon (\U_f M,\sigma M, \widehat{\U}M) \rightarrow (F,\alpha,G)$. The idea is to construct the epimorphism in the exact sequence \eqref{eqn:f_regular_exact_sequence} and then to define $M$ to be its kernel.

 The epimorphism
\[
 \xymatrix{\widehat{\U} F \ar[r]^-{\alpha} & \U_f G \ar[r]^-{\pi G} & (\U_f \slash \U) G}
\]
 of $\widehat{\U}$-modules corresponds to a morphism $\overline{\alpha} \colon F \rightarrow (\U_f \slash \U) G$ under the adjunction $(-)_{\widehat{\U}} \dashv U \colon \ca{C} \rightarrow \ca{C}_{\widehat{\U}}$. Explicitly, the morphism $\overline{\alpha}$ is given by the composite
\[
 \xymatrix{F \ar[r]^-{\widehat{\eta}F} & \widehat{\U} F \ar[r]^-{\alpha} & \U_f G \ar[r]^-{\pi G} & (\U_f \slash \U) G }
\]
 in $\ca{C}$. It follows that the square
\[
 \xymatrix{ \widehat{\U} F \ar[d]_{\alpha}^{\cong} \ar[r]^-{\widehat{\U} \overline{\alpha}} & \widehat{\U} (\U_f \slash \U) G \ar[d]^{\widehat{\U}\text{-action}} \\
\U_f G \ar[r]_-{\pi G} & (\U_f \slash \U) G }
\]
 is commutative. Moreover, the action morphism in the above diagram is an isomorphism since $\widehat{\eta} (\U_f \slash \U) G$ is an isomorphism (see Part~(ii) of Lemma~\ref{lemma:k_n_properties}). This shows that $\widehat{\U} \overline{\alpha}$ is an epimorphism. Since $\U_f (\U_f \slash \U) \cong 0$, the morphism $\U_f \overline{\alpha}$ is tautologically an epimorphism. But $\U_f$ and $\widehat{\U}$ jointly detect epimorphisms by Lemma~\ref{lemma:faithfulness_of_o_f_and_o_hat} and Part~(i) of Lemma~\ref{lemma:almost_faithful_algebra}.

 Let $i \colon M \rightarrow F$ be the kernel of the morphism $\overline{\alpha} \colon F \rightarrow (\U_f \slash \U) G$ in $\ca{C}$. We claim that there exists an isomorphism
\[
 (\varphi,\psi) \colon (\U_f M, \sigma M,\widehat{M}) \rightarrow (F,\alpha,G)
\]
 in $\ca{C}_{\U_f} \ten{\ca{C}_{\U_f \widehat{\U}}} \ca{C}_{\widehat{\U}}$. Note that $M$ is certainly $f$-regular as subobject of the $f$-local object $F \in \ca{C}$.

 Applying $\U_f \otimes -$ to the exact sequence
\[
 \xymatrix{ 0 \ar[r] & M \ar[r]^-{i} & F \ar[r]^-{\overline{\alpha}} & (\U_f \slash \U) G \ar[r] & 0}
\]
 we find that $\U_f \otimes i$ is an isomorphism. We can compose it with the isomorphism $\U_f F \cong F$ (the action of $\U_f$ on $F$) to obtain an isomorphism $\varphi \colon \U_f M \rightarrow F$ of $\U_f$-modules.

 The bottom row in the commutative solid arrow diagram
\[
 \xymatrix{0 \ar[r] & \widehat{\U} M \ar@{-->}[d]_{\psi} \ar[r]^-{\widehat{\U} i} & \widehat{\U} F \ar[d]_{\cong}^{\alpha} \ar[r]^-{\widehat{\U} \overline{\alpha}} & \widehat{\U} (\U_f \slash \U) G \ar[d]^{\widehat{\U}\text{-action}}_{\cong} \ar[r] & 0 \\ 
 0 \ar[r] & G \ar[r]_-{\eta_f G} & \U_f G \ar[r]_-{\pi G} & (\U_f \slash \U) G \ar[r] & 0 }
\]
 is exact since $G$ is $f$-regular and the top row is exact since
\[
\Tor_1^{\ca{C}}\bigl(\widehat{\U},(\U_f \slash \U) G \bigr) \cong 0 
\]
 (see Part~(i) of Lemma~\ref{lemma:tor_computations}). Thus there exists a unique isomorphism $\psi$ of $\widehat{\U}$-modules making the above diagram commutative.

 From the definition of $\varphi$ and $\psi$ it follows that all the regions of the diagram
\[
 \xymatrix{
\widehat{\U} M \ar@{=}[rrr] \ar[dd]_{\eta_f \widehat{\U} M} \ar[rd]^{\widehat{\U} \eta_f M} & & & \widehat{\U} M \ar[ld]_{\widehat{\U} i} \ar[rd]^{\psi} \\
 & \widehat{\U} \U_f M \ar[ld]^{\sigma M} \ar[r]^-{\widehat{\U} \varphi} \ar@{}[rd]|{(\ast)} & \widehat{\U} F \ar[rd]_{\alpha} & & G \ar[ld]^{\eta_f G} \\
 \U_f \widehat{\U} M \ar[rrr]_-{\U_f \psi} & & & \U_f G
}
\]
 except possibly $(\ast)$ commute (including the outer pentagon). Thus $(\ast)$ commutes when precomposed with $\widehat{\U} \eta_f M$. But $(\ast)$ is a diagram of $\U_f$-modules whose initial vertex $\widehat{\U} \U_f M$ is a free $\U_f$-module. It follows that $(\ast)$ is commutative, hence that
\[
 (\varphi,\psi) \colon (\U_f M, \sigma M, \widehat{\U} M) \rightarrow (F,\alpha, G)
\]
 is indeed an isomorphism in $\ca{C}_{\U_f} \pb{\ca{C}_{\U_f \widehat{\U}}} \ca{C}_{\widehat{\U}}$. As we already observed, the object $M$ is $f$-regular, so this concludes the proof of the first claim.

 It remains to check that the kernel $M$ of $\overline{\alpha}$ is flat if $F$ and $G$ are flat as $\U_f$-module respectively $\widehat{\U}$-module, and that it is furthermore finitely presentable if both $F$ and $G$ are. The second claim follows from the first and Part~(iii) of Lemma~\ref{lemma:almost_faithful_algebra} (applied to the algebra $B=\U_f \times \widehat{\U}$).

 Suppose that $F$ is a flat $\U_f$-module and $G$ is a flat $\widehat{\U}$-module. Then $G$ is $f_{\widehat{\U}}$-regular by Part~(i) of Lemma~\ref{lemma:properties_of_o_hat}. Since $\U_f$ is flat, the natural isomorphism $F \otimes - \cong F \ten{\U_f } (\U_f \otimes -)$ shows that $F$ is flat as an object of $\ca{C}$. The long exact Tor-sequence arising from the exact sequence
\[
 \xymatrix{0 \ar[r] & M \ar[r]^{i} & F \ar[r]^-{\overline{\alpha}} & (\U_f \slash \U) G  \ar[r] & 0}
\]
 and an arbitrary object $C \in \ca{C}$ yields the exact sequence
\[
 \xymatrix{\Tor_2^{\ca{C}}\bigl(C, (\U_f \slash \U) G\bigr) \ar[r] & \Tor_1^{\ca{C}}(C,M) \ar[r] & \Tor_1^{\ca{C}}(C,F) } \smash{\rlap{,}}
\]
 and as we just observed we have $\Tor_1^{\ca{C}}(C,F)\cong 0$. From Part~(iii) of Lemma~\ref{lemma:tor_computations} we know that $\Tor_2^{\ca{C}}\bigl(C, (\U_f \slash \U) G\bigr) \cong 0$ as well. Thus we have $\Tor_1^{\ca{C}}(C,M) \cong 0$ for all $C \in \ca{C}$, which shows that $M$ is indeed flat.
\end{proof}

 Applying this to the category of quasi-coherent sheaves on a stack, we get the following corollary. Note that in order to get the desired basic properties (a lfp abelian tensor category), we need to make some assumptions on the stack $X$. If $X$ is \emph{algebraic} in the sense of Goerss and Hopkins \cite{GOERSS_ALGEBRAIC} ($X$ is quasi-compact with affine diagonal), then the category $\QCoh_{\fp}(X)$ is abelian and the finitely presentable quasi-coherent sheaves are closed under finite tensor products. Thus, for algebraic stacks in the sense of Goerss and Hopkins, $\QCoh(X)$ is a lfp abelian tensor category if and only if $\QCoh_{\fp}(X)$ is a generator.

\begin{cor}\label{cor:global_beauville_laszlo}
 Let $X$ be an algebraic stack and suppose that $\QCoh(X)$ is lfp and has enough flat objects. Let $Z \subseteq X$ be an effective Cartier divisor with sheaf of ideals $I \subseteq \U_X$. Let $\widehat{Z}=\Spec_X(\lim \U_X \slash I^n)$ and let $U=X \setminus Z$ be the open complement of $Z$. Then the canonical functors
\[
 \VB(X) \rightarrow \VB(U) \pb{\VB(U \pb{X} \widehat{Z} )} \VB(\widehat{Z})
\]
 and
\[
 \VB^c(X) \rightarrow \VB^c(U) \pb{\VB^c(U \pb{X} \widehat{Z} )} \VB^c(\widehat{Z}) 
\]
 are equivalences.
\end{cor}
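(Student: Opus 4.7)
The plan is to deduce the corollary from Theorem~\ref{thm:beauville_laszlo_for_tensor_cats} applied to $\ca{C} = \QCoh(X)$. The hypotheses on $X$ guarantee that $\ca{C}$ is a lfp abelian tensor category with enough flat objects, and the effective Cartier divisor $Z \subseteq X$ is by definition a subobject $f \colon I \to \U_X$ of the unit whose domain is invertible.

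The first step is a geometric translation. The algebra $\U_f$ constructed in \S\ref{section:beauville_laszlo_constructions} is, by its universal property (Part~(i) of Lemma~\ref{lemma:properties_of_o_f}), the structure algebra of the open complement $U = X \setminus Z$; in particular $U \to X$ is affine. Since $\widehat{Z} \to X$ is affine by construction, base change along these affine morphisms yields tensor equivalences
\[
\QCoh(X)_{\U_f} \simeq \QCoh(U), \qquad \QCoh(X)_{\widehat{\U}_X} \simeq \QCoh(\widehat{Z}), \qquad \QCoh(X)_{\U_f \otimes \widehat{\U}_X} \simeq \QCoh(U \pb{X} \widehat{Z}),
\]
and these identifications intertwine the base change functors of \eqref{eqn:canonical_iso_in_base_change} with the pullback functors of the square in the statement.

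Next I would match $\QCoh(-)^{\flt, \fp}$ with $\VB(-)$ on each of $X$, $U$, $\widehat{Z}$, and $U \pb{X} \widehat{Z}$. The inclusion $\VB \subseteq \QCoh^{\flt, \fp}$ is immediate, while for the converse I would invoke the classical fact that finitely presented flat modules over a commutative ring are locally free of finite rank, combined with fpqc descent to pass from an affine presentation to the stack. Every flat object is automatically $f$-regular (since $f$ is monic and tensoring with a flat object preserves monomorphisms), so the third equivalence of Theorem~\ref{thm:beauville_laszlo_for_tensor_cats} translates under these identifications into the first equivalence in the statement of the corollary.

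For the constant rank version, I would observe that every geometric point of $X$ either lies in $U$ or has image in $Z$, and every geometric point of $Z$ lifts to $\widehat{Z}$. Hence a vector bundle on $X$ has constant rank $d$ if and only if both of its restrictions to $U$ and $\widehat{Z}$ have constant rank $d$, and restricting the equivalence from the previous paragraph to the constant rank subcategories then yields the second claim. The main technical obstacle I anticipate is the identification of $\QCoh(-)^{\flt,\fp}$ with $\VB(-)$ on the auxiliary stacks $\widehat{Z}$ and $U \pb{X} \widehat{Z}$ (verifying that the requisite fpqc local criterion applies and that it is compatible with pullback); once this dictionary is in place, the corollary reduces to a direct translation of Theorem~\ref{thm:beauville_laszlo_for_tensor_cats}.
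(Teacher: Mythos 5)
Your argument for the first equivalence is essentially the paper's: apply Theorem~\ref{thm:beauville_laszlo_for_tensor_cats} to $\ca{C}=\QCoh(X)$, identify the three base-change categories with $\QCoh$ of $U$, $\widehat{Z}$, and $U\pb{X}\widehat{Z}$ via affineness of these morphisms over $X$ (the paper cites \cite[Proposition~4.5]{SCHAEPPI_GEOMETRIC} for the fibre product), and identify vector bundles with flat finitely presentable quasi-coherent sheaves by passing to an affine $\fpqc$-atlas. Where you genuinely diverge is the constant-rank refinement. The paper proves the stronger statement that constancy of rank of $\U_f\otimes V$ \emph{alone} forces constancy of rank of $V$: after choosing an atlas $\Spec(A)\rightarrow X$ trivializing $I$, a rank-jump would split off a nonzero projective summand $W$ with $W_f\cong 0$, i.e.\ $W$ is $f$-power torsion; but $W$ embeds in some $A^{\oplus n}$ on which the non-zero-divisor $f$ acts injectively, a contradiction. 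You instead argue pointwise using both restrictions: $|X|$ is covered by $|U|$ and the image of $|\widehat{Z}|$ (since $Z$ is a closed substack of $\widehat{Z}$), so if both restrictions have constant rank $d$ then so does $V$. This works, but it carries a small loose end your write-up does not address: essential surjectivity hands you bundles of constant ranks $d_1$ on $U$ and $d_2$ on $\widehat{Z}$, and your argument needs $d_1=d_2$, which follows from Lemma~\ref{lemma:rank_of_locally_free_unique} only when $U\pb{X}\widehat{Z}$ is non-trivial. The paper's argument sidesteps this entirely because it never uses the rank on $\widehat{Z}$; it also stays within the tensor-categorical framework (non-zero-divisor property of $f$) rather than appealing to geometric points of a stack that need not be Artin. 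If you keep your route, you should either dispose of the empty-overlap case or simply adopt the paper's one-sided criterion.
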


\begin{proof}
 Since any algebraic stack has an affine $\fpqc$-atlas $X_0 \rightarrow X$ for some affine scheme $X_0$, a quasi-coherent sheaf $M$ on $X$ is a vector bundle if and only if it is finitely presentable and flat. 

 Recall that $U=X \setminus Z$ is  isomorphic to $\Spec_X(\U_f)$ where $f \colon I \rightarrow \U_X$ denotes the inclusion. Thus both morphisms to $X$ are affine, so the pullback is also affine over $X$, with quasi-coherent sheaf of algebras given by the tensor product of $\U_f$ and $\widehat{\U}=\lim \U_X \slash I^n$ (see for example \cite[Proposition~4.5]{SCHAEPPI_GEOMETRIC}). The first claim therefore follows from Theorem~\ref{thm:beauville_laszlo_for_tensor_cats}, applied to the tensor category $\ca{C}=\QCoh(X)$.

 To see the second claim, it suffices to check that a vector bundle $V \in \VB(X)$ has constant rank if $\U_f \otimes V$ has constant rank. Since line bundles are locally free of rank one we can choose an atlas $p \colon X_0=\Spec(A) \rightarrow X$ with the property that $p^{\ast} I \cong A$. This reduces the problem to the affine case: we need to check that a finitely generated projective $A$-module $V$ has constant rank if $V_f$ has constant rank for some non-zero-divisor $f \in A$.

 This was already observed in the proof of Theorem~\ref{thm:tubular}: if $V$ did not have constant rank, we could write it as direct sum $V_0 \oplus V_1$ where the $V_i$ have different ranks. Then the localization of one of the $V_i$ at $f$ would be zero, contradicting the fact that $f$ is not a zero-divisor.
\end{proof}

 If $X$ is a quasi-compact semi-separated scheme\footnote{The quasi-compact and semi-separated schemes are precisely the algebraic stacks in the sense of Goerss and Hopkins which are also schemes.}, the category of quasi-coherent sheaves has enough flat objects by \cite[\S 1.2]{TARRIO_LOPEZ_LIPMAN}; see also the detailed notes in \cite[\S 2.4]{MURFET}. Thus the above corollary is applicable if the category of quasi-coherent sheaves is also locally finitely presentable.

 The corollary is also applicable if $X$ is an Adams stack (since objects with duals are both flat and finitely presentable). Combining this with Theorem~\ref{thm:pushout_recognition} we get the following result, which shows that the infinitesimal neighbourhood of an effective Cartier divisor behaves like a tubular neighbourhood (at least in the 2-category of Adams stacks).

\begin{thm}\label{thm:global_tubular}
 Let $X$ be an Adams stack over $R$ and let $Z \subseteq X$ an effective Cartier divisor with open complement $U =X \setminus Z$ and with sheaf of ideals $I \subseteq \U_X$. Let $\widehat{Z}=\Spec_X(\lim \U_X \slash I^n)$ be the infinitesimal neighbourhood of $Z$ in $X$. Then the bicategorical pullback square\footnote{This pullback square can be computed in the category of all stacks on the $\fpqc$-site $\Aff_R$, see \cite[Corollary~4.8]{SCHAEPPI_INDABELIAN}.}
\[
 \xymatrix{U \pb{X} \widehat{Z} \ar[r] \ar[d] & \widehat{Z} \ar[d] \\ U \ar[r] & X}
\]
 is a pushout square in the 2-category $\ca{AS}$ of Adams stacks.
\end{thm}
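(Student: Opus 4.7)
The plan is to apply Theorem~\ref{thm:pushout_recognition} to the given commutative square in $\ca{AS}$, so I need to verify its two conditions (i) and (ii) in the current setting. The key point is that since $X$ is an Adams stack, the category $\ca{C} = \QCoh(X)$ is a locally finitely presentable abelian tensor category with enough flat objects: the duals in $\QCoh_{\fp}(X)$ form a strong generating family of finitely presentable flat objects. Moreover, by the definition of $\widehat{Z}$ and the identification $U = \Spec_X(\U_f)$ with $f\colon I \to \U_X$ the inclusion, the two functors $\VB^c(X) \to \VB^c(U)$ and $\VB^c(X) \to \VB^c(\widehat{Z})$ are just tensoring with the algebras $\U_f$ and $\widehat{\U}$ respectively in $\ca{C}$, and similarly for the restrictions of arbitrary quasi-coherent sheaves.

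For condition (i) of Theorem~\ref{thm:pushout_recognition}, I would simply invoke Corollary~\ref{cor:global_beauville_laszlo}, which is precisely the statement that
\[
\VB^c(X) \longrightarrow \VB^c(U) \pb{\VB^c(U \pb{X} \widehat{Z})} \VB^c(\widehat{Z})
\]
is an equivalence of categories under our hypotheses on $X$.

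For condition (ii), I need to show that if $M \in \QCoh_{\fp}(X)$ satisfies $\U_f \otimes M \cong 0$ and $\widehat{\U} \otimes M \cong 0$, then $M \cong 0$. This is exactly the content of Lemma~\ref{lemma:faithfulness_of_o_f_and_o_hat}, which asserts that the algebras $\U_f$ and $\widehat{\U}$ jointly detect non-zero objects in any lfp abelian tensor category with enough flat objects, applied to $\ca{C} = \QCoh(X)$.

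Combining these two verifications, Theorem~\ref{thm:pushout_recognition} immediately gives that the square is a bicategorical pushout in $\ca{AS}$. There is no genuine obstacle here once the framework of \S\ref{section:beauville_laszlo} is in place; the work was already done in establishing the global Beauville--Laszlo equivalence (Theorem~\ref{thm:beauville_laszlo_for_tensor_cats} and its Corollary~\ref{cor:global_beauville_laszlo}) and the joint detection lemma. The only small point worth spelling out is the identification of the restriction functors $f^\ast$ and $g^\ast$ along $U \to X$ and $\widehat{Z} \to X$ with the base change functors $-\otimes \U_f$ and $-\otimes \widehat{\U}$ in $\QCoh(X)$, which follows from the fact that both morphisms are affine with the indicated quasi-coherent sheaves of algebras.
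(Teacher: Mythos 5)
Your proposal is correct and follows exactly the paper's own argument: apply Theorem~\ref{thm:pushout_recognition}, verify condition (i) via Corollary~\ref{cor:global_beauville_laszlo} and condition (ii) via Lemma~\ref{lemma:faithfulness_of_o_f_and_o_hat}. The extra remark identifying the pullback functors along the affine morphisms $U \to X$ and $\widehat{Z} \to X$ with base change along $\U_f$ and $\widehat{\U}$ is a reasonable elaboration that the paper leaves implicit.
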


\begin{proof}
 By Theorem~\ref{thm:pushout_recognition}, we need to check that
\[
 \VB^c(X) \rightarrow \VB^c(U) \pb{\VB^c(U \pb{X} \widehat{Z} )} \VB^c(\widehat{Z}) 
\]
 is an equivalence and that the two functors
\[
 \QCoh_{\fp}(X) \rightarrow \QCoh_{\fp}(U) \quad\text{and}\quad
 \QCoh_{\fp}(X) \rightarrow \QCoh_{\fp}(\widehat{Z})
\]
 jointly detect non-zero objects. The first claim is proved in Corollary~\ref{cor:global_beauville_laszlo} and the second is a consequence of Lemma~\ref{lemma:faithfulness_of_o_f_and_o_hat}.
\end{proof}

\bibliographystyle{amsalpha}
\bibliography{descent}

\end{document}